\newcommand{\arxiv}[1]{\href{http://arxiv.org/abs/#1}{\tt arXiv:\nolinkurl{#1}}}
\newcommand{\arXiv}[1]{\href{http://arxiv.org/abs/#1}{\tt arXiv:\nolinkurl{#1}}}
\newtheorem{theorem}{Theorem}[section]
\newtheorem{lemma}[theorem]{Lemma}
\newtheorem{definition}[theorem]{Definition}
\newtheorem{example}[theorem]{Example}
\newtheorem{proposition}[theorem]{Proposition}
\newtheorem{corollary}[theorem]{Corollary}
\theoremstyle{remark}
\newtheorem{remark}[theorem]{Remark}
\numberwithin{equation}{section}
\newcommand{\nc}{\newcommand}
\nc{\nconv}{\mathop{\mbox{\large $\odot$}}}
\nc{\snconv}{\mbox{\scriptsize$\odot$}}
\nc{\flags}{\mathcal{F}}
\nc{\K}{\mathcal{K}}
\nc{\T}{\mathcal{T}}
\nc{\KP}{\operatorname{KP}}
\def\aa{{\bf a}}
\def\ii{{\bf i}}
\nc{\re}{re}
\def\N{\mathbb{N}}
\def\Q{\mathbb{Q}}
\def\C{\mathcal{C}}
\def\R{\mathbb{R}}
\def\Z{\mathbb{Z}}
\def\kk{\mathbf{k}}
\def\Cx{\mathbb{C}}
\def\E{E}
\def\I{\mathcal{I}}
\def\B{\mathcal{B}}
\def\w{\omega}
\def\uj{\bf j}
\def\L{\mathcal{L}}
\def\A{\mathcal{A}}
\DeclareMathOperator{\hit}{ht}
\def\ex{{\bf{ex}}}
\def\n{\mathfrak {n}}
\nc{\co}{\nabla}
\newcommand{\myrightleftarrows}{\mathrel{\substack{\longrightarrow \\[-.6ex] \longleftarrow}}}
\def\a{\alpha}
\def\b{\beta}
\def\e{\epsilon}
\def\la{\lambda}
\def\La{\Lambda}
\def\ga{\gamma}
\def\d{\delta}
\def\de{\delta}
\def\th{\theta}
\def\f{\mathbf{f}}\def\k{\mathbf{k}}
\def\g{\mathfrak{g}}
\def\B{\mathbf{B}}
\def\M{\mathcal{M}}
\def\D{\mathbb{D}}
\def\U{U}
\def\J{\mathcal{J}}
\DeclareFontFamily{U}{wncy}{}
    \DeclareFontShape{U}{wncy}{m}{n}{<->wncyr10}{}
    \DeclareSymbolFont{mcy}{U}{wncy}{m}{n}
    \DeclareMathSymbol{\shuffle}{\mathord}{mcy}{"58}
\def\Hom{\operatorname{Hom}}
\def\Ind{\operatorname{Ind}}
\def\seq{\,\mbox{Seq}\,}
\def\End{\operatorname{End}}
\def\Seq{\,\mbox{Seq}\,}
\def\Res{\operatorname{Res}}
\def\id{\mbox{id}}
\newcommand{\map}[2]{\,{:}\,#1\!\longrightarrow\!#2}
\def\h{\mathfrak{h}} 
\def\inv{^{-1}}
\newcommand{\sodot}{\mathop{\mbox{\normalsize$\bigodot$}}\limits}
\numberwithin{equation}{section}
\title[Cluster monomials are dual canonical]{Cluster monomials are dual canonical}
\address{University of Melbourne, Parkville, VIC 3010, Australia}\email{maths@petermc.net}
\author{Peter J McNamara}
\date{\today}
\begin{document}

\begin{abstract}
Kang, Kashiwara, Kim and Oh have proved that cluster monomials lie in the dual canonical basis, under a symmetric type assumption. This involves constructing a monoidal categorification of a quantum cluster algebra using representations of KLR algebras. We use a folding technique to generalise their results to all Lie types.
\end{abstract}
\maketitle

\tableofcontents

\section{Introduction} 

Let $G$ be a Kac-Moody group, $w$ an element of its Weyl group. Then associated to $w$ and $G$ there is a unipotent subgroup $N(w)$, whose Lie algebra is spanned by the positive roots $\a$ such that $w\a$ is negative.
Arguably the most important case is when $G$ is finite dimensional and $w$ is the longest element in the Weyl group, when $N(w)$ is maximal unipotent.
The coordinate ring $\Cx[N(w)]$ has a natural structure of a cluster algebra \cite{glsq1}. This story generalises to the quantum setting \cite{gls,my}, where the quantised coordinate ring $A_q(\mathfrak{n}(w))$ has a quantum cluster algebra structure. In this paper, we work with the quantum version. However, for this introduction, we shall continue with the classical story.

Kashiwara \cite{globalbases} and Lusztig \cite{lusztigoneofthem,lusztig91} defined a remarkable basis of the enveloping algebra $U(\mathfrak{n})$, called the lower global base or the canonical basis. It's dual, the dual canonical basis, is a basis of $\Cx[N]$ and is of concern to us. This dual canonical basis further induces a basis of the cluster algebra $\Cx[N(w)]$. 

It was a long-standing conjecture that the cluster monomials in the cluster algebra structure on $\Cx[N(w)]$ lie in the dual canonical basis. This was recently proved whenever $G$ is symmetric by Kang, Kashiwara, Kim and Oh \cite{kkkocombined}. Their proof used the categorification of $A_q(\mathfrak{n}(w))$ by categories of modules over Khovanov-Lauda-Rouquier algebras (henceforth called KLR algebras). They find a monoidal categorification of the cluster algebra structure inside these categories of KLR modules.

In this paper we follow in the footsteps of Lusztig's approach to the canonical basis in non-symmetric types using the technique of folding by an automorphism of the Dynkin diagram. The corresponding theory of folding KLR algebras was recently developed in \cite{folding}. Inside these folded categories, we are able to fold the monoidal categorification of \cite{kkkocombined} to deduce the fact that the cluster monomials lie in the dual canonical basis, and more generally, that they lie in the dual $p$-canonical basis for all primes $p$. The $p$-canonical basis is the analogue of the canonical basis defined using KLR algebras in characteristic $p$.

Our main theorem that we prove in this paper is the following: 

\begin{theorem}
 The algebra $\A_q(\n(w))$ has the structure of a (explicitly defined, independent of $p$) quantum cluster algebra in which every cluster monomial lies in the dual $p$-canonical basis.
\end{theorem}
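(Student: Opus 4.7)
The plan is to reduce to the symmetric case of Kang--Kashiwara--Kim--Oh by folding. First I would realise the Kac--Moody data of $G$ and the Weyl group element $w$ as the fixed points of an admissible Dynkin diagram automorphism $\sigma$ acting on symmetric Kac--Moody data $(\G, \widetilde{w})$, with $\widetilde{w}$ a $\sigma$-invariant element of the Weyl group $\widetilde{W}$ of $\G$. Starting from any reduced expression of $w$, such a $\widetilde{w}$ is produced by replacing each simple reflection by the product of the commuting reflections in its $\sigma$-orbit, so that $\widetilde{w}$ admits a distinguished $\sigma$-invariant reduced expression $\widetilde{\ii}$. By \cite{kkkocombined}, the quantum algebra $\A_q(\widetilde{\n}(\widetilde{w}))$ has a monoidal categorification by a subcategory $\C(\widetilde{w})$ of KLR modules in which every cluster monomial is the class of a real simple module, hence lies in the dual canonical basis.

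Next I would bring folding into play using \cite{folding}. The automorphism $\sigma$ acts on the KLR algebra attached to $\G$ and on $\C(\widetilde{w})$, and the folding theory provides a functor relating a $\sigma$-equivariant version of $\C(\widetilde{w})$ to the category of modules over the folded KLR algebra, which categorifies $\A_q(\n(w))$. Under this identification the classes of $\sigma$-invariant real simple modules of $\C(\widetilde{w})$ descend to dual $p$-canonical basis elements of $\A_q(\n(w))$, the prime $p$ entering through the characteristic of the ground field over which the folded KLR algebra is defined. The initial seed of \cite{kkkocombined} coming from $\widetilde{\ii}$ is $\sigma$-invariant by construction, so it descends to an initial seed of an explicit quantum cluster algebra structure on $\A_q(\n(w))$, and I would identify this structure with the one predicted by \cite{gls,my}.

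The technical core is to verify that cluster mutation is compatible with folding: mutating simultaneously at a $\sigma$-orbit of mutable vertices should produce a new $\sigma$-invariant seed, and the resulting exchange relation should descend to a mutation of the folded seed with the appropriate skew-symmetrisable exchange matrix. Because the vertices in a single $\sigma$-orbit of the initial quiver correspond to pairwise commuting mutations in the symmetric cluster algebra, a simultaneous orbit mutation is just a composition of ordinary mutations of \cite{kkkocombined}, which keeps the mutated cluster variables in the class of real simples. Once this compatibility of exchange matrices is established, every cluster monomial of the folded cluster algebra lifts to a $\sigma$-invariant cluster monomial of $\A_q(\widetilde{\n}(\widetilde{w}))$, which is dual canonical by \cite{kkkocombined}, and the folding functor places its image in the dual $p$-canonical basis of $\A_q(\n(w))$. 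I expect the main obstacle to be precisely this compatibility at the level of exchange matrices through arbitrary mutation sequences, together with the bookkeeping of quantum scalars needed to pass from the skew-symmetric datum on the symmetric side to the skew-symmetrisable datum on the folded side; independence of $p$ for the cluster structure itself is then manifest, since the exchange matrix and the initial seed are defined combinatorially from $\widetilde{\ii}$ alone.
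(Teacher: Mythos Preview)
Your high-level strategy matches the paper's: realise $w\in W$ inside the symmetric Weyl group $W'$, invoke \cite{kkkocombined} there, and fold. But two points need correction. First, there is no ``folded KLR algebra'' whose module category you pass to; the categorification of $\A_q(\n)$ in symmetrisable type \emph{is} the $a$-equivariant category $\C_\nu$ of modules over the \emph{symmetric} KLR algebra, with a modified Grothendieck group in which traceless objects are killed and $(M,\zeta\sigma)$ is identified with $\zeta(M,\sigma)$. So the descent step is not a functor to another module category but rather the passage from the ordinary Grothendieck group of $R$-modules to this equivariant Grothendieck group.

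Second, and more seriously, you assert that $\sigma$-invariant real simples descend to dual $p$-canonical basis elements. This is only true up to a root of unity: an $a$-invariant simple $M$ determines an object $(M,\sigma)$ of $\C_\nu$ only after choosing $\sigma\colon a^*M\to M$ with $\sigma^n=\id$, and different choices scale the class by $n$-th roots of unity. The dual $p$-canonical basis consists of classes of \emph{self-dual} simple objects, which pins down $\sigma$ up to sign but not completely. The paper spends substantial effort (Lemma~\ref{lem:initial}, the theorem following it constructing compatible $\sigma_s,\sigma'_s$, Theorem~\ref{17.3}, and the argument of Theorem~\ref{reachable} using Lemma~\ref{product}) showing that one can choose the equivariant structures on all cluster modules consistently so that each class lands in $\B^*$ exactly, not merely in $\zeta\B^*$. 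This root-of-unity bookkeeping is the genuine technical core beyond ``apply folding to \cite{kkkocombined}'', and your proposal does not address it; the argument even appeals to irreducibility of classical cluster variables \cite{clusterprime} at $q=1$ to eliminate the ambiguity at the initial seed.
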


Our main contribution is the categorification of the cluster algebra structure via folding, together with its implication about the cluster monomials belonging to the dual $p$-canonical basis. The fact that $\A_q(\n(w))$ has the structure of a quantum cluster algebra was established in this generality by Goodearl and Yakimov \cite{my,gy2}.

While this paper was being written, Qin \cite{qin2020} gave an independent proof of Theorem 1.1 for the dual canonical basis by different methods. That theorem is proved here as the special case $p=0$.

We now briefly discuss the contents of this paper. We begin with an overview of the theory of 
generalised minors and bases of canonical type (which are closely related to perfect bases).

We then summarise the necessary background results about folded KLR algebras. These results are all proved in \cite{folding}.

Finally we discuss the work of Kang, Kashiwara, Kim and Oh, and show how to incorporate the diagram automorphism into their story. This is where we prove the main results of the paper. These results comprise the categorification of the cluster algebra structure on $\mathbb{C}[N(w)]$ in terms of folded KLR algebras, and have Theorem \ref{main} as their most straightforward and classical corollary.

The results of this paper prove that certain modular decomposition numbers for KLR algebras are trivial. In particular, they prove that the reduction modulo $p$ of any irreducible module which corresponds to a cluster monomial remains irreducible.
Such results also have some geometric consequences, implying the non-existence of torsion in the stalks and costalks of the intersection cohomology of certain (Lusztig) quiver varieties. For example, by \cite[Theorem 3.7]{geordie}, there is no such torsion in $A_4$, as in this case $\A_q(\n)$ is a finite type cluster algebra.

We thank B. Leclerc for useful discussions about the theory of cluster algebras.

\section{The quantum group}

Let $\g$ be a symmetrisable Kac-Moody Lie algebra and $I$ be a set indexing the simple roots.
To each $i\in I$ is an associated integer $d_i$ which is the entry of the symmetrising matrix.

We work over the ring $\Z[q,q\inv]$ or its fraction field $\Q(q)$. For each $i\in I$, let $q_i=q^{d_i}$. The quantum integer $[n]_i$ is $(q_i^n-q_i^{-n})/(q_i-q_i\inv)$. The quantum factorial $[n]_i!$ is the product $[n]_i[n-1]_i\cdots [1]_i$.

Let $U_q(\g)$ be the corresponding quantised enveloping algebra. Write $U_q(\n)$ for the upper-triangular part and let $\A_q(\n)$ be its graded dual. Let $\{\theta_i\}_{i\in I}$ denote the usual generating set of $U_q(\n)$ as an algebra, and use $\theta_i^*$ to denote the dual generating set of $\A_q(\n)$.
We work with the $\Z[q,q\inv]$-form of $U_q(\n)$ generated as an algebra by the divided powers $\th_i^{(n)}:=\th_i^n/[n]_i!$ and 
the corresponding $\Z[q,q\inv]$-form of $A_q(\n)$ which is its graded integral dual. When writing $U_q(\n)$ or $\A_q(\n)$, we will always refer to this integral form.

The bar involution on $\U_q(\n)$ is the automorphism fixing the Chevalley generators and sending $q$ to $q\inv$. This involution induces a bar involution on the dual $\A_q(\n)$. For homogeneous elements $a$ and $b$ of degrees $\a$ and $\b$ in $\A_q(\n)$, we have the formula \cite[Proposition 1]{leclerc1} \begin{equation}\label{barproduct}\overline{ab}=q^{\a\cdot\b}\bar{b}\bar{a}.
\end{equation}


Let $P$ be the weight lattice and let $P^+$ denote the set of dominant weights. If $i\in I$, let $\w_i$ be the corresponding fundamental weight. For each $\la\in P^+$, we denote by $V(\la)$ the irreducible highest weight $U_q(\g)$-module with highest weight $\la$. There is a partial order on $P$ where $\la \geq \mu$ if $\la-\mu$ is a sum of positive roots.

The algebra $\A_q(\n)$ is graded by $P^+$, we write $\A_q(\n)_\la$ for the $\la$'th graded piece. The graded components of the coproduct on $\A_q(\n)$ are denoted
\[
r_{\nu_1,\ldots,\nu_n}\map{\A_q(\n)_{\nu_1+\cdots+\nu_n}}{\A_q(\n)_{\nu_1}\otimes\cdots\otimes\A_q(\n)_{\nu_n}}.
\]

Write $E_i$ (the image of $\th_i$ under the inclusion of $U_q(\n)$ in $U_q(\g)$) and $F_i$ for the usual generators of $U_q(\g)$.
Let $\phi$ be the involutive antiautomorphism of $U_q(\g)$ sending $E_i$ to $F_i$. Again, we write $F_i^{(c)}:=F_i^c/[c]_i!$ for the divided power.

Let $(\cdot,\cdot)$ denote the $q$-Shapovalov form on $V(\la)$. This is a nondegenerate bilinear form such that
\[
 (xv,w)=(v,\phi(x)w)
\]
for all $x\in U_q(\g)$ and $v,w\in V(\la)$.
 We normalise the $q$-Shapovalov form so that $(v_\la,v_\la)=1$, where $v_\la$ is a chosen highest weight vector.

We now define a weight vector $v_\mu$ for all extremal weights $\mu$ in $V(\la)$.
Such an extremal weight $\mu$ is of the form $w\la$ for some $w$ in the Weyl group $W$. Let $w=s_{i_1}\cdots s_{i_n}$ be a reduced decomposition of $w$. Then we define
\[
 v_\mu = F_{i_1}^{(c_1)}\cdots F_{i_n}^{(c_n)} v_\la
\]
where the integers $c_k$ are defined by $c_k=(\a_{i_k},s_{i_{k+1}}\cdots s_{i_n}\la)$. The element $v_\mu$ does not depend on the choice of $w$ or the choice of reduced decomposition.

The $q$-Shapovalov form satisfies $(v_\mu,v_\mu)=1$ for all extremal weight vectors $v_\mu$. This is proved by a rank one computation.

The coproduct is denoted $r_{\nu_1,\ldots,\nu_n}\map{\A_q(\n)}{\A_q(\n)\otimes\cdots\A_q(\n)}$.

\section{Bases of canonical type}

We need the notion of a basis of dual canonical type. This is a strengthening of the notion of a perfect basis of $A_q(\n)$.

For $i\in I$ and $p\in \N$, define $_{i^p}r$ and $r_{i^p}$ to be the linear operators on $\A_q(\n)$ which are the adjoints of left and right multiplication by $\theta_i^{(p)}$ respectively.

Let $\sigma$ be the antiautomorphism which fixes the Chevalley generators. The bar involution is the automorphism fixing the Chevalley generators and sending $q$ to $q\inv$.

 \begin{definition}\label{def:can}
A basis $\mathbf B$ of $U_q(\n)$ is said to be of canonical type
if it satisfies the conditions~(\ref{it:BOCTa})--(\ref{it:BOTf}) below:
\begin{enumerate}
\item
\label{it:BOCTa}
The elements of $\mathbf B$ are weight vectors.
\item
\label{it:BOCTb}
$1\in\mathbf B$.
\item
\label{it:BOCTc}
Each right ideal $(\theta_i^p\mathbf U_q(\n)\otimes \Q(q))\cap U_q(\n)$ is spanned by a subset of
$\mathbf B$.
\item
\label{it:BOCTd}
In the bases induced by $\mathbf B$, the left multiplication by
$\theta_i^{(p)}$ from $U_q(\n)/\theta_i U_q(\n)$ onto
$\theta_i^pU_q(\n)/\theta_i^{p+1}U_q(\n)$ is given by a
permutation matrix.
\item
\label{it:BOCTe}
$\mathbf B$ is stable by $\sigma$.
\item
\label{it:BOTf}
$\mathbf B$ is stable under the bar involution.
\end{enumerate}
\end{definition}


\begin{definition}\label{defn:dualcantype}
A basis $\B^*$ of $\A_q(\n)$ is said to be of dual canonical type if it satisfies the conditions ~(\ref{it:BOdCTa})--(\ref{it:BOdCTf}) below:
\begin{enumerate}
\item
\label{it:BOdCTa}
The elements of $\mathbf B^*$ are weight vectors.
\item
\label{it:BOdCTb}
$1\in\mathbf B^*$.
\item
\label{it:BOdCTc}
Each $\ker(r_{i^p})$ is spanned by a subset of $\B^*$.
\item
\label{it:BOdCTd}
In the bases induced by $\mathbf B^*$, the map
\[
 r_{i^p}\map{\ker(r_{i^{p+1}})/\ker(r_{i^{p}})}{\ker(r_{i})}
\]
is given by a
permutation matrix.
\item
\label{it:BOdCTe}
$\mathbf B^*$ is stable by $\sigma$.
\item
\label{it:BOdCTf}
$\mathbf B^*$ is stable under the bar involution.
\end{enumerate}
\end{definition}

The dual basis to a basis of canonical type is of dual canonical type and vice versa.

It is proved in \cite[\S 5]{berensteinkazhdan} that each basis of dual canonical type induces a crystal structure on $\B$ (or $\B^\ast$) isomorphic to $B(\infty)$.

\begin{theorem}\label{basisofvla}\cite{baumann,folding}
 Let $\la\in P^+$. Let $V(\la)$ be a highest weight module of $U_q(\g)$ with highest weight $\la$ and let $v_\la$ be a highest weight vector. Let $\B$ be a basis of canonical type. Then the set
 \[
  \{bv_\la \mid b\in \B,bv_\la\neq 0\}
 \]
 is a basis of $V(\la)$.
\end{theorem}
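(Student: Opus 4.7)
My plan is to realise $V(\la)$ as the quotient of $U_q(\n)$ by an explicit ideal, show that this ideal is spanned by a subset of $\B$, and then conclude by a complementary basis argument.

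First I set up the natural surjection $\pi_\la\colon U_q(\n) \twoheadrightarrow V(\la)$ defined by $b \mapsto bv_\la$; here the action is interpreted via the antiautomorphism $\phi$ of $U_q(\g)$, so that $\theta_i$ acts as $F_i$ on $v_\la$. Surjectivity is immediate from the fact that $V(\la)$ is generated by $v_\la$ as a $U_q(\mathfrak{n}^-)$-module.

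Next I identify the kernel. Starting from the standard presentation of $V(\la)$ as the quotient of the Verma module $M(\la)$ by the submodule generated by the vectors $F_i^{(\a_i,\la)+1}v_\la$ for $i\in I$, and using that $M(\la)$ is free of rank one over $U_q(\mathfrak{n}^-)$, the kernel of $U_q(\mathfrak{n}^-) \twoheadrightarrow V(\la)$ is the left ideal $\sum_i U_q(\mathfrak{n}^-)F_i^{(\a_i,\la)+1}$. Transporting through the antiautomorphism $\phi$ yields
\[
 \ker(\pi_\la) \;=\; J_\la \;:=\; \sum_{i\in I} \theta_i^{(\a_i,\la)+1}\, U_q(\n),
\]
a sum of right ideals of $U_q(\n)$ of the exact form appearing in Definition \ref{def:can}(\ref{it:BOCTc}).

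Then I apply condition (\ref{it:BOCTc}): each summand $\theta_i^{(\a_i,\la)+1}U_q(\n)$ is spanned by some subset $\B_i \subseteq \B$. A sum of subspaces each spanned by a subset of a basis is spanned by the union of those subsets, so $J_\la$ is spanned by $\B_J := \bigcup_{i\in I} \B_i \subseteq \B$. Consequently $\B \setminus \B_J$ descends to a basis of the quotient $U_q(\n)/J_\la \isomto V(\la)$. For any $b\in \B$ one has $bv_\la = 0 \iff b\in J_\la \iff b\in \B_J$, so the set $\{bv_\la \mid b\in \B,\ bv_\la\neq 0\}$ is precisely $\{bv_\la \mid b \in \B\setminus\B_J\}$, which is the desired basis.

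The only conceptual obstacle is matching conventions between the action and the direction of the ideal: condition (\ref{it:BOCTc}) supplies \emph{right} ideals, and $\pi_\la$ (after passing through $\phi$) produces a kernel of exactly this form. Had the kernel instead been a left ideal, one would invoke (\ref{it:BOCTe}) and apply $\sigma$ to translate. Conditions (\ref{it:BOCTd}), (\ref{it:BOTf}) and the divided-power refinement play no role in this particular statement; the theorem is essentially a formal consequence of (\ref{it:BOCTc}) once the kernel has been correctly identified.
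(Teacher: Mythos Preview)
The paper does not supply its own proof of this theorem; it is quoted from \cite{baumann,folding}. Your argument---realise $V(\la)$ as the quotient of $U_q(\n)$ by the sum of the right ideals generated by the appropriate powers of $\theta_i$, use condition~(\ref{it:BOCTc}) to see that each such ideal is spanned by a subset of $\B$, and pass to the complement---is exactly the standard proof found in those references, so there is nothing substantive to compare.

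Two small technical points are worth tightening. First, over the integral form $\Z[q,q^{-1}]$, the ideal $\theta_i^{p}U_q(\n)$ need not coincide with its saturation $(\theta_i^{p}U_q(\n)\otimes\Q(q))\cap U_q(\n)$ (indeed $\theta_i^{p}=[p]_i!\,\theta_i^{(p)}$), and it is the saturated ideal that condition~(\ref{it:BOCTc}) addresses; since $V(\la)$ is torsion-free the kernel of $\pi_\la$ is automatically saturated, so your description of $J_\la$ should be replaced by the sum of the saturated ideals (equivalently, use divided powers) before invoking~(\ref{it:BOCTc}). Second, the exponent annihilating $v_\la$ is $\langle\alpha_i^\vee,\la\rangle+1$; writing it as $(\alpha_i,\la)+1$ is only correct under the normalisation $d_i=1$, so you should check this matches the paper's conventions. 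Neither point affects the structure of the argument.
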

%

The algebra $U_q(\n)$ is the quotient of the free $\Z[q,q\inv]$-algebra $'\f$ on the generators $\{\th_i\}_{i\in I}$. The shuffle algebra $\shuffle$ is defined as the graded dual of $'\f$, where the basis of words $[i_1,\ldots,i_n]$ in $\shuffle$ is dual to the basis of monomials in $'\f$. 
Then $\A_q(\n)$ is a subalgebra of $\shuffle$ and we write $\iota$ for the inclusion.

Let $i_1,i_2,\ldots$ be a sequence of elements of $I$ such that each element of $I$ appears infinitely often.
A word is said to be extremal for an element $x\in \A_q(\n)$ if it is minimal for the relevant lexicographical order amongst all words which appear in $\iota(x)$ with nonzero coefficient.

\begin{lemma}\label{extremalwords}
 Let $\B^*$ be a basis of dual canonical type and let $b^*\in\B^*$. Let $\ii=i_1^{a_1}i_2^{a_2}\cdots $ be an extremal word for $b^*$. Then $\ii$ appears in $\iota(b^*)$ with coefficient
\[
 [a_1]_{i_1}![a_2]_{i_2}!\cdots.
\]
\end{lemma}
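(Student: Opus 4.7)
The plan is to induct on the degree of $b^*$, peeling the leading block $i_1^{a_1}$ off the extremal word via the left adjoint ${}_{i_1^{a_1}}r$. As a preliminary step, I would transfer conditions~(\ref{it:BOdCTc}) and~(\ref{it:BOdCTd}) of Definition~\ref{defn:dualcantype} from the right adjoints $r_{i^p}$ to the left adjoints ${}_{i^p}r$. The key identity is ${}_{i^p}r\circ\sigma = \sigma\circ r_{i^p}$ on $\A_q(\n)$, which follows by a direct dual-pairing computation using that $\sigma$ is an antiautomorphism of $U_q(\n)$ fixing each $\th_i^{(p)}$. Combined with the $\sigma$-stability of $\B^*$, this yields that each $\ker({}_{i^p}r)$ is spanned by a subset of $\B^*$ and that the induced quotient map ${}_{i^p}r\map{\ker({}_{i^{p+1}}r)/\ker({}_{i^p}r)}{\ker({}_{i}r)}$ is a permutation matrix on basis vectors.

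The key observation is the peeling identity: for any $a\geq 0$, any letter $i$, and any word $\j$,
\[
 \text{coefficient of } [i^a\j] \text{ in } \iota(x) \;=\; [a]_i! \cdot \text{coefficient of } [\j] \text{ in } \iota({}_{i^a}r(x)),
\]
which follows immediately from $\th_i^a = [a]_i!\,\th_i^{(a)}$ and the adjointness $x(\th_i^{(a)}z)=({}_{i^a}r(x))(z)$ defining ${}_{i^a}r$. Applied to $b^*$, the extremality of $\ii = i_1^{a_1}\cdots i_k^{a_k}$ forces $a_1$ to be the largest integer with ${}_{i_1^{a_1}}r(b^*)\neq 0$: for any larger $a$, all words beginning with $i_1^a$ vanish from $\iota(b^*)$, while for any smaller $a$ the extremal word would have been lexicographically smaller since $i_1$ is the earliest letter in the order induced by the sequence. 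Hence the transferred condition~(\ref{it:BOdCTd}) gives $b_1^* := {}_{i_1^{a_1}}r(b^*)\in\B^*$, and in fact $b_1^*\in\ker({}_{i_1}r)$.

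The peeling identity then puts the words of $\iota(b_1^*)$ in bijection with those words of $\iota(b^*)$ whose $i_1$-prefix has length exactly $a_1$. The extremal word of $b^*$ is of this form, so its suffix $i_2^{a_2}\cdots i_k^{a_k}$ is the extremal word of $b_1^*$ with respect to the shifted sequence $(i_2, i_3, \ldots)$. Since $b_1^*$ has strictly smaller degree than $b^*$ whenever $a_1\geq 1$ (the case $a_1=0$ amounts to trivially advancing to the next letter of the sequence and terminates because $\ii$ has only finitely many nonzero blocks), induction gives that the extremal-word coefficient in $\iota(b_1^*)$ is $[a_2]_{i_2}!\cdots[a_k]_{i_k}!$. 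Multiplying by $[a_1]_{i_1}!$ yields the claim; the base case $b^*=1$ is immediate.

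The main obstacle I anticipate is verifying that extremality behaves correctly under shifting the sequence---a subtlety because the ``relevant lexicographical order'' itself depends on the sequence. I would handle this by first recasting extremality as the recursive condition that each $a_m$ equals the largest integer with ${}_{i_m^{a_m}}r\circ\cdots\circ{}_{i_1^{a_1}}r(b^*)\neq 0$, at which point the claim that suffixes of extremal words are extremal for the shifted sequence becomes essentially tautological.
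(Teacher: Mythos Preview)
Your proposal is correct and follows essentially the same inductive strategy as the paper: peel off the leading block via ${}_{i_1^{a_1}}r$, observe the result lies in $\B^*$, and recurse on the suffix. You are in fact more careful than the paper on one point: the paper invokes condition~(4) of Definition~\ref{defn:dualcantype} directly for ${}_{i^p}r$, whereas that condition is stated only for $r_{i^p}$; your use of $\sigma$-stability to transfer the condition to the left adjoints fills this small gap.
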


\begin{proof}
Consider $_{i_1^{a_1}}r(b^*)$. By Condition (4) in Definition \ref{defn:dualcantype}, it lies in $\B^*$. The word $i_2^{a_2}i_3^{a_3}\ldots$ is extremal for $_{i_1^{a_1}}r(b^*)$ and by induction, we may assume that it appears with coefficient $[a_2]_{i_2}!\cdots$ in $\iota(_{i_1^{a_1}}r(b^*))$. This implies that $\ii$ appears in $\iota(b^*)$ with the desired coefficient, namely $[a_1]![a_2]!\cdots$.
\end{proof}

\begin{lemma}\label{product}\cite[\S 2.8]{kleshchev}
 Let $\B^*$ be a basis of dual canonical type and let $x$ and $y$ be two elements of $\B^*$. Expand their product in the basis $\B^*$:
\begin{equation}\label{exp}
 xy=\sum_{b^*\in\B^*}c_{b^*}b^*.
\end{equation}
Then there exists $b^*$ such that $c_{b^*}$ is a power of $q$.
\end{lemma}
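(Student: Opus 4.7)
The plan is induction on the total weight of $x$ and $y$. If $x=1$ then $xy=y\in\B^*$ has coefficient $1=q^{0}$, a pure power of $q$; likewise if $y=1$. For the inductive step, assume $x,y\neq 1$ and, for $u\in\A_q(\n)$, set $d_i(u):=\max\{p\ge 0:{}_{i^p}r(u)\neq 0\}$. Since $x\neq 1$ we may pick $i\in I$ with $a:=d_i(x)\ge 1$; set $b:=d_i(y)\ge 0$, $x':={}_{i^a}r(x)$ and $y':={}_{i^b}r(y)$. Condition~(\ref{it:BOdCTd}) of Definition~\ref{defn:dualcantype}, together with the $\sigma$-stability~(\ref{it:BOdCTe}) which exchanges left and right derivations, forces $x',y'\in\B^*$ and in addition $x',y'\in\ker({}_ir)$.

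The heart of the argument is a Leibniz-type identity for ${}_ir$ on products in $\A_q(\n)$. Iterating this rule $a+b$ times produces a $q$-binomial expansion of ${}_ir^{a+b}(xy)$; but ${}_ir^{a+1}(x)={}_ir^{b+1}(y)=0$ kills every term except the one in which ${}_ir$ is applied $a$ times to the left factor and $b$ times to the right. After dividing by $[a+b]_i!$ the quantum multinomial simplifies and one obtains
\[
 {}_{i^{a+b}}r(xy)=q^{N}\,x'y'
\]
for an explicit integer $N$. The same bookkeeping shows $xy\in\ker({}_{i^{a+b+1}}r)$, so by Condition~(\ref{it:BOdCTc}) every $b^*\in\B^*$ appearing in the expansion $xy=\sum c_{b^*}b^*$ satisfies $d_i(b^*)\le a+b$. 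A second application of Leibniz to $x'$ and $y'$ (both in $\ker({}_ir)$) gives $x'y'\in\ker({}_ir)$, and $\deg(x')+\deg(y')<\deg(x)+\deg(y)$, so the induction hypothesis applies to $x'y'$: some coefficient $c'_{b^*_\star}$ in the expansion $x'y'=\sum c'_{b^*_\star}b^*_\star$ is a pure power of $q$.

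Applying ${}_{i^{a+b}}r$ to the expansion of $xy$ yields
\[
 q^{N}x'y'=\sum_{d_i(b^*)=a+b}c_{b^*}\,{}_{i^{a+b}}r(b^*).
\]
By Condition~(\ref{it:BOdCTd}), ${}_{i^{a+b}}r$ injects $\{b^*\in\B^*:d_i(b^*)=a+b\}$ via a permutation matrix into $\B^*\cap\ker({}_ir)$. Comparing the two expressions for $x'y'$ in the basis $\B^*$, every basis element with nonzero coefficient on the right-hand side lies in the image of this injection; since $c'_{b^*_\star}\neq 0$, the pure-$q$-power coefficient sits at some $b^*_\star$ in that image. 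Taking its preimage $b^*$, one reads off $c_{b^*}=q^{N}c'_{b^*_\star}$, which is still a pure power of $q$.

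The principal technical obstacle is verifying the Leibniz identity with precise $q$-exponents: since the coproduct on $U_q(\n)$ is not cocommutative, iterating ${}_ir$ on a product produces a twisted $q$-binomial expansion, and one must check that the lone surviving term after the vanishings collapses to a monomial $q^{N}x'y'$ rather than a genuine $q$-integer multiple. Once that identity is in hand, the remainder of the proof is careful bookkeeping with Conditions~(\ref{it:BOdCTc}) and~(\ref{it:BOdCTd}) and the induction.
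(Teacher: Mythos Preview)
Your argument is correct and is essentially the same idea as the paper's, just packaged differently. Both proofs apply the operators $r_{i^p}$ (or ${}_{i^p}r$) to strip off a maximal block of $i$'s from $xy$, use Condition~(\ref{it:BOdCTd}) to transport the coefficients, and reduce to a smaller instance.

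The paper carries this out in one stroke using the extremal-word formalism: it fixes an infinite sequence $i_1,i_2,\ldots$, invokes Lemma~\ref{extremalwords} to know the coefficient of the extremal word $i_1^{a_1+b_1}i_2^{a_2+b_2}\cdots$ in $xy$, and then applies the full string $\cdots r_{i_2^{a_2+b_2}}r_{i_1^{a_1+b_1}}$ at once to land on $1\in\B^*$ with a pure power of $q$. Your version unrolls the same computation as an explicit induction, peeling off one letter at a time. The Leibniz identity you isolate is exactly what underlies the paper's unproved assertion that ``the left hand side is a power of~$q$''; the paper's use of Lemma~\ref{extremalwords} is a way of packaging that same vanishing-and-collapse argument. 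Your approach is more self-contained but longer; the paper's is terser because the inductive bookkeeping is absorbed into Lemma~\ref{extremalwords}. One minor point: you work with the left operators ${}_{i^p}r$ and invoke $\sigma$-stability to transfer Conditions~(\ref{it:BOdCTc})--(\ref{it:BOdCTd}); since those conditions are stated for $r_{i^p}$ you could equally well run the whole argument on the right and avoid the detour.
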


\begin{proof}
  We have an extremal word $i_1^{a_1}i_2^{a_2}\ldots$ for $x$ and an extremal word $i_1^{b_1}i_2^{b_2}\cdots$ for $y$. Then the word $i_1^{a_1+b_1}i_2^{a_2+b_2}\ldots$ is extremal for $xy$ and appears with multiplicity
\[
 [a_1+b_1]![a_2+b_2]!\cdots
\]
by Lemma \ref{extremalwords}. Apply $\cdots r_{i_2^{a_2+b_2}}r_{i_1^{a_1+b_1}}$ to (\ref{exp}). The left hand side is a power of $q$, while by extremality, each term in the right hand side is $c_{b^*}$ times an element of $\B^*$ or zero. The element $1\in\B^*$ can only appear once, so there must be a term where $c_{b^*}$ is a power of $q$.
\end{proof}

For any weight basis $\B$, let $\B_\nu$ be the elements in $\B$ of weight $\nu$.

\begin{lemma}\label{gmlemma}
 Let $\B$ be a basis of canonical type. Let $\la\in P^+$ and let $\mu\leq \eta$ be two elements of $W\la$. Then there is exactly one choice of $b\in \B_{\eta-\mu}$ such that $b v_\mu\neq 0$. For this choice of $b$ we have $bv_\mu=v_\eta$.
\end{lemma}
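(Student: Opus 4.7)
The plan is to combine Theorem~\ref{basisofvla} with the algebra action of $U_q(\n)$ on $V(\la)$, reducing the lemma to a structural statement about the canonical basis.

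Since $\mu$ and $\eta$ are extremal, the weight spaces $V(\la)_\mu$ and $V(\la)_\eta$ are one-dimensional, so Theorem~\ref{basisofvla} will supply unique elements $b_\mu, b_\eta \in \B$ satisfying $b_\mu v_\la = v_\mu$ and $b_\eta v_\la = v_\eta$. For $b \in \B_{\eta-\mu}$ I then rewrite $bv_\mu = (bb_\mu)v_\la$, expand the product $bb_\mu = \sum_{b' \in \B} c^{b'}(b)\, b'$ in the basis $\B$, and apply this to $v_\la$: by Theorem~\ref{basisofvla} together with the one-dimensionality of $V(\la)_\eta$ only the $b' = b_\eta$ summand can contribute to the weight-$\eta$ part of $V(\la)$, which gives the compact description
\[
 bv_\mu \;=\; c^{b_\eta}(b)\, v_\eta.
\]
The lemma thus reduces to two claims: (i) there is a unique $b_0 \in \B_{\eta-\mu}$ with $c^{b_\eta}(b_0)\ne 0$, and (ii) $c^{b_\eta}(b_0) = 1$.

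For (i), existence of some $b$ with nonzero coefficient follows from the construction of a raising operator $x \in U_q(\n)_{\eta-\mu}$ with $xv_\mu = v_\eta$: I would build $x$ by choosing a chain $\mu = \mu_0 < \mu_1 < \cdots < \mu_k = \eta$ in $W\la$ made up of single simple reflections $\mu_{j+1} = s_{i_j}\mu_j$, and realizing the transition $v_{\mu_j} \mapsto v_{\mu_{j+1}}$ at each step by a divided power $\th_{i_j}^{(m_j)}$ via rank-one $\sl_2$-theory. For uniqueness the key point is that the kernel of the surjection $U_q(\n)_{\eta-\mu} \to V(\la)_\eta$, $b\mapsto bv_\mu$, is spanned by a subset of $\B_{\eta-\mu}$; I plan to deduce this by identifying $\mathrm{Ann}_{U_q(\n)}(v_\mu)$ with a sum of left ideals of the form $U_q(\n)\,\th_i^{n_i+1}$ (the Serre-type annihilations coming from the $\sl_2(\alpha_i)$-string structure through $v_\mu$) and then invoking properties~(\ref{it:BOCTc}) and~(\ref{it:BOCTe}) of Definition~\ref{def:can}. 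This $\B$-compatibility of the kernel is what I expect to be the main technical obstacle.

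For (ii), since $b_0, b_\mu, b_\eta$ are bar-invariant by property~(\ref{it:BOTf}) and $v_\la$ is bar-invariant by convention, the extremal vectors $v_\mu$ and $v_\eta$ are bar-invariant, and consequently so is $c^{b_\eta}(b_0)$. Combined with integrality of the expansion of $b_0 b_\mu$ in $\B$ and the standard leading-term normalization built into the canonical basis, this forces $c^{b_\eta}(b_0) = 1$, completing the proof plan.
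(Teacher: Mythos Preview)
Your proposal has a structural gap at the very first step. You write $bv_\mu = (bb_\mu)v_\la$ and plan to expand $bb_\mu$ in $\B$, but this conflates two different actions: the element $b$ in the lemma must \emph{raise} weight from $\mu$ up to $\eta$ (this is how $\theta_i$ acts, as $E_i$, and is what makes $D(\mu,\eta)(b)=(bv_\mu,v_\eta)$ nonzero), whereas to produce $v_\mu$ from the highest weight vector $v_\la$ one must \emph{lower}. These two operators live in opposite halves of $U_q(\g)$, so the product $bb_\mu$ is not an element of $U_q(\n)$ and cannot be expanded in $\B$. Theorem~\ref{basisofvla} (read carefully) does not hand you $b_\mu\in\B$ with $b_\mu v_\la=v_\mu$ under the same action used in the lemma.

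Even if you reformulate to avoid this, the heart of your uniqueness argument --- that $\mathrm{Ann}_{U_q(\n)}(v_\mu)$ is spanned by a subset of $\B$ --- is precisely the statement that the Demazure module $U_q(\n)v_\mu$ is compatible with $\B$. Your proposed route, identifying the annihilator with $\sum_i U_q(\n)\theta_i^{n_i+1}$, is a nontrivial theorem about presentations of Demazure modules (Joseph, Polo, Mathieu), and it does not follow from axioms (\ref{it:BOCTa})--(\ref{it:BOTf}) alone. Your coefficient argument is also incomplete: bar-invariance only pins $c^{b_\eta}(b_0)$ down to a symmetric element of $\Z[q,q^{-1}]$, and ``standard leading-term normalization'' is not an axiom of canonical type.

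The paper takes a different route that stays entirely within the axioms. It inducts on $\eta$ along the $W$-orbit: the base case $\eta=\la$ is handled via the Shapovalov adjunction $(bv_\mu,v_\la)=(v_\mu,\phi(b)v_\la)$, which transports the problem to Theorem~\ref{basisofvla} applied to $\phi(b)$; the inductive step chooses $i$ with $\eta<s_i\eta$, writes $v_\eta=F_i^{(c)}v_{s_i\eta}$, and uses axioms (\ref{it:BOCTc})--(\ref{it:BOCTd}) (i.e.\ the crystal operator $\tilde e_i$) to pass from $\eta$ to $s_i\eta$. A second short induction, now upward from $\eta=\mu$, gives the exact equality $bv_\mu=v_\eta$. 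No Demazure compatibility or annihilator presentation is needed.
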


\begin{proof}
Since the $q$-Shapovalov form is nondegenerate and the $\eta$-weight space of $V(\la)$ is one-dimensional, $bv_\mu\neq 0$ if and only if $(bv_\mu,v_\eta)\neq 0$.
 
We concentrate on the first statement and proceed by induction on $\eta$. First consider the base case when $\eta=\la$. In this case we have $(bv_\mu,v_\eta)=(v_\mu,\sigma(b)v_\eta)$. The invariance of $\B$ under $\sigma$ implies that $\phi(b)\in \B$ if and only if $b\in \B$. By Theorem \ref{basisofvla}, there is a unique choice of $\phi(b)\in \B$ making this pairing nonzero, hence a unique choice of $b\in \B$.

Now consider the case of $\eta\neq \la$. Then there exists $i$ such that $\eta< s_i\eta$. Then $v_\eta=F_i^{(c)} v_{s_i\eta}$ for some integer $c$. We compute
\[
 (bv_\mu,v_\eta)=(bv_\mu,F_i^{(c)}v_{s_i\eta})=(E_i^{(c)}bv_\mu,v_{s_i\eta}).
\]
If $b\in E_i U_q(\n)$ then $E_i^{(c)}bv_\mu$ factors through the weight space $V(\la)_{\eta-\a_i}$ which is zero. If $b\notin E_i U_q(\n)$, then there exists a unique $b'=\tilde e_i^{c}b\in\B$ such that $E_i^{(c)}b-b'\in E_i^{c+1}U_q(\n)$. By a similar argument, $E_i^{(c)}bv_\eta=b'v_\eta$. Here $\tilde e_i$ is the crystal operator.

By the inductive hypothesis, there exists a unique choice of $b'\in\B$ such that $(b'v_\mu,v_{s_i\eta})=0$. As $\tilde{e_i}$ is injective, there is thus at most one choice of $b$ such that $(bv_\mu,v_\eta)\neq 0$. The existence of such a $b$ is obvious as the condition $\mu\leq\eta$ means that it is easy to write down a product $x$ of Chevalley generators in $U_q(\n)_{\eta-\mu}$ such that $xv_\mu\neq 0$.

At this point we have proved the existence of exactly one choice of $b\in\B_{\eta-\mu}$ such that $bv_\mu\neq 0$. We now aim to show that for this choice of $b$, we have $bv_\mu=v_\eta$. We induct on $\eta$, the base case where $\eta=\mu$ being trivial.

Suppose then that $\eta\neq \mu$. Then there exists $i$ such that $\mu\leq s_i\eta<\eta$. Then by the inductive hypothesis, there exists $b'\in \B$ such that $b'v_\mu=v_{s_i\eta}$. Then $E_i^{(c)}b'v_\mu=v_\eta$ for some integer $c$ and we do a similar argument to show that $(\tilde e_i^c b') v_\mu=v_\eta$, so by uniqueness of $b$, we're done.
\end{proof}

\section{Generalised minors}

\begin{definition}
 Let $\la\in P^+$ and $\mu,\eta\in W\la$. The generalised minor $D(\mu,\eta)\in \A_q(\n)$ is defined by
 \[
  D(\mu,\eta)(x)=( x v_\mu, v_\eta )
 \]
for all $x\in U_q(\n)$.
\end{definition}

The agreement between this definition and the definition of \cite[\S 9.1]{kkkocombined} is discussed in \cite[\S 5]{glsq1}.

\begin{theorem}\label{genminors}
Let $\B^*$ be a basis of dual canonical type. All generalised minors $D(\mu,\eta)$ with $\mu\leq \eta$ lie in $\B^*$.
\end{theorem}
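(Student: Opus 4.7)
The plan is to realise $D(\mu,\eta)$ as the element of $\B^*$ dual to a single canonical-type basis vector. Let $\B\subset U_q(\n)$ denote the basis of canonical type dual to $\B^*$; this exists by the duality between canonical-type and dual-canonical-type bases noted just after Definition~\ref{defn:dualcantype}. It then suffices to pair $D(\mu,\eta)$ with every $b\in\B$ and show that exactly one pairing is nonzero and equals $1$.

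First I would observe that $D(\mu,\eta)$ is homogeneous of weight $\eta-\mu$: distinct weight spaces of $V(\la)$ are mutually orthogonal under the $q$-Shapovalov form, so if $x\in U_q(\n)$ has weight different from $\eta-\mu$, then $xv_\mu$ sits in a weight space of $V(\la)$ distinct from that of $v_\eta$, and $(xv_\mu,v_\eta)=0$. Therefore only elements $b\in\B_{\eta-\mu}$ can contribute to the expansion of $D(\mu,\eta)$ in $\B^*$.

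Next I would apply Lemma~\ref{gmlemma}, which produces a unique $b\in\B_{\eta-\mu}$ with $bv_\mu\ne 0$ and asserts that for this $b$ one has $bv_\mu=v_\eta$. Combined with the normalisation $(v_\eta,v_\eta)=1$ for extremal weight vectors recorded earlier, this gives $D(\mu,\eta)(b)=(bv_\mu,v_\eta)=(v_\eta,v_\eta)=1$, while for every other $b'\in\B_{\eta-\mu}$ we have $b'v_\mu=0$ and hence $D(\mu,\eta)(b')=0$. Thus $D(\mu,\eta)$ coincides with the member of $\B^*$ dual to $b$, and therefore lies in $\B^*$.

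I do not anticipate a real obstacle: the theorem is essentially the pairing reformulation of Lemma~\ref{gmlemma}, with the only numerical input being the normalisation of the Shapovalov form on extremal weight vectors. Care is needed only to use the correct weight grading on $\A_q(\n)$ and to appeal to the canonical-type/dual-canonical-type duality for the basis $\B$.
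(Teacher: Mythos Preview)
Your proof is correct and is essentially the same as the paper's: both pass to the dual canonical-type basis $\B$, compute $D(\mu,\eta)(b)=(bv_\mu,v_\eta)$, and invoke Lemma~\ref{gmlemma} to see that this pairing is nonzero for exactly one $b$ and equals $1$ there. You have simply made explicit the weight-homogeneity argument and the normalisation $(v_\eta,v_\eta)=1$ that the paper leaves implicit.
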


\begin{proof}
Suppose $b\in\B$ where $\B$ is the basis dual to $\B^*$. Then $D(\mu,\eta)(b)=(b v_\mu,v_\eta)$. By Lemma \ref{gmlemma}, there is a unique choice of $b\in\B$ such that this is nonzero, and for this particular choice of $b$, $bv_\mu=v_\eta$. Therefore $D(\mu,\eta)\in\B^*$.
\end{proof}

%
%
%
%
%

\begin{lemma}\label{restrictgm}
Let $\la\in P^+$ and suppose that $\mu_1<\mu_2<\cdots<\mu_{n+1}$ are weights in $W\la$. Then
\[
 r_{\mu_1-\mu_2,\ldots,\mu_{n}-\mu_{n+1}}(D(\mu_1,\mu_{n+1})) = D(\mu_1,\mu_2)\otimes\cdots\otimes D(\mu_{n},\mu_{n+1}).
\]
\end{lemma}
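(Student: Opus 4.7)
My plan is to verify the identity by pairing both sides against an arbitrary tensor $a_1 \otimes \cdots \otimes a_n \in U_q(\n)_{\mu_2-\mu_1} \otimes \cdots \otimes U_q(\n)_{\mu_{n+1}-\mu_n}$. Since the iterated coproduct $r_{\nu_1,\ldots,\nu_n}$ on $\A_q(\n)$ is by definition dual to the iterated multiplication on $U_q(\n)$ in the appropriate graded components, pairing the left-hand side with $a_1 \otimes \cdots \otimes a_n$ reduces to evaluating $D(\mu_1,\mu_{n+1})$ on a single ordered product of the $a_i$, which by definition of $D$ is the Shapovalov-form pairing of that product acting on $v_{\mu_1}$ against $v_{\mu_{n+1}}$ inside $V(\la)$.

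The key geometric input is extremality: since each $\mu_i$ lies in $W\la$, the weight space $V(\la)_{\mu_i}$ is one-dimensional, spanned by the normalised vector $v_{\mu_i}$ with $(v_{\mu_i},v_{\mu_i})=1$. Consequently, for any $a \in U_q(\n)_{\mu_{i+1}-\mu_i}$ the vector $a v_{\mu_i}$ has weight $\mu_{i+1}$ and is therefore forced to be a scalar multiple of $v_{\mu_{i+1}}$; extracting the scalar via the Shapovalov form produces the telescoping identity
\[
 a v_{\mu_i} = ( a v_{\mu_i}, v_{\mu_{i+1}} ) \, v_{\mu_{i+1}} = D(\mu_i,\mu_{i+1})(a) \, v_{\mu_{i+1}}.
\]

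Iterating this relation, the successive action of the $a_i$ carries $v_{\mu_1}$ through the chain $v_{\mu_1} \mapsto v_{\mu_2} \mapsto \cdots \mapsto v_{\mu_{n+1}}$ of one-dimensional extremal weight spaces, picking up a factor $D(\mu_i,\mu_{i+1})(a_i)$ at each step. Pairing the resulting multiple of $v_{\mu_{n+1}}$ with $v_{\mu_{n+1}}$ therefore yields $\prod_{i=1}^n D(\mu_i,\mu_{i+1})(a_i)$, which is exactly the pairing of the right-hand side $D(\mu_1,\mu_2) \otimes \cdots \otimes D(\mu_n,\mu_{n+1})$ with $a_1 \otimes \cdots \otimes a_n$. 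Nondegeneracy of the graded pairing between $\A_q(\n)$ and $U_q(\n)$ then finishes the argument.

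The substantive content is the one-dimensionality of the extremal weight spaces, which turns the action on $v_{\mu_1}$ into a purely deterministic telescoping; I expect the main obstacle to be purely notational, namely fixing the order in which the $a_i$ must be multiplied so that the dualised coproduct convention correctly matches the tensor ordering on the right-hand side.
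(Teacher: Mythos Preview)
Your argument is correct and in fact more direct than the paper's. Both proofs proceed by pairing against elements of $U_q(\n)^{\otimes n}$ and using that the intermediate extremal weight spaces $V(\la)_{\mu_i}$ are one-dimensional; the difference lies in how that one-dimensionality is exploited. The paper pairs only against tensors $b_1\otimes\cdots\otimes b_n$ with each $b_i$ in a fixed basis $\B$ of canonical type, and then invokes Lemma~\ref{gmlemma} repeatedly to conclude that exactly one such tuple gives a nonzero pairing, with value $1$, thereby identifying $r_{\nu_1,\ldots,\nu_n}(D(\mu_1,\mu_{n+1}))$ as a pure tensor of elements of $\B^*$ which the proof of Lemma~\ref{gmlemma} then pins down as the $D(\mu_i,\mu_{i+1})$. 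You instead pair against arbitrary homogeneous $a_i$ and use only the statement that $a\,v_{\mu_i}$ lands in the one-dimensional space $k v_{\mu_{i+1}}$, reading off the scalar via $(v_{\mu_{i+1}},v_{\mu_{i+1}})=1$. Your route avoids any reference to bases of canonical type or to Lemma~\ref{gmlemma}, and the telescoping is transparent; the paper's route has the minor advantage that it simultaneously exhibits each tensor factor as an element of $\B^*$, though that is already known from Theorem~\ref{genminors}. Your caveat about the multiplication order is well-placed: the paper's own indexing of the coproduct components has the same convention issue, and once one fixes the order consistently with the duality, both arguments go through.
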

\begin{proof}
Let $\B$ be a basis of canonical type. Suppose that $b_1,\ldots,b_{n}\in\B$ are such that 
\[
 (r_{\mu_1-\mu_2,\ldots,\mu_{n}-\mu_{n+1}}(D(\mu_1,\mu_{n+1})),b_1\otimes \cdots \otimes b_n)\neq 0.
\]
Then $(D(\mu_1,\mu_{n+1}),b_1\cdots b_{n})\neq 0$. By the definition of the generalised minor, this implies that
\[
 (v_{\mu_1},b_1\cdots b_{n} v_{\mu_{n+1}})\neq 0.
\]
By repeated application of Lemma \ref{gmlemma}, there is a unique choice of $b_1,\ldots,b_{n}\in \B$ such that this pairing is nonzero, and for this choice of $b_1,\ldots,b_n$, the value of the pairing is 1.

Furthermore, by looking at the degrees involved, we see that the identity
 $$b_i
v_{\mu_{i+1}}=v_{\mu_i}$$ holds for each $i$.

Therefore we have identified $r_{\mu_1-\mu_2,\ldots,\mu_{n}-\mu_{n+1}}(D(\mu_1,\mu_{n+1}))$ as a tensor product of elements of $\B^*$.
Furthermore the proof of Lemma \ref{gmlemma} identifies each factor as $D(\mu_i,\mu_{i+1})$.
\end{proof}

\begin{lemma}\label{squaregm}
Let $\la$ be a dominant weight and $\mu,\zeta\in W\la$. Then
 \[
D(\mu,\zeta)^2=q^{(\mu-\zeta,\mu-\zeta)/2}D(2\mu,2\zeta).
 \]
\end{lemma}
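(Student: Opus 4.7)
The plan is to prove the identity by evaluating both sides as linear functionals on $U_q(\n)_{2(\zeta-\mu)}$ and rewriting each as a matrix coefficient on the tensor square $V(\la)\otimes V(\la)$. The fundamental geometric input is that the $U_q(\g)$-submodule of $V(\la)\otimes V(\la)$ generated by $v_\la\otimes v_\la$ is a copy of $V(2\la)$, and that under this embedding the extremal weight vector $v_{2\eta}$ coincides with $v_\eta\otimes v_\eta$ for every $\eta\in W\la$. I would establish the latter by induction on the length of a Weyl group element $w$ with $\eta=w\la$: writing $w=s_iw'$, the step is a $q$-binomial computation of $F_i^{(2c)}(v_{w'\la}\otimes v_{w'\la})$, where $F_i$ acts through the $U_q(\g)$-coproduct, and the outcome is exactly $v_{w\la}\otimes v_{w\la}$.

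Granted this identification, for $x\in U_q(\n)_{2(\zeta-\mu)}$ both $xv_{2\mu}$ and $v_{2\zeta}$ lie in the one-dimensional extremal weight space $V(2\la)_{2\zeta}$, on which the $V(2\la)$-Shapovalov form and the restricted tensor product form agree (both being normalised to $1$ on $v_\zeta\otimes v_\zeta$). Therefore
\[
D(2\mu,2\zeta)(x)=(\Delta(x)(v_\mu\otimes v_\mu),\,v_\zeta\otimes v_\zeta),
\]
where $\Delta$ is the $U_q(\g)$-coproduct and the pairing uses the tensor product form. On the other hand, the definition of the product on $\A_q(\n)$ as dual to the coproduct $\Delta_{\n}$ on $U_q(\n)$ yields
\[
D(\mu,\zeta)^2(x)=\sum(x_{(1)}v_\mu,v_\zeta)(x_{(2)}v_\mu,v_\zeta),\qquad \Delta_{\n}(x)=\sum x_{(1)}\otimes x_{(2)}.
\]

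The two coproducts $\Delta$ and $\Delta_{\n}$ on $U_q(\n)$ differ by insertion of $K$-factors, which act on weight vectors by $q$-scalars. A weight-matching argument shows that the summands contributing to the pairing with $v_\zeta\otimes v_\zeta$ have $|x_{(1)}|=\zeta-\mu$, and the corresponding $K$-factor evaluates to the uniform scalar $q^{(\zeta-\mu,\mu)}$, giving $D(2\mu,2\zeta)=q^{(\zeta-\mu,\mu)}\,D(\mu,\zeta)^2$. Since $\mu,\zeta\in W\la$ have common norm $(\la,\la)$, one has $(\zeta-\mu,\mu)=-(\mu-\zeta,\mu-\zeta)/2$, which rearranges to the claimed identity. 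The principal obstacle is the embedding identification $v_{2\eta}=v_\eta\otimes v_\eta$: routine in spirit but requiring a clean $q$-binomial computation to rule out any spurious $q$-shift; once that is in place the rest is standard quantum-group bookkeeping.
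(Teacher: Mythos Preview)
Your approach is sound in principle but follows a genuinely different route from the paper's proof. You work entirely in the quantum setting: embed $V(2\la)$ into $V(\la)\otimes V(\la)$, identify $v_{2\eta}$ with $v_\eta\otimes v_\eta$ via a $q$-binomial induction, and then track the discrepancy between the $U_q(\g)$-coproduct and the $U_q(\n)$-coproduct to extract the power of $q$. This is the direct, computational road, and with care over conventions (the coproduct, the antiautomorphism $\phi$, and the precise form of $\Delta(F_i^{(n)})$) it goes through; your identification of the exponent via $(\zeta-\mu,\mu)=-(\mu-\zeta,\mu-\zeta)/2$ is correct.

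The paper instead sidesteps all quantum bookkeeping. It first specialises to $q=1$, where the tensor form on $V(\la)\otimes V(\la)$ restricts to the Shapovalov form on $V(2\la)$ without any twist, giving $D(\mu,\zeta)^2=D(2\mu,2\zeta)$ classically. It then invokes heavier machinery: choosing $\B^*$ to be the KLR-categorified basis (so that multiplication has structure constants in $\N[q,q^{-1}]$), and using Theorem~\ref{genminors} to place both $D(\mu,\zeta)$ and $D(2\mu,2\zeta)$ in $\B^*$, positivity forces $D(\mu,\zeta)^2=q^N D(2\mu,2\zeta)$ for a single integer $N$. Finally $N$ is read off from the bar-involution identity~(\ref{barproduct}). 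What the paper's approach buys is avoiding exactly the computation you flag as the principal obstacle; the price is dependence on categorification and positivity. Your approach is more elementary and self-contained, but demands that the $q$-binomial and $K$-factor calculations be carried out carefully and consistently with the paper's conventions.
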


\begin{proof}
The submodule of $V(\la)\otimes V(\la)$ generated by $v_\la\otimes v_\la$ is isomorphic to $V(2\la)$. Now let us specialise to $q=1$. Then we can define a bilinear form on $V(\la)\otimes V(\la)$ by setting
\[
 (x_1\otimes x_2,y_1\otimes y_2)=(x_1,y_1)(x_2,y_2)
\]
and extending by linearity, where $(\cdot,\cdot)$ is the Shapovalov form. When restricted to $V(2\la)$ inside $V(\la)\otimes V(\la)$ this form satisfies
\[
 (xv,w)=(v,\phi(x)w)
\]
for all $x\in\g$, hence is the Shapovalov form on $V(2\la)$.

Note that $v_\mu\otimes v_\mu$ and $v_\zeta\otimes v_\zeta$ are normalised extremal weight vectors in $V(2\la)$. Thus at $q=1$, we have
\begin{align*}
 D(2\mu,2\zeta)(x) &= (x(v_\mu\otimes v_\mu),v_\zeta\otimes v_\zeta) \\
 &= (D(\mu,\zeta)\otimes D(\mu,\zeta))(\Delta(x)) \\
 &= D(\mu,\zeta)^2(x).
\end{align*}

We have thus proved this lemma when $q$ is specialised to 1. Now pick a basis $\B^*$ of dual canonical type such that the structure constants for multiplication all lie in $\N[q,q\inv]$. For example, by \cite[Theorem 12.7]{folding}, we could take a basis coming from simple representations of KLR algebras. From Theorem \ref{genminors}, $D(\mu,\zeta)$ and $D(2\mu,2\zeta)$ both lie in $\B^*$. So by considering the expansion of $D(\mu,\zeta)^2$ in the basis $\B^*$ and comparing with what we already know about the behaviour at $q=1$, the only option is that $D(\mu,\zeta)^2=q^N D(2\mu,2\zeta)$ for some integer $N$.
We can identify the integer $N$ from the identity (\ref{barproduct}).
\end{proof}
\section{KLR algebras with automorphism}

Let $Q$ be a quiver with vertex set $I$ (not the same as the set $I$ in $\S 2$) and let $a$ be a finite order automorphism of $Q$. We assume that there are no arrows between any pair of vertices in the same $a$-orbit. Let $n$ be the order of $a$.

To such a quiver with automorphism, let $J$ be the set of $a$-orbits on $I$. Define $\cdot\map{J\times J}{\Z}$ by $j\cdot j=2|j|$ and for $j\neq k$, $-j\cdot k$ is equal to the total number of edges in $Q$ between an element of $j$ and an element of $k$. This is a symmetrisable Cartan datum expressed using the formulation in \cite{lusztigbook}. It is known that every symmetrisable Cartan datum arises from such a construction. The relationship between $(Q,a)$ and the Kac-Moody Lie algebra $\g$ we work with is that $\g$ is the Kac-Moody Lie algebra with Cartan datum $(J,\cdot)$.


Define, for any $\nu=\sum_{i\in I}\nu_i i\in\N I$, $|\nu|=\sum_{i\in I} \nu_i$ and 
\[
 \Seq(\nu)=\{\ii=(\ii_1,\ldots,\ii_{|\nu|})\in I^{|\nu|}\mid\sum_{j=1}^{|\nu|} \ii_j=\nu\}.
\]
This is acted upon by the symmetric group $S_{|\nu|}$ in which the adjacent transposition $(i,i+1)$ is denoted $s_i$.

Define polynomials $Q_{i,j}(u,v)$ for $i,j\in I$ by
\[
 Q_{i,j}(u,v)=\begin{cases}
\prod_{i\to j}(u-v) \prod_{j\to i}(v-u)
               &\text{if } i\neq j \\
0 &\text{if } i=j 
              \end{cases}
\] where the products are over the sets of edges in $Q$ from $i$ to $j$ and from $j$ to $i$, respectively.

Let $k$ be an algebraically closed field whose characteristic does not divide $n$.

\begin{definition}
The KLR algebra $R(\nu)$ is the associative $k$-algebra generated by elements $e_{\bf i}$, $y_j$, $\tau_k$ with ${\bf i}\in \seq(\nu)$, $1\leq j\leq |\nu|$ and $1\leq k< |\nu|$ subject to the relations
\begin{equation} \label{eq:KLR}
\begin{aligned}
& e_\ii e_{\uj} = \delta_{\ii, \uj} e_\ii, \ \
\sum_{\ii \in {\rm Seq}(\nu)}  e_\ii = 1, \\
& y_{k} y_{l} = y_{l} y_{k}, \ \ y_{k} e_\ii = e_\ii y_{k}, \\
& \tau_{l} e_\ii = e_{s_{l}\ii} \tau_{l}, \ \ \tau_{k} \phi_{l} =
\tau_{l} \phi_{k} \ \ \text{if} \ |k-l|>1, \\
& \tau_{k}^2 e_\ii = Q_{\ii_{k}, \ii_{k+1}} (y_{k}, y_{k+1})e_\ii, \\
& (\tau_{k} y_{l} - y_{s_k(l)} \tau_{k}) e_\ii = \begin{cases}
-e_\ii \ \ & \text{if} \ l=k, \ii_{k} = \ii_{k+1}, \\
e_\ii \ \ & \text{if} \ l=k+1, \ii_{k}=\ii_{k+1}, \\
0 \ \ & \text{otherwise},
\end{cases} \\[.5ex]
& (\tau_{k+1} \tau_{k} \tau_{k+1}-\tau_{k} \tau_{k+1} \tau_{k}) e_\ii\\
& =\begin{cases} \dfrac{Q_{\ii_{k}, \ii_{k+1}}(y_{k},
y_{k+1}) - Q_{\ii_{k}, \ii_{k+1}}(y_{k+2}, y_{k+1})}
{y_{k} - y_{k+2}}e_\ii\ \ & \text{if} \
\ii_{k} = \ii_{k+2}, \\
0 \ \ & \text{otherwise}.
\end{cases}
\end{aligned}
\end{equation}
\end{definition}

We remark that we have not used the most generic choice of polynomials $Q_{ij}(u,v)$ as in \cite{rouquier} to define these algebras. However it is important to us that we do use this choice, which implies that these algebras are isomorphic to certain Ext algebras \cite{vv,rouquier2,maksimau} on the moduli stack of representations of the quiver $Q$. 
We rely on some results from \cite{kkkocombined} which require this geometric interpretation of these algebras. This assumption also implies that the algebras $R(\nu)$ \emph{symmetric} in the sense of \cite{kkk}, giving us access to the theory of $R$-matrices for KLR algebras.

The algebras $R(\nu)$ are $\Z$-graded by setting $e_\ii$ to have degree zero, $y_i$ to have degree 2 and $\tau_i e_\ii$ to have degree $\ii_i\cdot\ii_{i+1}$. All $R(\nu)$-modules which we consider in this paper will be graded left modules.

%

The automorphism $a$ of $Q$ induces isomorphisms $R(\nu)\cong R(a\nu)$ for all $\nu$. In particular, when $a\nu=\nu$, it induces an automorphism of the algebra $R(\nu)$, which we will also denote by $a$.

Consider $\nu$ such that $a\nu=\nu$. We consider the category $\C_\nu$ of pairs $(M,\sigma)$ where $M$ is a representation of $R(\nu)$ and $\sigma\map{a^*M}{M}$ is an isomorphism such that
\begin{equation}\label{atonis1}
 \sigma \circ  a^*\sigma\circ\cdots \circ (a^*)^{n-1}\sigma = \mathrm{id}_M.
\end{equation}

 A morphism from $(M,\sigma)$ to $(N,\tau)$ is a $R(\nu)$-module map $f\map{M}{N}$ such that the following diagram commutes:
 \[
  \begin{CD}
   a^*M @>a^*f>> a^*N\\
   @VV\sigma V   @VV\tau V\\
   M @>f>> N
  \end{CD}
 \]
 
 Let $\L_\nu$ be the full subcategory of $\C_\nu$ whose objects are pairs $(M,\sigma)$ where $M$ is finite dimensional. In \cite{folding} it is shown that $\C_\nu$ and $\L_\nu$ are abelian categories.

 \section{The Grothendieck group construction}

Let $\Z[\zeta_n]$ denote the ring of cyclotomic integers where $\zeta_n$ is a primitive $n$-th root of unity and fix a ring homomorphism $\Z[\zeta_n]\to k$.

An object $(A,\sigma)$ of $\C_\nu$ is said to be \emph{traceless} if there is a representation $M$ of $R(\nu)$, an integer $t\geq 2$ dividing $n$ such that $(a^*)^tM\cong M$, and an isomorphism
\[
 A\cong M\oplus a^*M \oplus \cdots \oplus (a^*)^{t-1}M
\]
under which $\sigma$ corresponds to an isomorphism carrying the summand $(a^*)^jM$ onto $(a^*)^jM$ for $1\leq j<t$ and the summand $(a^*)^tM$ onto $M$.

The group $K(\L_\nu)$ is defined to be the $\Z[\zeta_n]$-module generated by symbols $[(M,\sigma)]$ where $(M,\sigma)$ is an object of $\L_\nu$, subject to the relations
\begin{align*}
 [X] &= [X']+[X'']  \hphantom{0}\quad \text{if $0\to X'\to X\to X''\to 0$ is exact}& \\
 [(M,\zeta_n\sigma)]&=\zeta_n[(M,\sigma)] & \\
 [X] &= 0 \quad\hphantom{[X']+[X''']} \text{if $X$ is traceless                             {}}
\end{align*}

There is also a version of this construction for the category of finitely generated projective modules. It plays an important role in \cite{folding} but is not needed in this paper.


There is an action of $q$ on $K(\L_\nu)$ by shifting the grading. Thus $K(\L_\nu)$ is naturally a module over the ring $\Z[\zeta_n,q,q\inv]$.

%

 \section{Induction, restriction and duality}
 
 Given $(M,\sigma)$ and $(N,\tau)$ in $\C_\la$ and $\C_\mu$ respectively, we can form the induced module
 \[
  M\circ N:= R(\la+\mu)\bigotimes_{R(\la)\otimes R(\mu)} M\otimes N.
 \]
 The isomorphisms $\sigma$ and $\tau$ induce an isomorphism $a^*M\circ a^*N\to M\circ N$. When precomposed with the natural isomorphism $a^*(M\circ N)\cong a^*M\circ a^*N$, we obtain an isomorphism
 \[
  \sigma\circ \tau: a^*(M\circ N)\to M\circ N.
 \]
The object
\[
 (M,\sigma)\circ (N,\tau):= (M\circ N,\sigma\circ \tau)
\] is an element of $\C_{\la+\mu}$.

This induction functor induces a product structure on the direct sum of Grothendieck groups
\[
 \quad \bigoplus_{\nu\in \N J} K(\L_\nu).
\]

For $\la,\mu\in \N J$, let $e_{\la\mu}$ be the image of the identity under the inclusion $R(\la)\otimes R(\mu)\to R(\la+\mu)$. Given a $R(\la+\mu)$-module $M$, its restriction is defined by
\[
 \Res_{\la,\mu} M := e_{\la\mu} M.
\] It is a $R(\la)\otimes R(\mu)$-module.

Since $e_{\la\mu}$ is invariant under $a$, there is a canonical isomorphism $a^*(\Res M)\cong \Res(a^*M)$. Thus we obtain a restriction functor from $\C_{\la+\mu}$ to $\C_{\la\sqcup\mu}$.
This restriction functor induces a coproduct structure on the same direct sum of Grothendieck groups
\[
 \quad \bigoplus_{\nu\in \N J} K(\L_\nu),
\]
the details of which can be found in \cite{folding}.
%
%
%

Let $\psi$ be the antiautomorphism of $R(\nu)$ which sends each of the generators $e_\ii$, $y_j$ and $\tau_k$ to themselves.

Let $M$ be a finite dimensional $R(\nu)$-module. Then its dual $\D(M):=\Hom_k(M,k)$ is also an $R(\nu)$-module by
\[
 r(\la)(m)=\la(\psi(r)m)
\]
for all $r\in R(\nu)$, $\la\in\D(M)$ and $m\in M$. This extends to a contravariant autoequivalence of $\L_\nu$ which we also denote $\D$, where $\D(L,\sigma)=(\D L,(\D\sigma)\inv)$.

An object $(M,\sigma)$ of $\L_\nu$ is said to be self-dual if there is an isomorphism $$(M,\sigma)\cong (\D M,(\D\sigma)\inv).$$

In \cite{folding}, a ring $\Z\subset A\subset \Z[\zeta_n+\zeta_n\inv]$ is defined. This ring is equal to $\Z$ whenever $n<5$ or $k$ has characteristic zero, and there are no known examples where $A\neq \Z$. 

Let $\kk^*$ be the $A[q,q\inv]$-submodule of \(
  \bigoplus_{\nu\in \N J} K(\L_\nu)
\) spanned by the self-dual simple modules. For each $j\in J$, there is a canonical self-dual simple object of $\L_j$ which we denote by $L(j)$.
It is unique if $n$ is odd and unique up to rescaling $\sigma$ by $\pm 1$ if $n$ is even. A canonical choice is made in \cite[\S 7]{folding}. The main theorem of \cite{folding} is:


\begin{theorem}\cite[Theorem 6.2]{folding}\label{thm:folding}
 There is an $A[q,q\inv]$-linear grading preserving isomorphism $\ga^*\map{\kk^*}{\A_q(\n)}\otimes_\Z A$ such that
\begin{enumerate}
 \item $\ga^*([L(j)])=\theta_j^*$ for all $j\in J$.
\item Under the isomorphism $\ga^*$, the multiplication $\A_q(\n)_\la\otimes \A_q(\n)_\mu\to \A_q(\n)_{\la+\mu}$ corresponds to the product on $\kk^*$ induced by $\Ind_{\la,\mu}$.
\item Under the the isomorphism $\ga^*$, the comultiplication $\A_q(\n)_{\la+\mu}\to \A_q(\n)_\la\otimes \A_q(\n)_\mu$ corresponds to the  coproduct on $\kk^*$ induced by $\Res_{\la,\mu}$.
\item Under the isomorphism $\ga^*$, the bar involution on $\A_q(\n)$ corresponds to the anti-linear antiautomorphism on $\kk^*$ induced by the duality $\D$.
\end{enumerate}
\end{theorem}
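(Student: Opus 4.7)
The plan is to deduce this theorem from the unfolded Khovanov--Lauda--Varagnolo--Vasserot categorification by an equivariantization with respect to the diagram automorphism $a$. Forgetting $a$, the KLR algebras $R(\nu)$ for the quiver $Q$ categorify $\A_q(\mathfrak{n}_Q)$, where $\mathfrak{n}_Q$ is the positive part of the symmetric Kac--Moody algebra with Cartan datum $(I,\cdot)$: there is a $\Z[q,q\inv]$-bialgebra isomorphism $K(\bigoplus_\nu R(\nu)\fmod) \cong \A_q(\mathfrak{n}_Q)$ under which $[L(i)]\mapsto \theta_i^*$, induction matches multiplication, restriction matches comultiplication, and duality $\D$ matches the bar involution; the degree twist in \eqref{barproduct} arises from $\psi$ swapping the two tensor factors through $e_{\la\mu}$.

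Next I would equivariantize. The automorphism $a$ acts by permuting the generators $\theta_i^*$ on the algebra side and by the autoequivalence $a^*$ on the categorical side; since $a$ permutes the $e_{\la\mu}$ and commutes with $\psi$, these actions intertwine $\Ind$, $\Res$ and $\D$, giving an $a$-equivariant upgrade of the symmetric KLR isomorphism. The definition of $\kk^*$ via pairs $(M,\sigma)$, with coefficients extended to $A[q,q\inv]$ and quotiented by the traceless relation, is precisely designed to extract the ``fixed points'' of this $a$-action: orbits of simples with $a^*M\cong M$ produce self-dual simples in $\L_\nu$, while orbits with $a^*M\not\cong M$ contribute only traceless objects and hence vanish. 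The generator $L(j)$ for $j\in J$ is the canonical equivariant structure on a product $L(i_1)\circ\cdots\circ L(i_{|j|})$, which is $a$-stable because $a$ cyclically permutes its factors. Part (1) is then a matter of definition; parts (2) and (3) follow directly from the compatibility of $\Ind$ and $\Res$ with $a^*$ (with a Mackey filtration handling the $\Res\circ\Ind$ comparison needed to match the coproduct); and part (4) follows from the compatibility of $\D$ with $a^*$ together with \eqref{barproduct}.

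The main obstacle is controlling the rank and coefficient structure of $\kk^*$: proving that the self-dual simples do in fact span it over $A[q,q\inv]$, that the traceless relation and the $\zeta_n$-rescaling relation do not over-collapse the group, and identifying the subring $A\subset \Z[\zeta_n+\zeta_n\inv]$ correctly. This requires a careful classification of equivariant structures on simple $R(\nu)$-modules (an $a$-fixed simple admits $n$ distinct equivariant structures differing by $n$-th roots of unity, and one must pin down a canonical self-dual representative), followed by a weight-space-by-weight-space rank count matching $\dim \A_q(\n)_\nu$ against the number of $\zeta_n$-rescaling orbits of self-dual simples in $\L_\nu$. This rank match is what ultimately forces the map $\ga^*$ defined by $[L(j)]\mapsto \theta_j^*$ to be an isomorphism rather than merely a surjection.
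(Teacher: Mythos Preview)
The paper does not prove this theorem; it is quoted verbatim as \cite[Theorem~6.2]{folding} and used as a black box. There is therefore no proof in the present paper to compare your proposal against.

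That said, your sketch is broadly in the spirit of \cite{folding}, but it blurs the central point. You describe $\kk^*$ as ``extracting the fixed points'' of the $a$-action on the symmetric categorification, with target implicitly the $a$-fixed subalgebra of $\A_q(\mathfrak{n}_Q)$. But the theorem asserts an isomorphism with $\A_q(\mathfrak{n})$ for the \emph{folded symmetrisable} datum $(J,\cdot)$, which is not a priori the same object: the generator $[L(j)]$ is not sent to an $a$-symmetrised sum $\sum_{i\in j}\theta_i^*$ inside $\A_q(\mathfrak{n}_Q)$, but to the new generator $\theta_j^*$ of a different algebra with different Serre relations (and, in particular, different quantum integers $[n]_j$ governed by $d_j=|j|$). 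The real work is to verify that the classes $[L(j)]$ in $\kk^*$ satisfy the quantum Serre relations for $(J,\cdot)$, which is what the traceless relation and the $\Z[\zeta_n]$-coefficients are engineered to achieve. Your final paragraph touches on the rank count needed to upgrade the resulting surjection $\A_q(\mathfrak{n})\otimes A\to\kk^*$ to an isomorphism, but treating that as the ``main obstacle'' while leaving the Serre-relation check implicit inverts the difficulty.
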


It is shown in \cite{folding} that classes of self-dual simple objects give a basis of $\kk^*$, which by Theorem \ref{thm:folding} is transported to a basis of $\A_q(\n)$. We denote this basis by $\B^*$. It is also shown that $\B^*$ is a basis of dual canonical type. We call $\B^*$ the dual $p$-canonical basis, where $p$ is the characteristic of $k$.
When $p=0$, $\B^*$ is the usual dual canonical basis, also known as the upper global basis.

%
%
%
%
%
%

\section{Cuspidal modules}

For now we work with the unfolded Dynkin diagram obtained by forgetting the orientation on $Q$. Let $W$ be the corresponding Weyl group, generated by $s_i$ for $i\in I$. Let $\Phi^+$ be the set of positive roots and $\Phi^-$ be the set of negative roots. Fix an element $w\in W$. Define
\[
 \Phi(w)=\{\a\in \Phi^+\mid w(\a)\in \Phi^-\}.
\]
The following important fact is standard

\begin{proposition}\label{prop:standard}
 Let $w=s_{i_1}\cdots s_{i_l}$ be a reduced expression of $w\in W$. For each $1\leq k\leq l$, let $\b_k=s_{i_1}\cdots s_{i_{k-1}}\a_{i_k}$. Then
 \[
  \Phi(w)=\{\b_1,\b_2,\ldots,\b_l\}.
 \]
\end{proposition}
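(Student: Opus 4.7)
The plan is to verify the equality by showing each $\beta_k$ is an inversion of $w$ (or equivalently of $w^{-1}$, depending on convention), that the $\beta_k$ are pairwise distinct, and that their count $l$ matches $|\Phi(w)|$ via the classical identity $|\Phi(w)| = \ell(w)$. The workhorse throughout is the length criterion for a Coxeter group: for $u \in W$ and a simple generator $s_j$, the root $u(\alpha_j)$ is positive if and only if $\ell(us_j) > \ell(u)$.

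First I would show each $\beta_k$ is a positive root. Both prefixes $s_{i_1}\cdots s_{i_{k-1}}$ and $s_{i_1}\cdots s_{i_k}$ of the given reduced expression are themselves reduced, of lengths $k-1$ and $k$, so the length criterion applied with $u = s_{i_1}\cdots s_{i_{k-1}}$ and $\alpha_j = \alpha_{i_k}$ immediately gives $\beta_k \in \Phi^+$.

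Next I would verify that $\beta_k$ is an inversion via the direct computation
\[
 w^{-1}(\beta_k) = (s_{i_l}\cdots s_{i_1})(s_{i_1}\cdots s_{i_{k-1}}\alpha_{i_k}) = s_{i_l}\cdots s_{i_k}(\alpha_{i_k}) = -s_{i_l}\cdots s_{i_{k+1}}(\alpha_{i_k}),
\]
the cancellation coming from $s_{i_j}^2 = e$ for $j = 1,\ldots,k-1$. The right-hand side is negative by the length criterion applied to the reversed tail $s_{i_l}\cdots s_{i_{k+1}}$, which is reduced because its reverse $s_{i_{k+1}}\cdots s_{i_l}$ is a consecutive subword of a reduced expression. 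Hence $\beta_k$ lies in the inversion set appearing in the proposition.

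Finally, distinctness of the $\beta_k$ follows from a standard deletion argument: an equality $\beta_j = \beta_k$ for $j<k$ would give $s_{\beta_j} = s_{\beta_k}$, and expanding both via the identity $s_{u\alpha_j} = u s_j u^{-1}$ and substituting back into the reduced word for $w$ yields an expression of length strictly less than $l$, contradicting reducedness. Thus we obtain $l$ distinct elements inside $\Phi(w)$, and combined with the standard equality $|\Phi(w)| = \ell(w) = l$ (itself proved inductively via the length criterion) they exhaust $\Phi(w)$. The result is a classical fact from Coxeter theory (see, e.g., Bourbaki Ch.~VI, \S 1.6), so the only genuine ``obstacle'' is notational bookkeeping — signs, direction of composition, and the choice of convention for $\Phi(w)$ versus $\Phi(w^{-1})$ — rather than anything conceptually deep.
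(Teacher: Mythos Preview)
The paper supplies no proof: it introduces the proposition with ``The following important fact is standard'' and moves on. Your argument is exactly the standard textbook one (positivity via the length criterion, negativity of $w^{-1}\beta_k$ by cancellation, distinctness by a deletion contradiction, and the count $|\Phi(w)|=\ell(w)$), and it is correct.

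One remark on the convention issue you already flag. With the paper's literal definition $\Phi(w)=\{\alpha\in\Phi^+\mid w\alpha\in\Phi^-\}$, your computation shows $w^{-1}\beta_k<0$, i.e.\ $\beta_k\in\Phi(w^{-1})$ rather than $\Phi(w)$; a quick check in type $A_2$ with $w=s_1s_2$ confirms that $\beta_1=\alpha_1\notin\Phi(w)$ under that convention. You are right that this is purely notational bookkeeping, and indeed the paper itself is loose about the distinction (compare the statement of Lemma~\ref{lem:wconvex}, which uses $\Phi(w^{-1})$, with its proof, which reverts to $\Phi(w)$). So nothing is wrong with your mathematics; just be explicit about which set your argument actually pins down.
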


\begin{definition} \label{def:convex}  \cite[Definition 1.8]{tingleywebster}
 A convex preorder is a pre-order $\succ$ on $\Phi^+$
  such that, 
  \begin{enumerate}
\item \label{cpo2} For any equivalence class $\mathscr{C}$, any $a \in
  \text{span}_{{\Bbb R}_{\geq 0}} \mathscr{C}$ and any non-zero $\,x
  \in \text{span}_{{\Bbb Z}_{\geq 0}} \{ \beta \in \Phi^+ \mid \beta
  \succ \mathscr{C}\}$, we have that 
$a + x \not \in  \text{span}_{{\Bbb Z}_{\geq 0}} \{ \beta \in \Phi^+ \mid \beta \preceq \mathscr{C}\}.$
\item \label{cpo3} For any equivalence class $\mathscr{C}$, any $a \in \text{span}_{{\Bbb R}_{\geq 0}} \mathscr{C}$ and any non-zero $\,x \in \text{span}_{{\Bbb Z}_{\geq 0}} \{ \beta \in \Phi^+ \mid \beta \prec \mathscr{C}\}$, we have that 
$a+ x \not \in  \text{span}_{{\Bbb Z}_{\geq 0}} \{ \beta \in \Phi^+ \mid \beta \succeq \mathscr{C}\}.$
\end{enumerate}
A convex order is a convex pre-order which is a total order on real roots. 
\end{definition}

\begin{example}\label{ex:convexorder}
Let $(V,\leq)$ be a totally ordered $\Q$-vector space. Let $h\map{\Q\Phi}{V}$ be an injective linear transformation. For two positive roots $\a$ and $\b$, say that $\a\prec \b$ if $h(\a)/\hit(\a)<h(\b)/\hit(\b)$ and $\a\preceq \b$ if $h(\a)/\hit(\a)\leq h(\b)/\hit(\b)$. 
This defines a convex order on $\Phi$.
\end{example}

In the above example we can take $V=\R$ with the usual ordering to get the existence of many convex orders.

\begin{lemma}\label{lem:wconvex}
Fix a reduced expression $w=s_{i_1}\cdots s_{i_l}$ and let $\b_i$ be the root defined in Proposition \ref{prop:standard}. Then there exists a convex order $\prec$ such that $\b_1\prec\b_2\prec\cdots \prec \b_n$ and for any $\a\in \Phi^+\setminus \Phi(w\inv)$, $\a\succ\b_l$.
\end{lemma}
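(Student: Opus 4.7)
The plan is to construct the convex order explicitly using Example~\ref{ex:convexorder}: I will choose a totally ordered $\Q$-vector space $(V,\leq)$ and an injective linear map $h:\Q\Phi\to V$ so that the associated order, comparing $h(\alpha)/\hit(\alpha)$, satisfies both the monotonicity of the $\beta_k$ and the separation of $\Phi^+\setminus\Phi(w\inv)$ from $\beta_l$.

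The geometric picture driving the construction is that the roots $\beta_1,\ldots,\beta_l$ are precisely the positive roots whose reflection hyperplanes separate the fundamental Weyl chamber $C$ from $wC$, enumerated in the order in which a generic gallery from $C$ to $wC$ crosses them. So I would pick a point $x_0$ in the interior of $C$, set $y_0=wx_0$, and work with the auxiliary functional $h_0(\alpha)=\langle\alpha,x_0-y_0\rangle$. For each $\beta_k\in\Phi(w)$ one has $\langle\beta_k,y_0\rangle<0<\langle\beta_k,x_0\rangle$, so $h_0(\beta_k)>0$; moreover the crossing order of a generic gallery shows that after a small perturbation $h_0(\beta_k)/\hit(\beta_k)$ can be arranged to be strictly decreasing in $k$ (I will negate at the end to get ``increasing''). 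In contrast, for $\alpha\in\Phi^+\setminus\Phi(w\inv)$, $w\inv\alpha\in\Phi^+$ means $H_\alpha$ is not crossed by the gallery, so $\langle\alpha,x_0\rangle$ and $\langle\alpha,y_0\rangle$ have the same sign (both positive), leaving the sign of $h_0(\alpha)$ unconstrained and potentially misordered relative to the $\beta_k$.

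To overcome this, I would refine $h_0$ by combining it with the height function. Taking $V=\R^2$ with lexicographic order, I would set
\[
 h(\alpha)=\bigl(-h_0(\alpha)+C\cdot\hit(\alpha),\ \langle\alpha,\xi\rangle\bigr)
\]
for a sufficiently large constant $C>0$ and a generic $\xi$ chosen to break ties among the $\beta_k$. The first coordinate, divided by $\hit(\alpha)$, equals $C$ plus $-h_0(\alpha)/\hit(\alpha)$; by construction this quantity takes (normalised) values which are strictly smaller on $\Phi(w)$ than on $\Phi^+\setminus\Phi(w\inv)$, and which sort $\beta_1,\ldots,\beta_l$ in the desired order. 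The second coordinate handles injectivity and internal tie-breaking. Convexity of the resulting preorder is automatic from Example~\ref{ex:convexorder} once $h$ is linear and injective.

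The main obstacle is choosing the functional so that, in the height-normalised comparison used in Example~\ref{ex:convexorder}, the separation between $\Phi(w\inv)$ and its complement is correctly realised and not spoiled by roots of very large height. This is where the height correction $C\cdot\hit(\alpha)$ is crucial: it ensures that $h(\alpha)/\hit(\alpha)$ is dominated by $C$ for both classes of roots, with the relative ordering between them controlled by the small contribution of $h_0/\hit$. In the Kac--Moody case with infinitely many positive roots (and possible imaginary roots handled as equivalence classes of the preorder), verifying that no pathological root of large height inverts the desired inequality is the key technical check; it reduces to the standard observation that $h_0/\hit$ is bounded on $\Phi^+$ because $h_0$ is linear and each root is a bounded combination of simple roots in the appropriate normalisation.
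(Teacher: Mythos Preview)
Your proposal has a genuine gap. The crux of your separation argument is the height-correction term $C\cdot\hit(\alpha)$, but this term is vacuous for the order in Example~\ref{ex:convexorder}: after dividing by $\hit(\alpha)$, the first coordinate of $h(\alpha)/\hit(\alpha)$ is $C - h_0(\alpha)/\hit(\alpha)$, and the additive constant $C$ cancels in every comparison. So the relative order is governed entirely by $-h_0(\alpha)/\hit(\alpha)$, and you have already correctly observed that for $\alpha\in\Phi^+\setminus\Phi(w)$ the sign (and size) of $h_0(\alpha)$ is unconstrained. Nothing in your construction forces $-h_0(\alpha)/\hit(\alpha)$ to exceed $-h_0(\beta_k)/\hit(\beta_k)$ for all $k$. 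The claim ``strictly smaller on $\Phi(w)$ than on $\Phi^+\setminus\Phi(w^{-1})$'' is therefore unsupported.

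There is a second, independent gap. The gallery/crossing argument controls the order in which the hyperplanes $H_{\beta_k}$ meet the segment from $x_0$ to $y_0$; the crossing time for $\beta_k$ is $\langle\beta_k,x_0\rangle/\langle\beta_k,x_0-y_0\rangle$, not $h_0(\beta_k)/\hit(\beta_k)$. So monotonicity of the crossing times does not translate into monotonicity of $h_0(\beta_k)/\hit(\beta_k)$, even after a generic perturbation.

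The paper avoids both difficulties by not attempting to realise the whole order via Example~\ref{ex:convexorder}. Instead it defines $\prec$ piecewise: on $\Phi(w)=\{\beta_1,\ldots,\beta_l\}$ it simply declares $\beta_1\prec\cdots\prec\beta_l$; on the complement it uses an order $\prec'$ coming from a linear functional $h$ chosen so that a hyperplane separates $\Phi(w)$ from $\Phi^+\setminus\Phi(w)$; and it decrees that every root in $\Phi(w)$ precedes every root outside. Convexity is then checked directly, using that $\Phi(w)$ and its complement are each closed under addition (biconvexity) and that the enumeration $\beta_1,\ldots,\beta_l$ coming from a reduced expression is already convex on $\Phi(w)$. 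This sidesteps the need for a single global functional.
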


\begin{proof}
 Choose a generic hyperplane $\h$ in $\R \Phi$ such that the roots in $\Phi(w)$ and the roots in $\Phi^+\setminus \Phi(w)$ are on opposite sides of $\h$. Choose a linear map $h\map{\R\Phi}{\R}$, injective on $\Q\Phi$, such that $\h=\ker h$ and $h(\Phi(w))\subset \R_{<0}$. From $h$, the construction in Example \ref{ex:convexorder} provides us with a convex order $\prec'$ on $\Phi^+$. Now define our desired convex order $\prec$ by
 \begin{itemize}
  \item $\b_1\prec\b_2\prec\cdots \prec \b_n$
  \item If $\a\in\Phi(w)$ and $\b\notin\Phi(w)$ then $\a\prec\b$
  \item If $\a,\b\notin \Phi(w)$ then $\a\prec\b$ if and only if $\a\prec' \b$.
 \end{itemize}
It is straightforward to check that this construction gives a convex order.
\end{proof}

\begin{definition}
 Let $\a\in \Phi^+$. An object $(M,\sigma)$ of $\C_\a$ is $\prec$-cuspidal if whenever $\Res_{\la,\mu}M\neq 0$, we have that $\la$ is a sum of roots less than or equal to $\a$ while $\mu$ is a sum of roots greater than or equal to $\a$ under $\prec$.
\end{definition}

\begin{remark}
 Elsewhere, in  \cite{mcn3,tingleywebster}, this notion is called semicuspidal. Since we will only care about the case where $\a$ is a real root, the distinction between cuspidal and semicuspidal is irrelevant.
\end{remark}

\begin{theorem}\cite{tingleywebster}
 Let $\a$ be a real root and $\prec$ a convex order. Then there exists a unique self-dual simple $\prec$-cuspidal $R(\a)$-module, denoted $L(\a)$.
\end{theorem}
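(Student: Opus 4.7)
My plan is to exhibit $L(\a)$ as the self-dual simple in $\L_\a$ whose image under the categorification isomorphism $\ga^*$ of Theorem \ref{thm:folding} is a specific generalized minor, and then to deduce cuspidality and uniqueness from properties of generalized minors (Lemmas \ref{restrictgm} and \ref{extremalwords}).

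First, since $\a$ is a positive real root, one can find $w \in W$ with $\a \in \Phi(w)$ and a reduced expression $w = s_{i_1}\cdots s_{i_l}$ compatible with $\prec$ in the sense of Lemma \ref{lem:wconvex}, so that $\b_1 \prec \b_2 \prec \cdots \prec \b_l$. Write $\a = \b_k$ and set $\mu = s_{i_1}\cdots s_{i_k}\La_{i_k}$, $\eta = s_{i_1}\cdots s_{i_{k-1}}\La_{i_k}$, so the generalized minor $D := D(\mu,\eta)$ has weight $\eta - \mu = \a$ and, by Theorem \ref{genminors}, lies in $\B^*$. Being a generalized minor, $D$ is bar-invariant, so it is the $\ga^*$-image of a self-dual simple class in $\L_\a$. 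Define $L(\a)$ to be this self-dual simple object.

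For cuspidality, suppose $\Res_{\la,\nu} L(\a) \neq 0$. By Theorem \ref{thm:folding}(3) this means $r_{\la,\nu}(D) \neq 0$. Iterating the coproduct and applying Lemma \ref{restrictgm} along the chain of extremal weights between $\mu$ and $\eta$ in the $W$-orbit of $\La_{i_k}$ shows that every nonzero component of the iterated coproduct of $D$ is a tensor of generalized minors whose consecutive weight differences belong to $\{\b_1, \ldots, \b_l\}$. By the construction of the reduced expression, the first $k-1$ such differences are roots $\prec \a$ and the remaining ones are roots $\succ \a$, whence $\la$ is a sum of roots $\preceq \a$ and $\nu$ is a sum of roots $\succeq \a$. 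For uniqueness, let $L'$ be any other self-dual simple $\prec$-cuspidal module in $\L_\a$ and set $b^* = \ga^*([L']) \in \B^*$. Cuspidality constrains the shuffle support of $b^*$: using an enumeration of the simple-root indices compatible with the ordering dictated by $\prec$, the extremal word of $b^*$ must coincide with that of $D$. Lemma \ref{extremalwords} then forces $b^* = q^N D$ for some $N \in \Z$, and bar-invariance of both sides forces $N = 0$, giving $L' \cong L(\a)$.

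The main obstacle will be the cuspidality step: one must show that every nonzero term in the iterated coproduct of $D(\mu,\eta)$ corresponds to an honest chain of intermediate extremal weights in $V(\La_{i_k})$ sitting between $\mu$ and $\eta$, so Lemma \ref{restrictgm} applies and consecutive weight differences are controlled by the chosen reduced expression. This requires combining the convexity axioms of $\prec$ with a careful analysis of which weight decompositions occur in $V(\La_{i_k})$, to rule out coproduct components that would mix roots from either side of $\a$ in $\prec$.
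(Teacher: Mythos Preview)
The paper does not prove this theorem; it is quoted from \cite{tingleywebster} as an input. In particular, Theorem \ref{cuspgm} immediately afterwards uses the Tingley--Webster classification of simples by semicuspidal decompositions to identify $L(\b_k)$ with a generalised minor. The logical direction is \emph{from} existence and uniqueness of $L(\a)$ \emph{to} the identification $[L(\b_k)]=D(\mu,\eta)$, not the reverse.

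Your proposal attempts to run this backwards, and there are genuine gaps. First, you invoke Lemma \ref{lem:wconvex} in the wrong direction: it produces a convex order from a reduced expression, whereas you need, given $\prec$ and $\a$, a reduced expression whose induced order agrees with $\prec$ on an initial segment containing $\a$. That converse is true but is a separate statement requiring its own argument. Second, and more seriously, your cuspidality step does not follow from Lemma \ref{restrictgm}: that lemma evaluates $r_{\mu_1-\mu_2,\ldots}(D)$ only along chains of \emph{extremal} weights, and says nothing about $r_{\la,\nu}(D)$ for an arbitrary decomposition $\a=\la+\nu$. Proving cuspidality of the simple categorifying $D$ requires controlling \emph{all} such components, and the shortcut used in the proof of Theorem \ref{cuspgm} (checking only $r_{\b_l,\b_k-\b_l}(D)=0$ for $l<k$) is sufficient there precisely because the Tingley--Webster classification is already assumed. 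You acknowledge this obstacle but do not overcome it. Finally, the uniqueness argument via extremal words is too thin: concluding that any cuspidal simple at $\a$ has the same extremal word as $D$ is essentially the Lyndon-word analysis carried out in \cite{tingleywebster,kleshchev}, and Lemma \ref{extremalwords} by itself only tells you the coefficient of an extremal word once you know what that word is.
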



The following theorem is \cite[Proposition 7.4]{glsq1} together with \cite[Theorem 9.1]{mcn3}. But we 
will give a direct proof.

\begin{theorem}\label{cuspgm}
Let $s_{i_1}s_{i_2}\cdots s_{i_l}$ be a reduced expression for $w\in W$ and let $\prec$ be a convex order constructed from this reduced expression as in Lemma \ref{lem:wconvex}. For each $k$ with $1\leq k\leq l$, 
let $\b_k=s_{i_1}\cdots s_{i_{k-1}}\a_{i_k}$ and let $L(\b_k)$ be the corresponding cuspidal $R(\b_k)$-module. Then
\[
  [L(\b_k)]=D(s_{i_1}\cdots s_{i_{k}}\omega_{i_k},s_{i_1}\cdots s_{i_{k-1}}\omega_{i_k}),
 \]
 the identity taking place in $\A_q(\n)$, identified with the Grothendieck group via Theorem \ref{thm:folding}.
\end{theorem}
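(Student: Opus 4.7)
Write $\mu:=s_{i_1}\cdots s_{i_{k-1}}\omega_{i_k}$ and $\nu:=s_{i_1}\cdots s_{i_k}\omega_{i_k}$, so that $\mu-\nu = s_{i_1}\cdots s_{i_{k-1}}(\omega_{i_k}-s_{i_k}\omega_{i_k})=\beta_k$ and $\nu<\mu$ in $W\omega_{i_k}$. The plan is to identify both $D(\nu,\mu)$ and $[L(\beta_k)]$ as elements of $\B^*$ of the same weight $\beta_k$ and then distinguish them by the cuspidality property, whose uniqueness determines $L(\beta_k)$ among self-dual simples.

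First I would note that $D(\nu,\mu)\in\B^*$ by Theorem \ref{genminors} (since $\nu\leq\mu$), and that $[L(\beta_k)]\in\B^*$ by the construction of $\B^*$ via self-dual simples from Theorem \ref{thm:folding}. Hence $D(\nu,\mu)=[M]$ for some self-dual simple $M$ of weight $\beta_k$, and the theorem reduces to showing $M\cong L(\beta_k)$. Since $L(\beta_k)$ is the unique $\prec$-cuspidal self-dual simple of weight $\beta_k$, it suffices to prove that $M$ is $\prec$-cuspidal. Via Theorem \ref{thm:folding}, this is equivalent to showing that for every decomposition $\beta_k=\lambda+\gamma$ with $\lambda,\gamma\in\N J\setminus\{0\}$, the coproduct $r_{\lambda,\gamma}(D(\nu,\mu))$ vanishes unless $\lambda$ is a non-negative integer combination of roots $\preceq\beta_k$ and $\gamma$ is a non-negative integer combination of roots $\succeq\beta_k$.

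The central computation pairs $r_{\lambda,\gamma}(D(\nu,\mu))$ against $x\otimes y\in U_q(\n)_\lambda\otimes U_q(\n)_\gamma$ to get $(xy\,v_\nu,v_\mu)$. Whenever the decomposition factors through an intermediate extremal weight $\mu'\in W\omega_{i_k}$ with $\nu<\mu'<\mu$ and $\mu-\mu'=\lambda$, Lemma \ref{restrictgm} identifies the restriction as $D(\nu,\mu')\otimes D(\mu',\mu)$, a tensor product of generalised minors. The weights $\mu'-\nu$ and $\mu-\mu'$ of these factors are then positive roots obtained by reflecting $\omega_{i_k}$ along partial subwords of $s_{i_1}\cdots s_{i_k}$, and by the construction of $\prec$ in Lemma \ref{lem:wconvex} (where $\beta_1\prec\cdots\prec\beta_l$ and roots outside $\Phi(w)$ are the largest) these roots lie in the required cones. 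Iterating Lemma \ref{restrictgm} handles all decompositions that factor through chains of intermediate extremal weights.

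The hard part will be showing that when $\beta_k=\lambda+\gamma$ does \emph{not} arise from an intermediate extremal weight of $V(\omega_{i_k})$, the pairing $(xy\,v_\nu,v_\mu)$ nevertheless vanishes. Here I would argue as follows: either $V(\omega_{i_k})_{\nu+\gamma}=0$, in which case $y\,v_\nu=0$ and we are done, or $V(\omega_{i_k})_{\nu+\gamma}\neq 0$ but every path from $v_\nu$ to $v_\mu$ through this weight space in $V(\omega_{i_k})$ may be analysed using the explicit expressions $v_\mu=F_{i_1}^{(c_1)}\cdots F_{i_{k-1}}^{(c_{k-1})}v_{\omega_{i_k}}$ and $v_\nu=F_{i_1}^{(c_1)}\cdots F_{i_k}^{(c_k)}v_{\omega_{i_k}}$. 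Using the antiautomorphism $\phi$ to rewrite $(xy\,v_\nu,v_\mu)=(y\,v_\nu,\phi(x)v_\mu)$ and expanding via commutation past these divided-power $F$'s, the nonvanishing of the pairing forces the intermediate weight $\nu+\gamma$ to be \emph{extremal} (i.e., the existence of some $\mu'$ as above), returning us to the setting controlled by Lemma \ref{restrictgm} and completing the cuspidality check, hence the identification $D(\nu,\mu)=[L(\beta_k)]$.
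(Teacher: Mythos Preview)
Your overall strategy---observe that $D:=D(\nu,\mu)$ lies in $\B^*$, hence is the class of some self-dual simple $M$ of weight $\beta_k$, and then argue that $M$ is $\prec$-cuspidal---is exactly how the paper begins. But the way you propose to verify cuspidality diverges from the paper and contains a genuine gap.

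The paper does \emph{not} try to check the cuspidality condition for every decomposition $\beta_k=\lambda+\gamma$. Instead it invokes the Tingley--Webster classification of simples by semicuspidal decompositions: for a simple of weight a real root $\beta_k$, cuspidality holds as soon as $r_{\beta_l,\beta_k-\beta_l}(D)=0$ for every $l<k$. This reduces the problem to finitely many very specific restrictions. Each of these is then shown to vanish by a weight argument in $V(\omega_{i_k})$: one checks that the relevant intermediate weight $s_{i_1}\cdots s_{i_k}\omega_{i_k}-\beta_l$ is simply not a weight of $V(\omega_{i_k})$, which in turn follows (via an $\mathfrak{sl}_2$-string argument, $\beta_l$ being real) from the inequality $(s_{i_1}\cdots s_{i_k}\omega_{i_k},\beta_l)\leq 0$. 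Unwinding with $W$-invariance, this inequality amounts to $s_{i_{k-1}}\cdots s_{i_{l+1}}\alpha_{i_l}$ being a positive root, which holds because $s_{i_{k-1}}\cdots s_{i_l}$ is reduced. That single computation is what does all the work, and it is absent from your proposal.

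Your proposal instead attempts to treat all decompositions at once, splitting on whether the intermediate weight $\nu+\gamma$ is extremal. Both halves have problems. In the extremal case you assert that $\mu'-\nu$ and $\mu-\mu'$ are ``positive roots obtained by reflecting $\omega_{i_k}$ along partial subwords'' and hence sit in the correct $\prec$-cones; but an arbitrary extremal weight $\mu'$ between $\nu$ and $\mu$ need not arise from a subword of your fixed reduced expression, and these differences are generally sums of roots rather than single roots, so the step does not go through as stated. In the non-extremal case, your claim that ``nonvanishing of the pairing forces the intermediate weight $\nu+\gamma$ to be extremal'' is unsupported: there is no reason a matrix coefficient $(xy\,v_\nu,v_\mu)$ must factor through an extremal weight space, and fundamental representations outside the minuscule case have plenty of non-extremal weights. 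Without the Tingley--Webster reduction and the pairing inequality above, the cuspidality check remains incomplete.
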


\begin{proof}
Write $D$ for $D(s_{i_1}\cdots s_{i_{k}}\omega_{i_k},s_{i_1}\cdots s_{i_{k-1}}\omega_{i_k})$. Note that $\b_k=s_{i_1}\cdots s_{i_{k-1}}\omega_{i_k}-s_{i_1}\cdots s_{i_{k}}\omega_{i_k}$.
By Theorem \ref{genminors}, $D\in \B^*$ so by \cite[Theorem 12.7]{folding}, $D$ is the class of a self-dual simple module of $R(\b_k)$. 
Consider the classification of semicuspidal modules in terms of semicuspidal decompositions from \cite{tingleywebster}.
Since there is a unique irreducible cuspidal representation of $R(\b_k)$, it suffices to show that
\[
 r_{\b_l,\b_k-\b_l}(D)=0
\]
for all $l<k$.

Thus it suffices to show that $s_{i_1}\cdots s_{i_{k}}\omega_{i_k}-\b_l$ is not a weight in $V(\omega_{i_k})$. Since $\b_l$ is a real root, it suffices to show
\[
 (s_{i_1}\cdots s_{i_{k}}\omega_{i_k},\b_l)\leq 0.
\]
By the Weyl group invariance of the pairing $(\cdot,\cdot)$ and the fact that $s_{i_l}(\a_{i_l})=-\a_{i_l}$, this is equivalent to
\[
 (\omega_{i_k},-s_{i_{k-1}}\cdots s_{i_{l+1}}\a_{i_l})\leq 0.
\]
Since $s_{i_{k-1}}\cdots s_{i_{l}}$ is a reduced expression, $s_{i_{k-1}}\cdots s_{i_{l+1}}\a_{i_l}$ is a positive root, which thus has a nonnegative pairing with $\omega_{i_k}$, completing the proof.
\end{proof}

%
%
%
%
%
%
%

\section{The quantum unipotent ring and its categorification}\label{qur}

Fix $w\in W$. Choose a reduced expression for $w$. This determines an enumeration of $\Phi(w)$ by $\b_1,\ldots,\b_{l}$ and some dual PBW vectors $E_{\b_i}^*\in \A_q(\n)_{\b_i}$. We will not give the usual definition of these in terms of the braid group action here, but will note that by Theorem \ref{cuspgm} and \cite[Proposition 7.4]{gls}, $E_\b^*=[L(\b)]$, so it can be defined as a generalised minor if desired.

Let $\A_q(\n(w))$ be the $\Z[q,q\inv]$-subring of $\A_q(\n)$ generated by $E_{\b_1}^*,\ldots,\E_{\b_l}^*$. It is known that this does not depend on the choice of a reduced decomposition.

Let $\pi=(\pi_1,\ldots,\pi_l)$ be a sequence of $l$ natural numbers. Associated to $\pi$ is the element
\[
 E_\pi^*=(E_{\b_1}^*)^{\pi_1} \cdots (E_{\b_l}^*)^{\pi_l}.
\]
Then this collection ${\E_\pi^*}$ is a $\Z[q,q\inv]$-basis of $\A_q(\n(w))$.

\begin{definition}
 Let $\C_w(\nu)$ be the full subcategory of finite dimensional $R(\nu)$-modules such that whenever $\Res_{\la,\mu}M\neq 0$, $\la\in \N\Phi(w)$. Let $\C_w=\cup_{\nu}\C_w(\nu)$.
\end{definition}

Since the restriction functor is exact, $\C_w$ is closed under subquotients and extensions. This implies it is abelian.

\begin{theorem}
The Grothendieck group of the category $\C_\w$ satisfies
 \[
  \bigoplus_{\nu\in\N I} K_0(\C_w(\nu)) \cong \A_q(\n(w))
 \]
\end{theorem}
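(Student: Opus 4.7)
The plan is to exhibit compatible $\Z[q,q^{-1}]$-bases on the two sides. From the discussion immediately preceding the definition of $\C_w(\nu)$, the dual PBW monomials $E_\pi^* = (E_{\b_1}^*)^{\pi_1}\cdots (E_{\b_l}^*)^{\pi_l}$, indexed by $\pi\in\N^l$, form a basis of $\A_q(\n(w))$. On the categorical side I will show that the self-dual simple objects of $\C_w$ are indexed by the same set, and that the isomorphism $\ga^*$ of Theorem \ref{thm:folding} sends them, up to a triangular change of basis, to the $E_\pi^*$.

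First I would check that each cuspidal $L(\b_k)$ lies in $\C_w$: any nonzero restriction $\Res_{\la,\mu} L(\b_k)$ has $\la$ a sum of roots $\preceq \b_k$ by cuspidality, and by the construction of $\prec$ in Lemma \ref{lem:wconvex} every such root lies in $\Phi(w)$. Next I would show that $\C_w$ is closed under the induction product $\circ$: if $M\in\C_w(\la')$ and $N\in\C_w(\mu')$, the Mackey-type filtration expresses $\Res_{\a,\b}(M\circ N)$ as an iterated extension of induced modules built from tensor products of restrictions of $M$ and $N$, and in each piece the $\a$-component splits as a sum of a contribution from $\Res M$ and one from $\Res N$, both of which lie in $\N\Phi(w)$. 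Iterating, the induced products $P_\pi := L(\b_1)^{\circ \pi_1}\circ \cdots \circ L(\b_l)^{\circ \pi_l}$ all lie in $\C_w$, and by Theorem \ref{cuspgm} combined with part (2) of Theorem \ref{thm:folding} their classes satisfy $[P_\pi] = q^{N_\pi} E_\pi^*$ for suitable integers $N_\pi$. Hence $\ga^*$ maps $\bigoplus K_0(\C_w(\nu))$ into $\A_q(\n(w))$.

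For the reverse containment I would classify the self-dual simples of $\C_w$. Given any such $(L,\sigma)$, applying the semicuspidal decomposition of Tingley-Webster (or \cite{mcn3}) with respect to $\prec$ yields cuspidal factors $L(\ga)$ for real positive roots $\ga$. The hypothesis $L\in\C_w$ together with the fact that $\Phi(w)=\{\b_1,\ldots,\b_l\}$ is an initial segment of $\prec$ among positive real roots (as arranged in the proof of Lemma \ref{lem:wconvex}) forces every such $\ga$ to lie in $\Phi(w)$, hence to equal some $\b_k$. Standard KLR arguments then identify $L$ with the unique self-dual simple head of $P_\pi$ for a unique $\pi\in\N^l$, and distinct $\pi$ yield non-isomorphic heads. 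The matrix expressing the classes of these heads in the $[P_\pi]$, equivalently in the $E_\pi^*$, is unitriangular with respect to a natural partial order, so the simple heads form a $\Z[q,q^{-1}]$-basis of $\bigoplus K_0(\C_w(\nu))$ which maps bijectively under $\ga^*$ onto the basis $\{E_\pi^*\}$ of $\A_q(\n(w))$.

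The step I expect to be the main obstacle is transporting the semicuspidal classification to the folded setting $\L_\nu$. Tingley-Webster work with ordinary KLR algebras, whereas here we must reason with $(L,\sigma)$ and trace through the defining relations of $K(\L_\nu)$, in particular the rescaling relation $[(M,\zeta_n\sigma)] = \zeta_n [(M,\sigma)]$ and the traceless relation, to check that non-$a$-invariant unfolded simples contribute zero and that each $a$-invariant one descends to a well-defined class attached to the correct $\pi$. Once this bookkeeping is in place, the rest of the argument reduces to unfolded statements already available in the literature.
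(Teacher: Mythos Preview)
Your overall strategy matches the paper's proof almost exactly: show $\C_w$ contains each cuspidal $L(\b_k)$ and is closed under $\circ$ via the Mackey filtration, hence contains the proper standard modules $P_\pi=L(\b_1)^{\circ\pi_1}\circ\cdots\circ L(\b_l)^{\circ\pi_l}$ whose classes are the $E_\pi^*$; then use the unitriangularity between the $[P_\pi]$ and the classes of simple modules in $\C_w$ to conclude that the $E_\pi^*$ span (and hence form a basis of) $K_0(\C_w)$. The paper phrases the second half a bit more tersely---rather than explicitly classifying simples it simply invokes that simples give a basis of $K_0$ and that \cite[Theorem~10.1(3)]{mcn3} gives a unitriangular change of basis to proper standards---but the content is the same.

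The one genuine confusion in your proposal is the final paragraph. This theorem, and all of \S\ref{qur}, is stated and proved in the \emph{unfolded} setting: $\C_w(\nu)$ is by definition a subcategory of ordinary finite-dimensional $R(\nu)$-modules, the sum is over $\nu\in\N I$ (not $\N J$), and $K_0$ is the ordinary Grothendieck group, not the quotient $K(\L_\nu)$ with its traceless and root-of-unity relations. Consequently the isomorphism you should be invoking is the ordinary Khovanov--Lauda categorification isomorphism, not the folded map $\ga^*$ of Theorem~\ref{thm:folding}, and the ``main obstacle'' you anticipate---transporting the Tingley--Webster semicuspidal classification to pairs $(L,\sigma)$ and worrying about traceless contributions---simply does not arise here. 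Once you drop that paragraph and replace the reference to $\ga^*$ by its unfolded analogue, your argument is complete and essentially identical to the paper's.
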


\begin{proof}
 The category $\C_w$ is closed under the 
 induction product by the Mackey filtration \cite[Proposition 2.18]{khovanovlauda}. It also contains the modules $L(\b_k)$ since they are cuspidal. Therefore we have
 \[
  \bigoplus_{\nu\in\N I} K_0(\C_w(\nu)) \supset
  \A_q(\n(w))
 \]
To complete the proof, it will suffice to show that the dual PBW basis vectors $E_{\pi}^*$ span $K_0(\C_w)$, since they form a basis of $\A_q(\n(w))$.

The dual PBW basis vector $E_{\pi}^*$ is the class of the proper standard module $\overline{\Delta}(\pi)=L(\b_1)^{\circ \pi_1}\circ \cdots\circ L(\b_l)^{\circ \pi_l}$. By \cite[Theorem 10.1(3)]{mcn3} (although that paper is about affine Cartan data, the argument works in all types), the change of basis matrix between the proper standard modules and the simple modules is unitriangular. Since the classes of the simple modules in $\C_w$ form a basis of its Grothendieck group, the same is thus true for the classes of the proper standard modules, completing the proof.
\end{proof}

\begin{remark}
The above also shows that our category $\C_w$ is the same as the one with the same name in \cite{kkkocombined}, since both are Serre subcategories.
\end{remark}

\section{$R$-matrices}

Since the polynomials $Q_{i,j}(u,v)$ appearing in the definition of the KLR algebras are all polynomials in $u-v$, the construction of $R$-matrices for KLR algebras in \cite{kkk} applies.

Thus for every pair of modules $X$ and $Y$, there is a nonzero morphism:
\[
 r_{X,Y}\map{X}{Y}.
\]

Given any three modules $X$, $Y$, and $Z$, these $R$
-matrices satisfy the Yang-Baxter equation
\begin{equation}\label{ybe}
 (r_{Y,Z}\circ \id_X)(\id_Y\circ r_{X,Z})(r_{X,Y}\circ \id_Z)=(\id_Z\circ r_{X,Y})(r_{X,Z}\circ \id_Y)(\id_X\circ r_{Y,Z})
\end{equation}
as well as the identity
\begin{equation}\label{yb2}
 (r_{Y\circ X,Z})(r_{X,Y}\circ \id_Z)=(\id_Z\circ r_{X,Y})(r_{X\circ Y,Z}).
\end{equation}

The $R$-matrices also behave nicely with respect to the automorphism $a$, namely we have the identity
\[
r_{a^*X,a^*Y}=a^*r_{X,Y}
 \]

\begin{lemma}
Let $X$ and $Y$ be modules such that $X\circ Y$ is simple. Then $X\circ Y\cong Y\circ X$ with a pair of inverse isomorphisms being given by $r_{X,Y}$ and $r_{Y,X}$.
\end{lemma}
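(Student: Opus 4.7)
My plan is to argue in three stages that simplicity of $X\circ Y$ forces both $r_{X,Y}$ and $r_{Y,X}$ to be isomorphisms which, after an appropriate normalization, are mutually inverse. First, since $r_{X,Y}\map{X\circ Y}{Y\circ X}$ is nonzero and $X\circ Y$ is simple, $\ker(r_{X,Y})$ is a proper submodule of a simple module and hence vanishes, so $r_{X,Y}$ is injective. Dually, the image of the nonzero map $r_{Y,X}\map{Y\circ X}{X\circ Y}$ is a nonzero submodule of the simple $X\circ Y$ and therefore equals all of $X\circ Y$, so $r_{Y,X}$ is surjective.

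Second, I would invoke the standard dimension identity that, as ungraded $k$-vector spaces, $\dim(X\circ Y)=\dim(Y\circ X)=\binom{|\la|+|\mu|}{|\la|}\dim X\cdot\dim Y$, which follows from the basis of $R(\la+\mu)$ as a right $R(\la)\otimes R(\mu)$-module indexed by coset representatives of $S_{|\la|+|\mu|}/(S_{|\la|}\times S_{|\mu|})$ (the Mackey-style PBW-type basis). Combined with the first step, the injective $r_{X,Y}$ between equal-dimensional finite-dimensional spaces is a bijection, and hence an isomorphism in $\C_{\la+\mu}$; symmetrically $r_{Y,X}$ is an isomorphism.

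Third, the composition $r_{Y,X}\circ r_{X,Y}\in\End(X\circ Y)$ is a composition of isomorphisms and hence nonzero. By the graded Schur lemma for simple objects in the category $\L_{\la+\mu}$, the endomorphism ring of the simple module $X\circ Y$ is spanned by the identity in degree zero, so this composition equals $c\cdot\id_{X\circ Y}$ for some nonzero scalar $c\in k$. Rescaling $r_{Y,X}$ by $c^{-1}$ then yields a pair of mutually inverse isomorphisms, as required.

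I expect the main obstacle to be the third step: one must verify a graded Schur lemma in the folded category $\L_{\la+\mu}$, specifically that the endomorphism ring of a simple object contains only scalars in the precise degree where $r_{Y,X}\circ r_{X,Y}$ sits. In the unfolded KLR case this is standard, but in the folded setting it is a structural property that must be extracted from the theory developed in \cite{folding}; the first two stages are routine consequences of simplicity and the induction dimension count.
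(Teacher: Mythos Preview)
Your first two stages are correct and give a valid alternative to the paper's route: where the paper first argues that $Y\circ X$ is also simple (via commutativity of the Grothendieck ring at $q=1$) and then applies Schur's lemma to nonzero maps between simples in both directions, you instead combine simplicity of $X\circ Y$ alone with the dimension identity $\dim(X\circ Y)=\dim(Y\circ X)$. Either approach establishes that $r_{X,Y}$ and $r_{Y,X}$ are isomorphisms.

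The gap is in your third stage. The lemma asserts that $r_{X,Y}$ and $r_{Y,X}$ are \emph{literally} inverse to each other, not merely that some rescaling of one inverts the other. Your Schur-lemma argument only yields $r_{Y,X}\circ r_{X,Y}=c\cdot\id$ for some nonzero $c\in k$, and you then propose to rescale; but $r_{X,Y}$ and $r_{Y,X}$ are specific, canonically normalised maps (renormalised $R$-matrices in the sense of \cite{kkk}), and the lemma---as well as its subsequent use in defining the symmetric product $X\odot Y$ as a direct limit over precisely these two maps---requires $c=1$ on the nose. The paper obtains this by appealing to the $R$-matrices with spectral parameters: by \cite[Lemma 1.3.1(vi)]{kkk} one has $R_{Y,X}R_{X,Y}(v\otimes w)=(z-w)^t(v\otimes w)$ for $v,w$ of highest degree, and since $r_{Y,X}r_{X,Y}=(z-w)^{-s}R_{Y,X}R_{X,Y}|_{z=w=0}$ is an isomorphism one must have $s=t$, whence $r_{Y,X}r_{X,Y}$ fixes $v\otimes w$ and is therefore exactly the identity. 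This normalisation is not recoverable from Schur's lemma alone.

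Finally, your worry about the folded category is misplaced: this lemma and its proof live entirely at the level of ordinary KLR modules, before any automorphism or folding enters. The only Schur lemma needed is the standard one for simple graded $R(\nu)$-modules over an algebraically closed field.
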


\begin{proof}
When $q$ is specialised to 1, the Grothendieck group becomes commutative. Therefore, at $q=1$, $[Y\circ X]=[Y][X]=[X][Y]=[X\circ Y]$. So $[Y\circ X]$ has the same class as a simple module at $q=1$, which corresponds to forgetting the grading. Therefore $Y\circ X$ is simple.

 The $R$-matrices $r_{X,Y}$ and $r_{Y,X}$ must be isomorphisms by virtue of being nonzero maps between simple modules. 
 By \cite[Lemma 1.3.1(vi)]{kkk}, the $R$-matrices with spectral parameters satisfy $R_{Y,X}R_{X,Y}(v\otimes w)=(z-w)^t v\otimes w$ for vectors $v$ and $w$ of highest degree in $X$ and $Y$ respectively. Since $r_{Y,X}r_{X,Y}=(z-w)^{-s}R_{Y,X}R_{X,Y}|_{z=w=0}$ and is an isomorphism, it must be that $s=t$ and $r_{Y,X}r_{X,Y}$ is the identity (since it is a scalar multiple of the identity and preserves $v\otimes w$).
 Therefore the $R$-matrices $r_{X,Y}$ and $r_{Y,X}$ are inverses of each other.
\end{proof}

For such $X$ and $Y$, there are integers $\La(X,Y)$ and $\La(Y,X)$ such that
the modules $q^{\La(X,Y)}X\circ Y$ and $q^{\La(Y,X)}Y\circ X$ are self-dual simple modules (this is denoted $\tilde{\La}$ in \cite{kkkocombined}). Now consider the diagram
\[
q^{\La(X,Y)}X\circ Y \mathrel{\substack{r_{X,Y}\\\myrightleftarrows\\r_{Y,X}}}  q^{\La(Y,X)} Y\circ X.
\]
 We define $X\odot Y$ to be the direct limit of this diagram, it is thus a self-dual irreducible module which is canonically isomorphic to both $X\circ Y$ and $Y\circ X$ up to grading shift.

Now given $(X,\sigma)\in \C_\la$ and $(Y,\tau)\in\C_\mu$ with $X$ and $Y$ as above, the following diagram commutes

\centerline{\xymatrix{
&  q^{\La(X,Y)}X\circ Y \ar@/^0.3pc/@[red][r]^{r_{X,Y}} 
& q^{\La(Y,X)}Y\circ X  \ar@/^0.3pc/@[red][l]^{r_{Y,X}} 
  \\
& q^{\La(X,Y)}a^*(X\circ Y) \ar@/^0.3pc/@[red][r]^{a^*r_{X,Y}}\ar[u]^{\sigma\circ\tau} 
  & q^{\La(Y,X)}a^*(Y\circ X)\ar@/^0.3pc/@[red][l]^{a^*r_{Y,X}}  \ar[u]_{\tau\circ\sigma}
}}

so there is a canonical element $(X\odot Y, \sigma\odot \tau)$ 
in $\C_{\la+\mu}$ which does not depend on the order of the factors up to canonical isomorphism.

Now suppose we have a family of self-dual simple modules $X_1,\ldots, X_n$ such that $X_i\circ X_j$ is simple for all $i,j$. Then we define $\sodot_{i=1}^n X_i$ inductively by
\[
 \sodot_{i=1}^n X_i = \left(\sodot_{i=1}^{n-1} X_i\right) \odot X_n.
\]
Via the $R$-matrices, using (\ref{ybe}) and (\ref{yb2}), this module is canonically isomorphic to a grading shift of $X_{\sigma(1)}\circ\cdots\circ X_{\sigma(n)}$ for any permutation $\sigma\in S_n$. 

If in addition there are  isomorphisms $\sigma_i\map{a^*X_i}{X_i}$ satisfying (\ref{atonis1}), then by iterating the case of two modules, we obtain an object 
\[
 \left(\sodot_{i=1}^n X_i,\sodot_{i=1}^n \sigma_i\right)
\]
which does not depend on the order of the factors up to canonical isomorphism.

 \section{Reduction modulo $p$}

We shall need to make some arguments involving reduction modulo $p$. For this we need a $p$-modular system. Since we can work over any field with $n$ $n$-th roots of unity, it is easy to find such a system. For example, we can take $F=\Q(\zeta_n)$, $\mathcal{O}=\Z[\zeta_n]$ and let $\pi$ be a place of $F$ over $p$ (recall that we assume $p$ does not divide $n$). Then $(F_\pi,\mathcal{O}_\pi,\mathbb{F}_p[\zeta_n])$ is a $p$-modular system. The KLR algebras are known to be free over $\Z$ \cite{maksimau}, so the standard theory of reduction modulo $p$ can be applied.

The following diagram of isomorphisms commutes, where $d$ is the decomposition map and the subscripts on the $\kk^*$s and $\ga^*$s refer to the characteristic of the field $k$ used to define them.

\centerline{\xymatrix{
&  {\k^*_0}  \ar[rr]^d\ar[dr]_{\ga_0^*}
&  
  & \k_p^*\ar[dl]^{\ga_p^*}
 \\
&  
  & \A_q(\mathfrak{n})\otimes A
  & 
}}

A module $M$ is said to be \emph{real} if $M\circ M$ is irreducible.

\begin{proposition}\label{mexists}
 Let $\la\in P^+$ and $\mu\leq\eta$ two elements of $W\la$. Then there exists a real self-dual simple $R(\eta-\mu)$-module $M(\eta,\mu)$ satisfying
\[
 \ga^*([M(\mu,\eta)])=D(\mu,\eta).
\]
Furthermore, the reduction of $M(\mu,\eta)$ modulo any prime remains irreducible.
\end{proposition}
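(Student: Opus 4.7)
The plan is to define $M(\mu,\eta)$ as the self-dual simple $R(\eta-\mu)$-module corresponding under $\ga^*$ to the generalised minor $D(\mu,\eta)$, and then to read off both the realness and the mod-$p$ irreducibility from the structural results already assembled. To begin, Theorem \ref{genminors} places $D(\mu,\eta)$ inside the dual $p$-canonical basis $\B^*$. Combined with Theorem \ref{thm:folding} and the statement (recorded just after that theorem) that $\B^*$ is precisely the image under $\ga^*$ of classes of self-dual simple objects, this produces a unique self-dual simple $R(\eta-\mu)$-module, which I take to be $M(\mu,\eta)$.

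For realness, I would compute $[M(\mu,\eta)\circ M(\mu,\eta)] = [M(\mu,\eta)]^2$; its image under $\ga^*$ is $D(\mu,\eta)^2$. By Lemma \ref{squaregm} this equals $q^N D(2\mu,2\eta)$ for some integer $N$ (note $2\mu,2\eta\in W\cdot 2\la$ and $2\mu\leq 2\eta$ since doubling preserves the property of being a nonnegative integer combination of positive roots). Another application of Theorem \ref{genminors} puts $D(2\mu,2\eta)$ in $\B^*$, so it is itself the class of a self-dual simple module $M(2\mu,2\eta)$. Hence $[M(\mu,\eta)\circ M(\mu,\eta)] = q^N[M(2\mu,2\eta)]$. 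Because the classes of (grading-shifted) self-dual simples form a $\Z[q,q\inv]$-basis of the Grothendieck group, equality with a single basis element forces $M(\mu,\eta)\circ M(\mu,\eta)$ to have length one, so it is simple and $M(\mu,\eta)$ is real.

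For the reduction property, I invoke the commutative triangle displayed at the start of Section 9. Writing $M_0$ and $M_p$ for the modules produced by the construction above in characteristics $0$ and $p$ respectively, both satisfy $\ga_0^*([M_0])=\ga_p^*([M_p])=D(\mu,\eta)$ (the element $D(\mu,\eta)$ is defined independently of the characteristic). Commutativity forces $d([M_0])=[M_p]$. Since the decomposition map $d$ sends the class of $M_0$ to the class in $\kk_p^*$ of its reduction $\overline{M_0}$, and since $M_p$ is simple, $\overline{M_0}$ has $M_p$ as its unique composition factor with multiplicity one; hence $\overline{M_0}\cong M_p$ is irreducible.

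The proof is essentially bookkeeping on top of the earlier results, so no step is really an obstacle. The one subtlety to keep in mind is the grading shift $q^N$ appearing in Lemma \ref{squaregm}: one must use that a grading shift of a self-dual simple is still simple and that the classes of grading-shifted self-dual simples form a $\Z[q,q\inv]$-basis in order to conclude that length one is forced, rather than that the composition factors merely match those of a simple.
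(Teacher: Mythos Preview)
Your proof is correct and follows the same route as the paper's own argument: existence from Theorem~\ref{genminors}, realness from Lemma~\ref{squaregm}, and mod-$p$ irreducibility from the commutativity of the decomposition triangle. You have simply unpacked each step---in particular, you make explicit the second appeal to Theorem~\ref{genminors} (for $D(2\mu,2\eta)$) and the passage from ``class equals a single shifted simple'' to ``length one'', both of which the paper leaves implicit.
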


\begin{proof}
 By Theorem \ref{genminors}, $D(\mu,\eta)\in\B^*$. Therefore a self-dual simple $R(\eta-\mu)$-module $M(\eta,\mu)$ must exist satisfying $\ga^*([M(\mu,\eta)])=D(\mu,\eta)$. This module is real because by Lemma \ref{squaregm}, $D(\mu,\eta)^2\in q^\Z \B^*$. It remains irreducible when reduced modulo $p$ because the decomposition map commutes with the isomorphism to $\A_q(\n)$.
\end{proof}

There are fundamentally three places in \cite{kkkocombined} where the assumption that the ground field $k$ has characteristic zero is used. This is in their proof that all generalised minors lie in $\B^*$, as well as in their Theorem 10.3.1 and Proposition 10.3.3. In the rest of this section, we show how to prove these results in arbitrary characteristic, ensuring that all results in \cite{kkkocombined} are valid in arbitrary characteristic.
 
We know that all generalised minors $D(\mu,\zeta)$ lie in $\B^*$ by Theorem \ref{genminors} and \cite[Theorem 12.7]{folding}. The other two results which we need to prove appear as Theorem \ref{4.28} and Corollary \ref{4.30} below.


Given any two modules $M$ and $N$, we define $M\diamond N$ to be the head of $M\circ N$.

First we generalise \cite[Theorem 10.3.1]{kkkocombined} to all characteristics. 

 \begin{theorem}\label{4.28}
  Let $\la\in P^+$ and $\mu_1,\mu_2,\mu_3\in W\la$ such that $\mu_1\preceq \mu_2\preceq\mu_3$.
  Then
  \[
   M(\mu_1,\mu_2)\diamond M(\mu_2,\mu_3)\cong M(\mu_1,\mu_3).
  \]
 \end{theorem}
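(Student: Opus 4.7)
The strategy is to mimic the characteristic-zero proof of \cite[Theorem 10.3.1]{kkkocombined}, but using only tools that we have already made available. The starting point is Lemma \ref{restrictgm}, which gives $r_{\mu_1-\mu_2,\mu_2-\mu_3}(D(\mu_1,\mu_3)) = D(\mu_1,\mu_2)\otimes D(\mu_2,\mu_3)$. Translating to the KLR side via Theorem \ref{thm:folding} and Proposition \ref{mexists}, this identifies $M(\mu_1,\mu_2)\boxtimes M(\mu_2,\mu_3)$ as the ``extremal graded piece'' of $\Res_{\mu_1-\mu_2,\mu_2-\mu_3}M(\mu_1,\mu_3)$ singled out by property (4) of Definition \ref{defn:dualcantype}.

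I would then expand the product $D(\mu_1,\mu_2)\cdot D(\mu_2,\mu_3)$ in the basis $\B^*$ and apply $r_{\mu_1-\mu_2,\mu_2-\mu_3}$ to both sides. Using the compatibility of coproduct and product in $\A_q(\n)$ together with property (4) of Definition \ref{defn:dualcantype}, one argues that $D(\mu_1,\mu_3)$ is the unique basis element whose restriction contributes to the pure tensor $D(\mu_1,\mu_2)\otimes D(\mu_2,\mu_3)$, pinning its coefficient in the expansion down to a power of $q$. Passing back through Theorem \ref{thm:folding}, this shows that $M(\mu_1,\mu_3)$ appears with multiplicity one (up to a specific grading shift) in the composition series of $M(\mu_1,\mu_2)\circ M(\mu_2,\mu_3)$.

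Next, Frobenius reciprocity applied to the appearance of $M(\mu_1,\mu_2)\boxtimes M(\mu_2,\mu_3)$ at the top of $\Res_{\mu_1-\mu_2,\mu_2-\mu_3}M(\mu_1,\mu_3)$ produces a nonzero homomorphism $M(\mu_1,\mu_2)\circ M(\mu_2,\mu_3)\to M(\mu_1,\mu_3)$, exhibiting $M(\mu_1,\mu_3)$ as a simple quotient. Since $M(\mu_1,\mu_2)$ is real by Proposition \ref{mexists}, the standard $R$-matrix machinery of \cite{kkk} recalled in the preceding section guarantees that $M(\mu_1,\mu_2)\circ M(\mu_2,\mu_3)$ has a unique simple head. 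Combining these two facts forces that head to be $M(\mu_1,\mu_3)$, which by definition is $M(\mu_1,\mu_2)\diamond M(\mu_2,\mu_3)$.

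The main obstacle I expect is the second step: showing that the coefficient of $D(\mu_1,\mu_3)$ in $D(\mu_1,\mu_2)\cdot D(\mu_2,\mu_3)$ is genuinely a single power of $q$. Because the coproduct of a product is a sum over all ``Hall-type'' splittings, \emph{a priori} several basis elements could contribute to the diagonal restriction component; ruling these out requires simultaneously invoking Lemma \ref{restrictgm} and the permutation-matrix condition (4) of Definition \ref{defn:dualcantype}. The remaining ingredients---the Frobenius step and the simple-head principle for real modules---are standard, and the characteristic of $k$ plays no essential role thanks to Proposition \ref{mexists}.
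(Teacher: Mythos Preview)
Your core argument is correct and is exactly the paper's: adjunction from the restriction identity gives a nonzero map $M(\mu_1,\mu_2)\circ M(\mu_2,\mu_3)\to M(\mu_1,\mu_3)$, and reality of $M(\mu_1,\mu_2)$ (Proposition~\ref{mexists}) together with \cite[Theorem~3.2]{kkko} forces the head to be simple, hence equal to $M(\mu_1,\mu_3)$.

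However, your middle paragraph---the expansion of $D(\mu_1,\mu_2)\cdot D(\mu_2,\mu_3)$ in $\B^*$ and the coefficient analysis you flag as ``the main obstacle''---is entirely unnecessary, and the paper omits it. You are underusing Lemma~\ref{restrictgm}: it says that $r_{\mu_1-\mu_2,\mu_2-\mu_3}(D(\mu_1,\mu_3))$ \emph{equals} the pure tensor $D(\mu_1,\mu_2)\otimes D(\mu_2,\mu_3)$, not merely that this tensor appears as an extremal piece. Translated via Theorem~\ref{thm:folding}, this says $[\Res_{\mu_2-\mu_1,\mu_3-\mu_2}M(\mu_1,\mu_3)]=[M(\mu_1,\mu_2)\boxtimes M(\mu_2,\mu_3)]$ in the Grothendieck group; since the right-hand side is the class of a simple module, the restriction \emph{is} that simple module. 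Adjunction then applies immediately, with no need to locate anything ``at the top'' or to control multiplicities in a composition series. The paper's proof is three sentences long for this reason.
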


 \begin{remark}
It is possible to prove this in all characteristics from the characteristic zero result by an argument using reduction modulo $p$. However we give a uniform proof.
\end{remark}

\begin{proof}
By Lemma \ref{restrictgm}, we get
  \[
   \Res_{\mu_2-\mu_1,\mu_3-\mu_2}M(\mu_1,\mu_3)\cong M(\mu_1,\mu_2)\otimes M(\mu_2,\mu_3).
  \]
By adjunction there is thus a nonzero morphism from $M(\mu_1,\mu_2)\circ M(\mu_2,\mu_3)$ to $M(\mu_1,\mu_3)$. Since $M(\mu_1,\mu_2)$ is real, \cite[Theorem 3.2]{kkko} implies that $M(\mu_1,\mu_2)\circ M(\mu_2,\mu_3)$ has an irreducible head, which must thus be $M(\mu_1,\mu_3)$.
 \end{proof}

 \begin{lemma}\label{reducemodp}
 Let $M$ and $N$ be modules for a characteristic zero KLR algebra and let ${M}_p$ and ${N_p}$ denote reductions of $M$ and $N$ modulo the prime $p$. Suppose that ${M}_p\circ {M}_p$ and $N_p$ are simple. Then
\[
 \deg r_{M,N}=\deg r_{{M_p},{N}_p}.
\]
\end{lemma}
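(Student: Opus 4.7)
The renormalized $R$-matrix $r_{M,N}$ arises from the spectral-parameter-deformed intertwiner $R_{M_z, N}\colon M_z \circ N \to N \circ M_z$, which is defined by an explicit formula in the $\tau$-generators of the KLR algebra with $\Z$-coefficients (see \cite[\S 1.3]{kkk}). Writing $R_{M_z, N} = z^s \widetilde{R}$ with $\widetilde{R}|_{z=0}$ nonzero, one sets $r_{M,N} := \widetilde{R}|_{z=0}$, and its degree equals $d_0 - 2s$, where $d_0$ depends only on the underlying weights $\nu_M$ and $\nu_N$. The lemma is therefore equivalent to the equality $s_F = s_p$ of the exponents of $z$ dividing $R_{M_z, N}$ in the two characteristics.

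The plan is to pass to an integral model over a $p$-modular system $(F, \mathcal{O}, \bar k)$, with $\mathcal{O}$-lattices $M^\mathcal{O}$ and $N^\mathcal{O}$ whose generic (resp.\ special) fibres are $M, N$ (resp.\ $M_p, N_p$); such lattices exist because $M_p, N_p$ are simple---which upgrades to $M, N$ being simple by a decomposition-map argument---and the KLR algebras are free over $\Z$. Integrality of the $\tau$-formulas makes $R^\mathcal{O} := R_{M^\mathcal{O}_z, N^\mathcal{O}}$ an $\mathcal{O}[z]$-linear morphism whose base changes recover the intertwiners in both characteristics. Since $z$ is monic, polynomial division yields $R^\mathcal{O} = z^{s_F} R'$ with $R'|_{z=0}$ nonzero over $F$; consequently $s_p \geq s_F$, with equality if and only if the reduction $\overline{R'|_{z=0}}$ is nonzero modulo $p$.

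To establish the reverse inequality $s_p \leq s_F$, I would invoke the Yang--Baxter-type identity $R_{N,M} R_{M,N}(u \otimes v) = (z-w)^t (u \otimes v)$ of \cite[Lemma 1.3.1(vi)]{kkk} at extremal vectors $u \otimes v$. The exponent $t$ arises from an explicit polynomial calculation in $\mathcal{O}[z,w]$ involving the $Q_{i,j}$'s and $y_k$'s, so $t$ is the same in both characteristics. Combined with the reality of $M_p$ (which upgrades to reality of $M$ by the decomposition-map argument), the identification $s = t$ derived in the unlabelled lemma on $R$-matrices in the preceding section---whose argument applies equally in characteristic $0$ and in characteristic $p$---gives $s_F = t = s_p$.

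The main obstacle is the justification of the last step, namely that the $s = t$ argument survives without outright simplicity of $M \circ N$ (the hypotheses give only reality of $M$, simplicity of $N$, and hence simplicity of the head $M \diamond N$). I expect this follows from the fact that the image of $r_{M,N}$ picks out a distinguished primitive vector in $M \diamond N$, whose integral coefficient can be tracked through the $\tau$-polynomial computation on the extremal word, reducing the nonvanishing modulo $p$ to a specific $p$-adic valuation calculation that reality renders trivial.
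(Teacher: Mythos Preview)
Your approach is considerably more elaborate than needed, and the gap you flag at the end is real: the identity $s=t$ from the preceding unlabelled lemma genuinely uses simplicity of $X\circ Y$ (so that $r_{Y,X}r_{X,Y}$ is an isomorphism), and you do not have that here. Your proposed workaround via tracking a primitive vector through the $\tau$-polynomial on an extremal word is vague and not obviously salvageable; in particular there is no reason the relevant coefficient should be a $p$-adic unit without further input.

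The paper's argument bypasses all of the spectral-parameter bookkeeping. The key point you are missing is that reality of $M$ (which follows from reality of $M_p$, as you note) together with simplicity of $N$ already forces
\[
\dim_F \Hom(M\circ N,\,N\circ M)=1
\quad\text{and}\quad
\dim_{\bar k}\Hom(M_p\circ N_p,\,N_p\circ M_p)=1
\]
by \cite[Theorem 3.2]{kkko}. Now take any $\mathcal{O}$-form of $r_{M,N}$ and rescale by a power of the uniformiser so that it maps $M^{\mathcal{O}}\circ N^{\mathcal{O}}$ into $N^{\mathcal{O}}\circ M^{\mathcal{O}}$ but not into $\pi\cdot(N^{\mathcal{O}}\circ M^{\mathcal{O}})$. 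Its reduction modulo $p$ is then a \emph{nonzero} element of the one-dimensional space $\Hom(M_p\circ N_p,\,N_p\circ M_p)$, hence a nonzero scalar multiple of $r_{M_p,N_p}$, and therefore has the same degree. No comparison of $s_F$ and $s_p$, and no appeal to the Yang--Baxter computation, is required.
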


 \begin{proof}
Since ${M_p}\circ {M_p}$ is simple, the same is true of $M\circ M$. It is immediate from \cite[Theorem 3.2]{kkko} that the spaces $\Hom(M\circ N,N\circ M)$ and $\Hom({M_p}\circ {N_p},{N_p}\circ {M_p})$ are one dimensional, spanned by $r_{M,N}$ and $r_{{M_p}, N_p}$ respectively. The morphism $r_{M,N}$ can be reduced modulo $p$ to give a nonzero morphism in $\Hom({M_p}\circ {N_p},{N_p}\circ {M_p})$ of the same degree. By the one-dimensionality of this homomorphism space, we have our desired result.
\end{proof}

As a Corollary, we are able to prove \cite[Proposition 10.3.3]{kkkocombined} in all characteristics:

\begin{corollary}\label{4.30}
 Let $x\in W$ and $i\in I$ be such that $xs_i>x$ in Bruhat order and $x\w_i\neq \w_i$. Let $X=M(xs\w_i,x\w_i)$ and $Y=M(x\w_i,\w_i)$. Then
\[
\deg r_{X,Y}+ \deg r_{Y,X}=2.
\]
\end{corollary}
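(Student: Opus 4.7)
The plan is to deduce the identity from the characteristic-zero case, which is exactly \cite[Proposition 10.3.3]{kkkocombined}, by invoking Lemma \ref{reducemodp} to transport the degrees of the relevant $R$-matrices across the decomposition map.

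First I would produce characteristic-zero lifts. Let $X_0 = M(xs_i\w_i, x\w_i)$ and $Y_0 = M(x\w_i, \w_i)$ be the modules constructed over a characteristic-zero KLR algebra via Proposition \ref{mexists}. That proposition, together with its counterpart in characteristic $p$, guarantees that $X_0$, $Y_0$, $X$ and $Y$ are all real self-dual simple modules, and that the reductions of $X_0$ and $Y_0$ modulo $p$ remain irreducible. Because the commuting diagram immediately preceding Proposition \ref{mexists} shows that the decomposition map is compatible with $\ga^*$, those reductions are forced to coincide with $X$ and $Y$.

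Next I would invoke Lemma \ref{reducemodp} twice. Taking $M = X_0$ (whose reduction $X$ is real, so $X \circ X$ is simple) and $N = Y_0$ (whose reduction $Y$ is simple), the lemma gives $\deg r_{X_0, Y_0} = \deg r_{X, Y}$. Swapping the roles, taking $M = Y_0$ and $N = X_0$, gives $\deg r_{Y_0, X_0} = \deg r_{Y, X}$. Summing these two identities and appealing to the characteristic-zero statement immediately yields $\deg r_{X,Y} + \deg r_{Y,X} = 2$.

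The only genuine content to check is that the hypotheses of Lemma \ref{reducemodp} genuinely apply in both directions, which reduces to the reality of $X_0$ and $Y_0$ (equivalently, of $X$ and $Y$). This is delivered by Proposition \ref{mexists}, which in turn rests on Lemma \ref{squaregm}. Once that is in hand, the remainder of the argument is a bookkeeping exercise carrying the degrees of $R$-matrices through the $p$-modular reduction, and the main obstacle is really just ensuring these compatibilities between characteristic zero and characteristic $p$ are all set up correctly.
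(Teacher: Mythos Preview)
Your proposal is correct and follows essentially the same approach as the paper: reduce to characteristic zero via Proposition~\ref{mexists} and Lemma~\ref{reducemodp}, then invoke \cite[Proposition 10.3.3]{kkkocombined}. The paper's proof is more terse, but your added detail about applying Lemma~\ref{reducemodp} in both directions and checking its hypotheses via the reality of $X$ and $Y$ is exactly what is implicit there.
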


\begin{proof}
By Proposition \ref{mexists} and Lemma \ref{reducemodp}, we reduce ourselves to the case where the ground field $k$ is of characteristic zero, which is \cite[Proposition 10.3.3]{kkkocombined}.
\end{proof}

As a consequence of these results, all results proved in \cite{kkkocombined} are valid over an arbitrary ground field.

\section{Quantum cluster algebras}\label{qcas}

Here we give the definition of a skew-symmetrisable quantum cluster algebra, introduced by Berenstein and Zelevinsky \cite{quantumcluster}. Some of our powers of $q$ are different from those which appear elsewhere in the literature. We make these choices so that we do not have to ever extend scalars to $\Z[q^{ 1/2},q^{-1/2}]$. Specialising $q=1$ recovers the classical notion of a commutative cluster algebra.


Let ${\bf{ex}}\subset S$ be two finite sets. A cluster matrix is an integer matrix $B$ with rows labelled by $S$ and columns labelled by $\bf{ex}$. We call elements of $S$ vertices and elements of $\bf{ex}$ exchangeable vertices. Vertices not in $\bf{ex}$ are called frozen.

Let $\La=(\la_{ij})_{i,j\in S}$ be a skew-symmetric integer matrix. 
We say that the pair $(\La,B)$ is \emph{compatible} if 
\[
 \La B = -2E
\]
where $E=(e_{st})$ is a matrix with $e_{st}\geq 0$ for all $s,t$ and $e_{st}>0$ if and only if $s=t\in{\bf{ex}}$. We write $e_s$ for the integer $e_{ss}$. If $(\La,B)$ is compatible, then the principal part of $B$ is automatically skew-symmetrisable.

The mutation of $(\La,B)$ in the direction $k\in \ex$ is the pair $(\La',B')$ where
\[
 \la'_{st}=\begin{cases}
           \la_{st}&\text{if }s,t\neq k \\
           \sum_i \max(b_{ik},0)\la_{it} &\text{if }s=k\neq t,
          \end{cases}
\]
and
\[
 b'_{ij} = \begin{cases}
            -b_{ij} &\text{if } i=k \text{ or } j=k \\
            b_{ij}+\frac{|b_{ik}|b_{kj}+b_{ik}|b_{ij}|}{2}&\text{otherwise}.
           \end{cases}
\]
It is easily checked that the mutation of a compatible pair is compatible.

Let $Q$ be a lattice and $(\cdot,\cdot)\map{Q\times Q}{\Z}$ a bilinear form. Let $\A$ be a $Q$-graded $\Z[q,q\inv]$-algebra which embeds in a skew-field $\K$. Suppose $\A$ has a grading-preserving bar-involution which satisfies $\overline{q}=q\inv$ and
\[
 \overline{AB} = q^{(a,b)} \overline{B}\cdot\overline{A}
\]
for homogeneous elements $A$ and $B$ of degrees $a$ and $b$ respectively.

Fix a compatible pair $(\La,B)$ and let $\{d_i\}_{i\in S}$ be a family of elements in $Q^+$ such that $\la_{ij}\cong (d_i,d_j)\pmod 2$ for all $i,j\in S$. Let $Y_i\in \A_{d_i}$ for $i\in S$ be a family of elements such that
\[
 Y_i Y_j = q^{\la_{ij}} Y_j Y_i.
\]
We call such a family of elements $\La$-commuting.

Given such a choice of $\La$-commuting elements, make a choice of identification $S\cong\{1,2,\ldots,m\}$. Let $\aa=(a_1,\ldots,a_m)\in \N^m$. Define
\[
 Y^\aa = (q^{1/4})^{(\sum_i a_i d_i,\sum_i a_i d_i)-\sum_ia_i(d_i,d_i)+2\sum_{i>j}a_ia_j\la_{ij}}Y_1^{a_1}Y_2^{a_2}\cdots Y_m^{a_m}.
\]
This is an element of a $\A$ that does not depend on the ordering of the indexing set $S$. The condition $\la_{ij}\cong (d_i,d_j)\pmod 2$ implies that the exponent of $q$ is an integer. If each $Y_i$ is self-dual (i.e. invariant under the bar involution), then so is $Y^\aa$.

The based quantum torus associated with $\La$ is the $\Z[q,q\inv]$-algebra $\mathcal{T}(\La)$ generated by $X_1^{\pm 1},\ldots,X_n^{\pm 1}$ subject to the relations
\[
 X_i X_j = q^{\la_{ij}} X_j X_i.
\]

\begin{definition}
A quantum seed in $\A$ is a triple $(\{Y_s\}_{s\in S},\La,B)$ with $(\La,B)$ a compatible pair and $\{Y_s\}_{s\in S}$ a $\La$-commuting family of self-dual elements of $\A$ such that the induced map from $\mathcal{T}(\La)$ to $\K$ is injective.
\end{definition}

In such a situation, we call the set $\{Y_s\}_{s\in S}$ the cluster, and the $Y_s$ are the cluster variables. If $s\in{\bf{ex}}$ then the corresponding cluster variable is called exchangeable, otherwise frozen. The elements $Y^{\bf a}$ are the quantum cluster monomials corresponding to this seed.

We now define how to mutate a quantum seed at an exchangeable variable.
Fix $s\in \ex$.
Define $a^+(s)_t=\max({b_{ts},0})$ and $a^-(s)_t=\max(-b_{ts},0)$. This defines two sequences $\aa^+(s)$ and $\aa^+(s)$ of integers indexed by $S$.
The mutation variable $Y_s'$ is then defined by
\begin{equation}\label{12.n}
 Y_s {Y_s}' = Y^{\aa^+(s)}+ q^{e_s} Y^{\aa^-(s)}.
\end{equation}

It is easy to check the following
\begin{proposition}
The triple $(\{Y_t\}_{t\in S}\cup \{Y'_s\}\setminus \{Y_s\},\La',B')$ is also a quantum seed in $\K$.
\end{proposition}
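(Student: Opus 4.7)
The plan is to verify, in turn, the three conditions for the triple to be a quantum seed: (i) $(\La', B')$ is compatible; (ii) $\{Y_t\}_{t\neq s}\cup\{Y_s'\}$ is a $\La'$-commuting family of self-dual elements; (iii) the induced map $\mathcal{T}(\La')\to\K$ is injective. Condition (i) is recorded in the paragraph immediately preceding the statement, so I take it as given.

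For (ii), first note that $Y_s'$ is well defined as an element of $\K$: the embedding $\mathcal{T}(\La)\hookrightarrow\K$ renders $Y_s$ invertible, so (\ref{12.n}) determines $Y_s'$ uniquely. The only nontrivial commutation to check is $Y_s' Y_t = q^{\la'_{st}} Y_t Y_s'$ for $t\neq s$. This rests on the identity $Y^{\aa} Y_t = q^{\sum_i a_i \la_{it}} Y_t Y^{\aa}$, immediate from the $\La$-commutation of the original cluster. Specialising to $\aa=\aa^{\pm}(s)$ and using the compatibility $\La B=-2E$, for $t\neq s$ one computes
\[
\sum_i \max(b_{is},0)\la_{it}-\sum_i \max(-b_{is},0)\la_{it}=\sum_i b_{is}\la_{it}=0,
\]
so both monomials on the right of (\ref{12.n}) commute with $Y_t$ with the \emph{same} scalar; hence so does $Y_s Y_s'$. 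Dividing by $Y_s$ (which commutes with $Y_t$ via $\la_{st}$) reads off the advertised power of $q$, matching the mutation formula for $\La'$.

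For self-duality of $Y_s'$, each monomial $Y^{\aa^{\pm}(s)}$ is bar-invariant by the normalising power of $q^{1/4}$ built into $Y^{\aa}$. Applying the bar involution to (\ref{12.n}), using (\ref{barproduct}) to move $\overline{Y_s'}$ across $\overline{Y_s}=Y_s$, and exploiting the computation above to line up the two exponents, one finds that the coefficient $q^{e_s}$ in (\ref{12.n}) is precisely what balances the degree asymmetry between $Y^{\aa^+(s)}$ and $Y^{\aa^-(s)}$, forcing $\overline{Y_s'}=Y_s'$.

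Finally, for (iii), multiplying (\ref{12.n}) by $Y_s^{-1}$ on the left expresses $Y_s'$ as a Laurent polynomial in the original cluster variables inside $\K$. Therefore the new quantum torus $\mathcal{T}(\La')$ embeds into the localisation of $\mathcal{T}(\La)$ at $Y_s$, which in turn embeds into $\K$ by hypothesis. The real content of the proposition is the commutation computation in step (ii); self-duality and injectivity then follow formally.
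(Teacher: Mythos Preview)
The paper does not actually prove this proposition: it is introduced with ``It is easy to check the following'' and no argument is given. So there is no paper proof to compare against; you are supplying the standard Berenstein--Zelevinsky verification, and your outline is essentially correct.

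Two small remarks. First, your computation of the new commutation exponent gives $\la'_{st}=\sum_i\max(b_{is},0)\la_{it}-\la_{st}$, which differs from the displayed formula in the paper by $-\la_{st}$; your derivation is sound, so this appears to be a typo in the paper's statement of $\La'$ rather than an error on your part. Second, your injectivity argument in (iii) is slightly circular as stated: saying ``$\mathcal{T}(\La')$ embeds into the localisation of $\mathcal{T}(\La)$'' presupposes the injectivity you are trying to prove. The clean fix is to use that mutation is an involution: from the reverse exchange relation one recovers $Y_s$ inside the sub-skew-field of $\K$ generated by $\{Y_t\}_{t\neq s}\cup\{Y_s'\}$ and their inverses, so this sub-skew-field contains the image of $\mathcal{T}(\La)$ and hence has the same fraction skew-field; comparing transcendence (or Gelfand--Kirillov) degree then forces the map from $\mathcal{T}(\La')$ to be injective. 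With that adjustment your proof is complete.
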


Now we can define a quantum cluster algebra.

\begin{definition}
 Let $\I$ be a quantum seed in $\A$. The quantum cluster algebra $\A(\I)$ associated to $\I$ is the $\Z[q,q\inv]$-subalgebra of $\K$ generated by all quantum cluster variables in all quantum seeds obtained from $\I$ by all sequences of mutations.
\end{definition}

A quantum cluster algebra will necessarily be a subalgebra of the fraction field of $\mathcal{T}(\La)$.

\section{The initial quiver}\label{subsec}

The quivers that appear from now on will always refer to combinatorial data used in cluster mutation. They bear no relation to the quiver used to define the KLR algebras, which can be safely forgotten about.

We now give a construction of a quantum cluster algebra associated to an element $w\in W$.
The construction will a priori depend on a choice of reduced expression for $w$, but ultimately we will show that this quantum cluster algebra does not depend on this choice.

Let $w=s_{i_1}\ldots s_{i_m}$ be a reduced expression for $w\in W$. To this data, we now define a quiver $Q(i_1,\ldots,i_m)$. Consider an array indexed by $ \{1,\ldots,m\}\times I$. We place a vertex at each point in the array of the form $(t,i_t)$. We place a horizontal arrow from the vertex $(a,i)$ to the vertex $(b,i)$ if $a>b$ and there is no vertex $(c,i)$ with $a>c>b$. The other arrows between rows form a zigzag pattern: We place $-i\cdot j$ arrows from $(a,i)$ to $(b,j)$ if $a<b$ and there is no $(c,j)$ with the properties that (i) $c>b$ and (ii) there is no vertex $(d,i)$ with $c>d>b$. For each $i\in I$, the vertex of the form $(a,i)$ with largest $a$ is decreed to be frozen.

For example suppose that our Cartan matrix is
\[
 \begin{pmatrix}
  2 & -3 & -4 \\
  -3 & 2 & -2 \\
  -4 & -2 & 2
 \end{pmatrix}
\]
and that our reduced word is $s_is_js_is_ks_is_js_is_js_ks_j$, where $i$, $j$ and $k$ are used to label the rows of the Cartan matrix, in that order. Then the quiver $Q(i,j,i,k,i,j,i,j,k,j)$ is
$$
\xymatrix{
1\ar[dr]|3&&3\ar[ddr]|>>>>>>4\ar[ll]&&5\ar[dr]|3\ar[ll]&&{\bf 7}\ar[ll]\ar[drrr]|3 \ar@/^1pc/[rrdd]|4&& \\
&2\ar[drr]|2\ar[urrr]|3&&&&6\ar[ur]|3\ar[llll]&&8\ar[dr]|2\ar[ll]&&\ar[ll]{\bf 10}  \\
&&&4\ar[urrrr]|2\ar@/^0.8pc/[uurrr]|4&&&&&{\bf 9}\ar[ur]|2\ar[lllll]&
}
$$
%
where a number on an arrow means that there are that many arrows between the vertices. Frozen vertices are depicted in bold.

To the vertex $(t,i_t)$, we associate the cluster variable
\begin{equation}\label{eq:defyt}
 Y_t:=D(s_{i_1}\cdots s_{i_t}\w_{i_t},\w_{i_t}).
\end{equation}

These are the cluster variables in an initial cluster in the quantum cluster algebra structure on $\A_q(\n(w))$.
%
%
%
%
%
%
%

\section{Seeds with automorphism}

\begin{definition}\label{def:qmswa}
A quantum monoidal seed with automorphism is a quadruple \newline $\J=(\{M_i\}_{i\in V},Q,a,\{\sigma_i\}_{i\in V})$ where 
\begin{enumerate}
 \item $Q$ is a quiver without loops with vertex set $V=V_{\rm ex}\sqcup V_{\rm fr}$.
\item $a$ is an automorphism of $Q$ preserving the decomposition $V=V_{\rm ex}\sqcup V_{\rm fr}$ such that there are no arrows in $Q$ between any two vertices in the same $a$-orbit.
\item $\{M_i\}_{i\in V}$ is a family of modules such that $M_i\circ M_j$ is simple for all $i,j\in V$.
 \item $\{\sigma_i\}_{i\in V}$ is a family of isomorphisms
\[
 \sigma_i\map{a^*M_i}{M_{ai}}
\] such that the composition
$$\sigma_{a^{n-1}i}\circ a^*\sigma_{a^{n-2}i} \circ \cdots \circ (a^*)^{n-1}\sigma_i$$
is the identity map on $(a^*)^nM_i = M_i = M_{a^n i}$.
\end{enumerate}
\end{definition}

We remark here an important consequence of condition (3) in this definition. It implies that any $M_{i_1}\circ\cdots \circ M_{i_n}$ is simple for any $i_1,\ldots,i_n\in V$. This follows from \cite[Prop 3.2.5]{kkkocombined}.

Let $s$ denote an orbit of $a$ on $V_{\rm ex}$. We define the \emph{mutation} $\mu_s(Q)$ of $Q$ in the direction $s$ to be the quiver obtained by the following combinatorial rule.

\begin{enumerate}
 \item For every $i\in s$ and pair of arrows $j\to i\to k$ in $Q$, we add an arrow $j\to k$.
\item Reverse the direction of any arrow involving a vertex in $s$.
\item If there is a pair of opposing edges $x\to y$ and $y\to x$ in the quiver, delete both. Repeat until no such opposing pair exists.
\end{enumerate}

It is clear that $a$ induces an automorphism of $\mu_s(Q)$. We note that $\mu_s(Q)$ is the quiver obtained by successively mutating $Q$ at each vertex of $s$ under the usual process of quiver mutation.

\begin{definition}
Let $\J=(\{M_i\}_{i\in V},Q,a,\{\sigma_i\}_{i\in V})$ be a quantum monoidal seed with automorphism. A mutation of $\J$ in direction $s$ is a quantum monoidal seed with automorphism of the form
\[
 \mu_s(\J)=(\{M_i\}_{i\in V\setminus s} \cup \{M_i'\}_{i\in s},\mu_s(Q),a,\{\sigma_i\}_{i\in V\setminus s} \cup \{\sigma_i'\}_{i\in s})
\]
where for every $k\in s$, the module $M_k'$ and the automorphism $\sigma_k'\map{a^*M_k'}{M_{ak}'}$ are such that there is the following commutative diagram with exact rows:
\begin{equation}\label{eq:mutation}
  \begin{CD}
   0@>>>a^*(q\sodot_{i\to k}M_i)@>>>a^*(q^{\tilde{\Lambda}(M_k,M_k')}M_k\circ M_k') @>>> a^*\sodot_{k\to i}M_i@>>>0\\
  @. @VV\odot\sigma_i V   @VV\sigma_k\circ\sigma'_k V @VV \odot \sigma_i V\\
   0@>>>q\sodot_{i\to k}M_{ai}@>>>q^{\tilde{\Lambda}(M_k,M_k')}M_{ak}\circ M_{ak}'@>>> \sodot_{k\to i}M_{ai} @>>> 0
  \end{CD}
 \end{equation} where the products are always taken over edges in $Q$.
\end{definition}

Here the integer $\tilde{\Lambda}(M,M')$ is defined by $2\tilde{\La}(M,M')=\nu\cdot\nu'+\deg r_{M,M'}$, where $M$ and $M'$ are modules for $R(\nu)$ and $R(\nu')$. An alternative a postiori formula for these degree shifts that do not depend on degrees of $R$-matrices is given in \cite[Eq (6.3)]{kkkocombined}.
 
It is clear that if a mutation in direction $s$ exists, then it is unique. However the existence of such a mutation is highly nonobvious.


%

\begin{definition}
 Let $\J=(\{M_i\}_{i\in V},Q,a,\{\sigma_i\}_{i\in V})$ be a quantum monoidal seed with automorphism. It is said to be compatible with $\B^*$ if
 \[
  [ (M_i\odot M_{ai}\odot\cdots \odot M_{a^{d-1}i},\sigma_i\odot\sigma_{ai}\odot\cdots\odot \sigma_{a^{d-1}i})]\in \B^*
 \]
for all $i\in V$, where $d$ is the minimal integer such that $a^di=i$.
\end{definition} 

Note that by \cite[Prop 3.2.5]{kkkocombined},
$( M_i\odot M_{ai}\odot\cdots \odot M_{a^{d-1}i},\sigma_i\odot\sigma_{ai}\odot\cdots\odot \sigma_{a^{d-1}i})$ is a self-dual simple element of $\mathcal{C}$, so its class is a priori a root of unity times an element of $\B^\ast$.

\section{The initial seed}\label{sec:initseed}

Recall the setup from \S 5. Thus we have a KLR algebra associated with vertex set $I$, that has a finite order automorphism $a$. Associated to $I$ and $a$, we constructed a Cartan datum $(J,\cdot)$ to which we attach a symmetrisable Kac-Moody algebra $\g$. Let $W$ be the Weyl group of $\g$. Let $W'$ be the Weyl group of the symmetric Cartan datum associated to $I$. Then there is a canonical embedding of $W$ in $W'$ sending each $s_j$ to $\prod_{i\in j} s_i$. Throughout, we use standard notation such as $s_k$ to denote generators of the Weyl group, and $\w_k$ to denote fundamental weights, where the indices may come from either $I$ or $J$, and the location of the index tells us which root system we are considering.

Let $w\in W$. Let $w=s_{j_1}\ldots s_{j_n}$ be a reduced expression for $w$ in $W$. To this data, we will construct in this section a 
quantum monoidal seed with automorphism compatible with $\B^*$. This quantum monoidal seed with automorphism will serve as an initial seed in our categorification of $\A_q(\n(w))$.

From our reduced expression of $w$ in $W$, we obtain a reduced expression for $w\in W'$ by replacing each 
occurrence of $s_j$ by $\prod_{i\in j}s_i$. The order in which this product is written will not matter. Write 
$w=s_{i_1}\cdots s_{i_m}$ for the reduced expression thus obtained.

Let $Q_0$ be the quiver $Q(i_1,\ldots,i_m)$ as defined in \S \ref{subsec}. It is equipped with a decomposition of its vertex set $V$ into exchangeable and frozen vertices and has a canonical automorphism which we also denote by $a$ satisfying condition (2) in Definition \ref{def:qmswa}.

Let $v=(t,i_t)$ be a vertex of $Q$. Define
\[
 M_v=M(s_{i_1}\cdots s_{i_t}\w_{i_t},\w_{i_t}),
\]
the notation being as in Proposition \ref{mexists}.

It is shown in \cite{kkkocombined} that $M_i\circ M_j$ is simple for all $i,j\in V$. Indeed this collection of modules is the same as the monoidal seed constructed in that paper.

A collection of isomorphisms $\sigma_v\map{a^\ast M_v}{M_v}$ is constructed via the following lemma.

%
%
%
%

\begin{lemma}\label{lem:initial}
 Let $w=s_{j_1}\cdots s_{j_N}$ be a reduced expression in $W$. For $i\in j_N$, let $M_i=M(w\w_i,\w_i)$. Then there exist isomorphisms $\sigma_i\map{a^*M_i}{M_{ai}}$ such that
 \begin{enumerate}
  \item  \[\sigma_{a^{n-1}i}\circ a^*\sigma_{a^{n-2}i} \circ \cdots \circ (a^*)^{n-1}\sigma_i\]
is the identity map on $(a^*)^nM_i = M_i = M_{a^n i}$, 
and 
\item \[
[(\sodot_{i\in j_N} M_i,\sodot_{i\in j_N} \sigma_i)] \in \B^*.
\]
 \end{enumerate}
\end{lemma}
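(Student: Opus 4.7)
The plan is to build the $\sigma_i$ in three stages: first establish existence of some family, then rescale to enforce condition (1), and finally deduce condition (2) by invoking the identification from Theorem \ref{thm:folding} of $\B^*$ with classes of self-dual simple $a$-equivariant objects. Since $w \in W$, its image in $W'$ is $a$-invariant, so $a\w_i = \w_{ai}$ and $a(w\w_i) = w\w_{ai}$ for every $i \in j_N$. By Proposition \ref{mexists} the modules $M_i = M(w\w_i,\w_i)$ and $M_{ai} = M(w\w_{ai},\w_{ai})$ are real self-dual simple, and transporting $D(w\w_i,\w_i)$ through the $a$-action on $\A_q(\n)$ via Theorem \ref{thm:folding} gives $[a^*M_i] = D(w\w_{ai},\w_{ai}) = [M_{ai}]$. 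Since self-dual simple modules are classified up to isomorphism by their classes in $\B^*$, we conclude $a^*M_i \cong M_{ai}$, and we fix arbitrary module isomorphisms $\sigma_i : a^*M_i \to M_{ai}$.

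The composition
\[
\Pi := \sigma_{a^{n-1}i} \circ a^*\sigma_{a^{n-2}i} \circ \cdots \circ (a^*)^{n-1}\sigma_i
\]
is an endomorphism of the simple module $M_i$, hence equals $c \cdot \id_{M_i}$ for some $c \in k^\times$. Rescaling a single $\sigma_i$ in each $a$-orbit by $c^{-1}$ forces $\Pi = \id_{M_i}$, which is condition (1).

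With this normalisation in hand, $(N,\tau) := (\sodot_{i \in j_N} M_i,\ \sodot_{i \in j_N} \sigma_i)$ is a well-defined object of $\L_\nu$ for the $a$-fixed degree $\nu = \sum_{i \in j_N}(\w_i - w\w_i) \in \N I$. The underlying module $N$ is self-dual simple, since it is an $\odot$-product of real self-dual simples (using that all $M_i \circ M_j$ are simple, as in the monoidal seed of \cite{kkkocombined}). By Theorem \ref{thm:folding}, $\B^*$ corresponds under $\ga^*$ to the classes of self-dual simple objects of the categories $\L_\nu$, so condition (2) reduces to showing that $(N,\tau)$ is itself self-dual, up to the conventional ambiguity fixed in \cite[\S 7]{folding}. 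Among the $a$-equivariances on $N$, those satisfying (1) form an orbit under scaling by $n$-th roots of unity, within which the self-dual structures form a nonempty subset selected by the conventions of \cite[\S 7]{folding}.

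The hard part is this last step: verifying that condition (1) is compatible with self-duality. Both constraints determine the product of rescaling factors along an $a$-orbit, so one must check they can be simultaneously satisfied and that the resulting pair has class actually in $\B^*$ rather than a root-of-unity multiple thereof. This amounts to tracking how the duality functor $\D$ and the pullback $a^*$ interact with the $\odot$-product, and is ultimately a bookkeeping exercise handled by the structural results and sign conventions of \cite{folding}.
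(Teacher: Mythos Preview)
Your setup through condition (1) is essentially fine: the existence of isomorphisms $a^*M_i\cong M_{ai}$ follows from comparing classes of simples, and rescaling along each orbit forces the cyclic composition to be the identity. The genuine gap is in condition (2), and you correctly flag it as the hard part---but then you do not resolve it. The assertion that this is ``ultimately a bookkeeping exercise handled by the structural results and sign conventions of \cite{folding}'' is not a proof. Concretely: once the $\sigma_i$ satisfy (1), the remaining freedom in $\tau=\sodot_i\sigma_i$ is multiplication by a root of unity, and nothing you have said pins down which root of unity makes $(N,\tau)$ self-dual, let alone shows that the resulting class lands in $\B^*$ rather than a nontrivial multiple of it. The individual $(M_i,\sigma_i)$ are not even objects of the folded category (since $\sigma_i$ lands in $M_{ai}$, not $M_i$), so there is no obvious compatibility between duality and the $\odot$-product to appeal to.

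The paper's proof takes a completely different route and avoids this difficulty. It proceeds by induction on $N$: writing $N_i=M(s_{j_2}\cdots s_{j_N}\omega_i,\omega_i)$ and invoking the inductive hypothesis to get $\tau_i$ with both properties, one uses Theorem~\ref{4.28} to factor $M_i\cong X_i\diamond N_i$ where $X_i=M(w\omega_i,s_{j_2}\cdots s_{j_N}\omega_i)$ lives over a weight supported on the single orbit $j_1$. This forces $X_i$ to be a circle product of one-dimensional simples, so it carries a \emph{canonical} equivariance $x_i$; one then sets $\sigma_i$ to be the map induced by $x_i\circ\tau_i$. Condition (1) is inherited from the $\tau_i$, and condition (2) follows because
\[
\Bigl(\sodot_{i\in j_N}M_i,\ \sodot_{i\in j_N}\sigma_i\Bigr)=(\tilde f_{j_1})^e\Bigl(\sodot_{i\in j_N}N_i,\ \sodot_{i\in j_N}\tau_i\Bigr)
\]
for some $e$, and the folded crystal operators $\tilde f_j$ preserve $\B^*$ by construction in \cite{folding}. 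That preservation is the substantive input your argument is missing; without it (or an alternative such as the primality argument used later in Theorem~\ref{17.3}) the root-of-unity ambiguity cannot be eliminated.
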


\begin{proof}
 We proceed by induction on $N$, with the base case $N=0$ being trivial. So suppose $N\geq 1$. Let $N_i=N(s_{j_2}\cdots s_{j_N}\w_i,\w_i)$. By inductive hypothesis there exist isomorphisms $\tau_i\map{a^*N_i}{N_{ai}}$ such that
 \begin{equation}\label{taucomp}\tau_{a^{n-1}i}\circ a^*\tau_{a^{n-2}i} \circ \cdots \circ (a^*)^{n-1}\tau_i\end{equation}
is the identity map on $(a^*)^nN_i = N_i = N_{a^n i}$, 
and 
\[
[(\sodot_{i\in j_N} N_i,\sodot_{i\in j_N} \tau_i)] \in \B^*.
\]

Let $X_i=M(w \w_i, s_{j_2}\cdots s_{j_N}\w_i)$. Then $X_i$ is a simple module of $R(\nu)$ where $\nu$ is a sum of elements of $j_1$. Therefore $X_i$ is a circle product of one-dimensional simples, hence there is a canonical choice of isomorphism $x_i\map{a^*X_i}{X_{ai}}$.
By Theorem \ref{4.28}, $M_i\cong X_i\diamond N_i$. We define $\sigma_i$ to be the isomorphism induced from $x_i\circ \tau_i$. 
That condition (1) is satisfied follows from (\ref{taucomp}). To check (2), note that
 \[
  (\sodot_{i\in j_N} M_i,\sodot_{i\in j_N} \sigma_i)=(\tilde f_{j_1})^e (\sodot_{i\in j_N} N_i,\sodot_{i\in j_N} \tau_i)
 \]
for some integer $e$, where $\tilde f_j$ is the crystal operator from \cite[\S 9]{folding}. The basis $\B^*$ is defined in \cite{folding} in such a way that it is preserved by $\tilde f_j$, so this completes the proof.
\end{proof}

%
%

In \cite{kkkocombined}, it is shown that for each vertex $k\in V_{\rm ex}$, there exists a real simple module $M_k'$ and a short exact sequence
\begin{equation}\label{msprimedefn}
 0\to q\sodot_{i\to k} M_i \to q^{\tilde{\La}(M_k,M_k')}M_k\circ M_k' \to \sodot_{k\to i} M_i \to 0.
\end{equation}

\begin{lemma}
 For each $s\in V_{\rm ex}$, there is an isomorphism $a^*M_s'\cong M_{as}'$.
\end{lemma}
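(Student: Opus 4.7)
The plan is to apply the functor $a^{\ast}$ to the defining short exact sequence (\ref{msprimedefn}) for $M_s'$ and to identify the outer terms with the corresponding objects at the vertex $as$, so that $a^{\ast} M_s'$ satisfies the characterizing property of $M_{as}'$.

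First I would use that $a$ is an automorphism of the quiver $Q$: the arrows $i\to s$ in $Q$ correspond bijectively to the arrows $ai\to as$, and similarly for arrows out of $s$. Combined with the isomorphisms $\sigma_i \colon a^{\ast}M_i \to M_{ai}$ supplied by Lemma \ref{lem:initial} (and the naturality of $\odot$ under $a^{\ast}$, via the discussion immediately after Definition~\ref{def:qmswa}), this yields canonical isomorphisms
\[
 a^{\ast}\!\!\sodot_{i\to s} M_i \;\cong\; \sodot_{j\to as} M_j,\qquad a^{\ast}\!\!\sodot_{s\to i} M_i \;\cong\; \sodot_{as\to j} M_j.
\]
Next I would use the identity $r_{a^{\ast}X,a^{\ast}Y}=a^{\ast}r_{X,Y}$ recorded in Section 10 together with the fact that $a$ preserves the grading, to conclude that
\[
 \tilde{\Lambda}(M_s,M_s') \;=\; \tilde{\Lambda}(a^{\ast}M_s,a^{\ast}M_s') \;=\; \tilde{\Lambda}(M_{as},a^{\ast}M_s').
\]

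Applying $a^{\ast}$ to the short exact sequence (\ref{msprimedefn}) and substituting the above identifications then produces a short exact sequence
\[
 0 \to q\sodot_{j\to as} M_j \to q^{\tilde{\Lambda}(M_{as},a^{\ast}M_s')} M_{as} \circ a^{\ast}M_s' \to \sodot_{as\to j} M_j \to 0,
\]
which is precisely the sequence characterizing $M_{as}'$. Since $a^{\ast}M_s'$ remains real and simple (as $a^{\ast}$ is an autoequivalence), uniqueness of the module fitting into this sequence, established in \cite{kkkocombined}, forces $a^{\ast}M_s'\cong M_{as}'$.

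The only genuinely delicate point is the invocation of uniqueness: one must check that the KKKO construction of $M_s'$ characterizes it up to isomorphism by the shape of the exchange sequence (\ref{msprimedefn}), so that any real simple module slotting into an analogous sequence at the vertex $as$ must coincide with $M_{as}'$. Everything else is a matter of transporting the sequence along the quiver automorphism using Lemma \ref{lem:initial} and the compatibility of $R$-matrices with $a^{\ast}$.
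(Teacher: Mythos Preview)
Your argument is correct and rests on the same two ingredients as the paper's: the isomorphisms $a^{\ast}M_t\cong M_{at}$ from Lemma~\ref{lem:initial}, and the fact that the exact sequence (\ref{msprimedefn}) pins down $M_s'$. The paper, however, packages the second step more economically. Rather than transporting the whole sequence and invoking a module-level uniqueness statement from \cite{kkkocombined}, it passes immediately to the Grothendieck group: since $a^{\ast}M_s'$ and $M_{as}'$ are both simple, it suffices to show their classes agree; and since $U_q(\mathfrak{n})$ is a domain, the relation $[M_s][M_s']=[\sodot_{i\to s}M_i]+q[\sodot_{s\to i}M_i]$ determines $[M_s']$ by cancellation. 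This bypasses the checks you carry out on $\tilde{\Lambda}$ and on $a^{\ast}M_s'$ being real, and it replaces your ``delicate point'' about uniqueness in \cite{kkkocombined} with the elementary observation that one can divide in a domain. Your route is sound, but the Grothendieck-group shortcut is what makes the paper's proof a three-line affair.
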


\begin{proof}
 Since $a^*M_s'$ and $M'_{as}$ are simple, it suffices to prove an equality $a[M'_s]=[M'_{as}]$ in the Grothendieck group. Since the Grothendieck group $U_q(\mathfrak{n})$ is a domain, the class $[M'_s]$ is computed from the short exact sequence (\ref{msprimedefn}). By Lemma \ref{lem:initial}, there is an isomorphism between $a^*M_t$ and $M_{at}$ for all vertices $t$ of $Q_0$. This proves the desired equality $a[M'_s]=[M'_{as}]$.
\end{proof}

\begin{theorem}
 There exist choices of isomorphisms $\sigma_s\map{a^*M_s}{M_{as}}$ for $s\in V$ and $\sigma'_s\map{a^*M_s'}{M_{as}'}$ for $s\in V_{\rm ex}$ such that 
 \begin{enumerate} \label{cond:one}
  \item $$\sigma_{a^{n-1}i}\circ a^*\sigma_{a^{n-2}i} \circ \cdots \circ (a^*)^{n-1}\sigma_i$$
is the identity map on $(a^*)^nM_i = M_i = M_{a^n i}$, and 
$$\sigma'_{a^{n-1}i}\circ a^*\sigma'_{a^{n-2}i} \circ \cdots \circ (a^*)^{n-1}\sigma'_i$$
is the identity map on $(a^*)^nM'_i = M'_i = M'_{a^n i}$.
 \item for all $s\in V_{\rm ex}$, the commutative diagram (\ref{eq:mutation}) with exact rows holds. 
 \end{enumerate}
 \end{theorem}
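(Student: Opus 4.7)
The plan is to construct the $\sigma_s$ for unmutated cluster variables first using Lemma \ref{lem:initial}, then define each $\sigma_s'$ by imposing commutativity of the mutation diagram (which pins it down via simplicity), and finally verify the cocycle condition for the $\sigma_s'$ by iterating the diagram $n$ times around each orbit.

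First I would handle the $\sigma_s$ for $s \in V$. Each $a$-orbit in $V$ is exactly the set of vertices $(t_l, i_{t_l})$ that arise from expanding one block $s_{j_k} = \prod_{i \in j_k} s_i$ of the reduced expression for $w$. Since the simple reflections $s_i$ for $i$ in a fixed $j_k$ all commute and each fixes every $\omega_{i'}$ with $i' \in j_k \setminus \{i\}$, the module $M_{(t_l, i_{t_l})}$ equals $M(s_{j_1} \cdots s_{j_k} \omega_{i_{t_l}}, \omega_{i_{t_l}})$. Applying Lemma \ref{lem:initial} to the reduced expression $s_{j_1}\cdots s_{j_k}$ in $W$ (viewed as having last block $j_N = j_k$) supplies, for each orbit, a family of isomorphisms satisfying the cocycle condition in part (1) of that lemma.

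Next I would construct $\sigma_s'$ for $s \in V_{\rm ex}$. Apply $a^*$ to the defining short exact sequence (\ref{msprimedefn}) for $M_s'$. Since the edge set of $Q_0$ is $a$-invariant, $a$ provides a bijection between the edges $i \to s$ and $ai \to as$, and the $\tilde\Lambda$-shift is preserved under $a$. Using the previously constructed $\sigma_v$ to identify the outer terms, the transported sequence coincides with the defining sequence for $M_{as}'$. The characterization of $M_{as}'$ then yields an isomorphism $a^*M_s' \to M_{as}'$, unique up to scalar, and the scalar is pinned down precisely by requiring diagram (\ref{eq:mutation}) to commute. This gives the desired $\sigma_s'$ and property (2) of the theorem.

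The main obstacle is verifying the cocycle condition for $\sigma_s'$. Iterate (\ref{eq:mutation}) $n$ times along the $a$-orbit of $s$; by the cocycle conditions on the non-mutated $\sigma_v$'s (established in the first paragraph), the iterated compositions of the left and right columns reduce to the identity on $q\sodot_{i\to s} M_i$ and $\sodot_{s \to i} M_i$. Denote the iterated middle composition by $\Phi: M_s \circ M_s' \to M_s \circ M_s'$; compatibility of $a^*$ with the circle product factors $\Phi$ as (cocycle for $\sigma_s$) $\circ$ (cocycle for $\sigma_s'$). Simplicity of $M_s$ and $M_s'$ forces each cocycle to act by a scalar $c$ and $c'$ respectively, so $\Phi = cc' \cdot \mathrm{id}$. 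The commutativity of the iterated diagram on the submodule then forces $cc' = 1$, and combined with $c = 1$ from the cocycle on $\sigma_s$, we obtain $c' = 1$, as required. The delicate point is ensuring that the scalar induced on the submodule really is the product $cc'$ (rather than some twist depending on the grading shift), which requires tracking the $\tilde\Lambda$ shifts carefully through the iteration.
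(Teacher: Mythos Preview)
Your approach has a genuine gap at step two. Once you fix all the $\sigma_s$ for $s\in V$ via Lemma~\ref{lem:initial}, the mutation diagram~(\ref{eq:mutation}) imposes \emph{two} scalar conditions on $\sigma'_s$ --- one from the left square (matching the socle with $\sodot_{i\to s}\sigma_i$) and one from the right square (matching the head with $\sodot_{s\to i}\sigma_i$) --- while $\sigma'_s$ carries only a single scalar degree of freedom. In the paper's notation, with arbitrary initial choices $\tau_s,\tau'_s$, the induced maps on sub and quotient are $\delta_s\sodot_{t\to s}\tau_t$ and $\epsilon_s\sodot_{s\to t}\tau_t$; setting $\sigma_s=\lambda_s\tau_s$ and $\sigma'_s=\mu_s\tau'_s$, commutativity of~(\ref{eq:mutation}) forces
\[
\lambda_s\mu_s=\delta_s\prod_{t\to s}\lambda_t
\qquad\text{and}\qquad
\lambda_s\mu_s=\epsilon_s\prod_{s\to t}\lambda_t,
\]
so the $\lambda_t$'s must themselves satisfy $\delta_s\prod_{t\to s}\lambda_t=\epsilon_s\prod_{s\to t}\lambda_t$. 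You have not shown that the particular $\sigma_s$ produced orbit-by-orbit by Lemma~\ref{lem:initial} satisfy these cross-orbit constraints, and there is no apparent reason they should: the inductive construction in that lemma passes through modules for the \emph{truncated} word $s_{j_2}\cdots s_{j_N}$, not through the other $M_v$ in the initial quiver for $w$.

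The paper's proof resolves this by reversing the logic. It fixes $\sigma_s$ via Lemma~\ref{lem:initial} only for the \emph{leftmost} vertex in each row of $Q_0$, and then uses the consistency equations above to \emph{define} the remaining $\lambda_t$ recursively (the vertex immediately to the right of $s$ appears uniquely among the $s\to t$ arrows, so one can solve for its $\lambda$). With the $\lambda_t$ so determined, the $\mu_s$ exist by construction, giving property~(2). The cocycle condition~(1) for the non-leftmost $\sigma_t$ and for all $\sigma'_s$ is then obtained by stacking $n$ copies of~(\ref{eq:mutation}) and inducting on the first coordinate --- essentially the argument in your final paragraph, but now applied to the $\sigma_t$ as well, not just the $\sigma'_s$.
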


 \begin{proof}
  
For each vertex $s$, choose an isomorphism $\tau_s\map{a^*M_s}{M_{as}}$ and for each exchangeable vertex $s$, choose an isomorphism $\tau'_s\map{a^*M'_s}{M'_{as}}$.

For each exchangeable $s$, consider the diagram
\begin{equation}\label{eq:cd1}
  \begin{CD}
   0@>>>a^*(q\sodot_{t\to s}M_t)@>>>a^*(q^{\tilde{\Lambda}(M_s,M_s')}M_s\circ M_s') @>>> a^*\sodot_{s\to t}M_t@>>>0\\
  @. @VV f V   @VV\tau_s\circ\tau'_s V @VV g V\\
   0@>>>q\sodot_{t\to s}M_{at}@>>>q^{\tilde{\Lambda}(M_s,M_s')}M_{as}\circ M_{as}'@>>> \sodot_{s\to t}M_{at} @>>> 0.
  \end{CD}
 \end{equation}
Here, $f$ and $g$ are isomorphisms canonically induced by the isomorphism $\tau_s\circ \tau'_s$ since $M_s\circ M'_s$ is uniserial.

Since $\odot_{t\to s} M_t$ and $\odot_{s\to t} M_t$ are irreducible and $k$ is algebraically closed, Schur's Lemma implies that there exist $\de_s,\e_s\in k^\times$ such that
\[
 f=\d_s\sodot_{t\to s} \tau_t,\qquad g=\e_s\sodot_{s\to t}\tau_t.
\]

Let $\sigma_s=\la_s\tau_s$ and $\sigma'_s=\mu_s\tau'_s$ for some constants $\la_s$ and $\mu_s$ which are to be determined. 

Note that (\ref{eq:mutation}) will be satisfied if and only if
\begin{equation}\label{eq:compatibility}
 \la_s\mu_s = \d_s \prod_{t\to s} \la_t \quad \mbox{and} \quad \la_s\mu_s = \e_s \prod_{s\to t} \la_t.
\end{equation}

For each $i\in I$, consider the leftmost vertex $s$ in $Q_0$ with second coordinate $i$. Make a choice of $\la_s$ for these classes such that the corresponding collection of $\sigma_s$'s satisfy the condition (\ref{cond:one}). This is possible by Lemma \ref{lem:initial}.
We now claim that once this choice is made, there is a unique solution to (\ref{eq:compatibility}) in the remaining variables $\la_s$ and $\mu_s$.

To see this, note first that it suffices first to solve the following system of equations in the variables $\la_t$:
\begin{equation}\label{lambdasystem}
 \d_s\prod_{t\to s}\la_t = \e_s \prod_{s\to t}\la_t.
\end{equation}

This system of equations determines the values of $\la_t$ recursively. To find the value of $\la_t$ in terms of those $\la_s$ with $s$ where $s$ has a smaller first coordinate, look at and rearrange the equation (\ref{lambdasystem}) where $s$ is the vertex immediately to the left of $t$.

This deals with the question of satisfying the second property in the theorem statement. For the first, we stack many copies of the diagram (\ref{eq:mutation}) on top of each other to obtain  
\[
\begin{CD}
   0@>>>(q\sodot_{i\to k}M_i)@>>>(q^{{\Lambda}(M_k,M_k')}M_k\circ M_k') @>>> \sodot_{k\to i}M_i@>>>0\\
  @. @VVf V   @VVg V @VV h V\\
   0@>>>q\sodot_{i\to k}M_{i}@>>>q^{{\Lambda}(M_k,M_k')}M_{k}\circ M_{k}'@>>> \sodot_{k\to i}M_{i} @>>> 0
  \end{CD} \]
  where $f$, $g$ and $h$ are all the appropriate compositions of circle products. Induct on the vertex again. By inductive hypothesis, $f=\id$. Since $\End(M_k\circ M_k')\cong k$ and the diagram commutes, $g=\id$.
  Therefore we get the property for the primed composition. Again since the diagram commutes, $h=\id$, so we get the desired property there too, completing the induction step.
 \end{proof}

\begin{definition}\label{def:initqms}
 The quadruple
\[
 \J_{\rm in}=(\{M_i\}_{i\in V},Q_0,a,\{\sigma_i\}_{i\in V})
\]
is the initial seed associated to the choice of reduced expression for $w$.
\end{definition}

\begin{proposition}
Suppose $v=(k,i_k)\in V$. Let $l_1,\ldots,l_r$ be the location of indices in the reduced expression $s_{i_1}\cdots s_{i_k}$ that are equal to $i_k$ (so in particular $l_r=k$).
 In the classification of irreducible modules in terms of a cuspidal decomposition from \cite{tingleywebster}, we have
\[
 M_v=\operatorname{hd}(L(\b_{l_r}),\ldots,L(\b_{l_1})).
\]
\end{proposition}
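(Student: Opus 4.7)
My plan is to realize $M_v$ as an iterated head of generalised minor modules along a chain of extremal weights in $V(\w_{i_k})$, and then identify each factor with a cuspidal module.

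First, I would define the chain of extremal weights by setting $\mu_{r+1} = \w_{i_k}$ and $\mu_s = s_{i_1}\cdots s_{i_{l_s}}\w_{i_k}$ for $1 \leq s \leq r$. Using the standard identity $s_j \w_i = \w_i$ for $j \neq i$, a direct induction shows that these partial products only change at the positions $l_j$, and the resulting computation gives $\mu_{s+1} - \mu_s = \b_{l_{r+1-s}}$. In particular, $\mu_1 < \mu_2 < \cdots < \mu_{r+1}$ is a strictly increasing chain in $W\w_{i_k}$, so Theorem~\ref{4.28} applies between any two consecutive weights (and indeed between any two weights) in this chain.

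Next, by the definition in Section~\ref{sec:initseed} we have $M_v = M(\mu_1, \mu_{r+1})$, so iterating Theorem~\ref{4.28} yields
\[
M_v \cong M(\mu_1,\mu_2) \diamond M(\mu_2,\mu_3) \diamond \cdots \diamond M(\mu_r,\mu_{r+1}).
\]
Each factor $M(\mu_s,\mu_{s+1})$ has degree $\b_{l_{r+1-s}}$, and combining Proposition~\ref{mexists} with Theorem~\ref{cuspgm} gives
\[
\ga^*([M(\mu_s,\mu_{s+1})]) = D(\mu_s,\mu_{s+1}) = \ga^*([L(\b_{l_{r+1-s}})]).
\]
Since both modules are self-dual simple and have the same class, they are isomorphic, producing
\[
M_v \cong L(\b_{l_r}) \diamond L(\b_{l_{r-1}}) \diamond \cdots \diamond L(\b_{l_1}).
\]

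Finally, I would convert the iterated $\diamond$ into the head of the iterated convolution. By Lemma~\ref{lem:wconvex} the factors $\b_{l_r} \succ \b_{l_{r-1}} \succ \cdots \succ \b_{l_1}$ are in strictly decreasing convex order, so the Tingley--Webster theory of cuspidal decompositions asserts that $L(\b_{l_r}) \circ L(\b_{l_{r-1}}) \circ \cdots \circ L(\b_{l_1})$ has simple head. Given this, a short induction using that $(X \diamond Y) \circ Z$ is a quotient of $X \circ Y \circ Z$ (so its simple head, if it exists, must agree with the simple head of $X \circ Y \circ Z$) shows that iterated $\diamond$ equals the head of the iterated convolution, completing the identification $M_v \cong \hd(L(\b_{l_r}) \circ \cdots \circ L(\b_{l_1}))$.

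The main obstacle will be the last step: invoking and correctly applying the Tingley--Webster simple-head property for convolutions of distinct cuspidals arranged in the convex order, and checking that iterated $\diamond$ genuinely returns this common simple head rather than just a proper quotient. Everything else is a direct combination of the earlier results in the paper.
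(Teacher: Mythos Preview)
Your approach is sound in spirit but contains an indexing slip: with your definition $\mu_s = s_{i_1}\cdots s_{i_{l_s}}\w_{i_k}$ one computes $\mu_s = \w_{i_k} - \sum_{j\leq s}\b_{l_j}$, so the sequence is \emph{decreasing} from $\mu_1$ to $\mu_r$ and then jumps up to $\mu_{r+1} = \w_{i_k}$; neither the claimed chain $\mu_1 < \cdots < \mu_{r+1}$ nor the formula $\mu_{s+1}-\mu_s = \b_{l_{r+1-s}}$ holds as written. Reversing the indexing (e.g.\ set $\mu_s = s_{i_1}\cdots s_{i_{l_{r+1-s}}}\w_{i_k}$) repairs this, and the rest of your argument then goes through.

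Once fixed, your route genuinely differs from the paper's. The paper does not pass through Theorem~\ref{4.28} or iterate the $\diamond$ operation at all: it applies Lemma~\ref{restrictgm} directly to compute
\[
\Res_{\b_{l_r},\ldots,\b_{l_1}} M_v \cong L(\b_{l_r}) \otimes \cdots \otimes L(\b_{l_1}),
\]
and observes that this single restriction already pins down $M_v$ in the Tingley--Webster classification (a simple is determined by its cuspidal support, read off from exactly such a restriction). This sidesteps entirely the need to check that iterated $\diamond$ agrees with the head of the full convolution---the step you correctly flag as the most delicate in your approach. Your argument makes the head structure more explicit, but is somewhat roundabout since Theorem~\ref{4.28} is itself proved via Lemma~\ref{restrictgm}; the paper's proof is shorter because the restriction lemma encodes precisely the coproduct data needed to invoke the classification in one stroke.
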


\begin{proof}
 For $0\leq m\leq r$, let $\mu_m=s_{i_1}s_{i_2}\cdots s_{i_{l_m}}\w_{i_k}$. We rewrite Theorem \ref{cuspgm} in the form $[L(\b_{l_m})]=D(\mu_m,\mu_{m-1})$. By Lemma \ref{restrictgm}, we then see that
\[
 \Res_{\mu_r-\mu_{r-1},\ldots,\mu_1-\mu_0}M_v \cong L(\b_{l_r})\otimes\cdots\otimes L(\b_{l_1}).
\]
This is enough to identify $M_v$ in the classification of irreducible modules in terms of semicuspidal decompositions.
\end{proof}

\begin{corollary}
 The modules $M_v$ in an initial cluster all lie in the cuspidal category $\C_{w}$.
\end{corollary}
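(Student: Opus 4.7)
The plan is to leverage the identification from the preceding proposition: $M_v \cong \hd(L(\b_{l_r}) \circ \cdots \circ L(\b_{l_1}))$, where the $\b_{l_j}$ are, by construction, among the roots $\b_1,\ldots,\b_m$ produced from the reduced expression of $w$ via Proposition \ref{prop:standard}, and therefore lie in $\Phi(w)$. The crucial feature I will exploit is that the convex order $\prec$ used to define these cuspidal modules was tailored in Lemma \ref{lem:wconvex} so that every positive root outside $\Phi(w)$ strictly exceeds $\b_l$.

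First I reduce the problem to studying the full induced product $P := L(\b_{l_r}) \circ \cdots \circ L(\b_{l_1})$ rather than its head $M_v$. Since $M_v$ is a quotient of $P$ and $\Res_{\la,\mu}$ is exact, any nonzero $\Res_{\la,\mu} M_v$ is a quotient of $\Res_{\la,\mu} P$, so it suffices to verify that $\Res_{\la,\mu} P \neq 0$ forces $\la \in \N\Phi(w)$. I will then apply an iterated Mackey filtration \cite[Prop.~2.18]{khovanovlauda} to $\Res_{\la,\mu} P$: its subquotients are inductions of tensor products of the form $\bigotimes_{j=1}^{r} \Res_{\la_j,\mu_j} L(\b_{l_j})$, indexed by decompositions $\la_j + \mu_j = \b_{l_j}$ satisfying $\sum_j \la_j = \la$. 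For such a subquotient to contribute, every factor must be nonzero, and cuspidality of $L(\b_{l_j})$ forces $\la_j$ to be an $\N$-linear combination of positive roots $\preceq \b_{l_j}$.

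Finally I invoke Lemma \ref{lem:wconvex}: any positive root outside $\Phi(w)$ is strictly greater than $\b_l$, and in particular than each $\b_{l_j}$, so any positive root $\preceq \b_{l_j}$ already lies in $\Phi(w)$. Thus $\la_j \in \N\Phi(w)$ for each $j$, whence $\la = \sum_j \la_j \in \N\Phi(w)$, establishing membership of $M_v$ in $\C_w$. There is no serious obstacle in this argument; the only bookkeeping point requiring care is the iterated Mackey filtration, which follows by induction on $r$ from the two-factor case.
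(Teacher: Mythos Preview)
Your argument is correct and follows essentially the route the paper intends. The paper gives no explicit proof of the corollary because it regards it as immediate from the preceding proposition together with the facts already established in \S\ref{qur}: each cuspidal module $L(\b_k)$ lies in $\C_w$, the category $\C_w$ is closed under the induction product (via the Mackey filtration), and $\C_w$ is closed under subquotients. Your proof simply unpacks these closure properties directly rather than citing them, carrying out the Mackey computation and the appeal to Lemma~\ref{lem:wconvex} explicitly; the underlying mechanism is the same.
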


\section{Cluster mutation}
%
%
%

\begin{theorem}
Pick a reduced expression for $w$. Then
 the corresponding initial quantum monoidal seed with automorphism $\mathcal{J}_{\rm in}$ from Definition \ref{def:initqms} admits arbitrary mutations in all directions.
\end{theorem}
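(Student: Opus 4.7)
The plan is to argue by induction on the number of mutations applied to $\J_{\rm in}$, with the base case handled by the initial seed construction just completed. Suppose $\J=(\{M_i\}_{i\in V},Q,a,\{\sigma_i\}_{i\in V})$ is a quantum monoidal seed with automorphism reachable from $\J_{\rm in}$ by a sequence of mutations, and fix a mutation direction $s\subset V_{\rm ex}$ (a single $a$-orbit). The task is to produce new modules $M_k'$ for $k\in s$ together with isomorphisms $\sigma_k'\map{a^*M_k'}{M_{ak}'}$ satisfying the conditions of Definition \ref{def:qmswa} and making the mutation diagram (\ref{eq:mutation}) commute.

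The underlying KLR modules $M_k'$, the short exact sequence (\ref{msprimedefn}), the reality of $M_k'$, and the fact that the mutated tuple satisfies Definition \ref{def:qmswa}(3) come directly from the monoidal cluster categorification of \cite{kkkocombined}. These results apply in arbitrary characteristic by virtue of Theorem \ref{4.28} and Corollary \ref{4.30}. For each $k\in s$, applying $a^*$ to (\ref{msprimedefn}) and using the $\sigma_i$ to identify $a^*M_i\cong M_{ai}$ for the neighbors $i$ of $k$ in $Q$ produces a short exact sequence of the same shape as the defining sequence for $M_{ak}'$. Uniqueness of the simple middle term (via \cite[Theorem 3.2]{kkko}) then supplies an isomorphism $\tau_k'\map{a^*M_k'}{M_{ak}'}$, defined up to a scalar in $k^\times$.

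The remaining work is scalar bookkeeping, handled in the style of the final step of the initial seed construction. Writing $\sigma_k'=\mu_k\tau_k'$ with $\mu_k\in k^\times$ to be determined, Schur's lemma applied to the simple modules $\sodot_{i\to k}M_i$ and $\sodot_{k\to i}M_i$ that appear in (\ref{eq:mutation}) produces canonical scalars $\d_k,\e_k\in k^\times$, and the commutativity of (\ref{eq:mutation}) amounts to the requirement that a single $\mu_k$ equalizes both expressions. By Definition \ref{def:qmswa}(2), the isomorphisms $\sigma_i$ feeding into these products all come from vertices outside $s$ and are therefore fixed by the inductive hypothesis, so $\mu_k$ is the only free parameter. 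The orbit condition (\ref{atonis1}) for the $\sigma_k'$'s is then established by stacking $n$ copies of (\ref{eq:mutation}), using $\End(M_k\circ M_k')\cong k$, and invoking the inductive orbit condition for the $\sigma_i$'s, exactly as at the end of the initial seed theorem.

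The main obstacle is verifying the scalar consistency $\d_k\prod_{i\to k}\la_i=\e_k\prod_{k\to i}\la_i$ (where the $\la_i$ record the previously chosen scalings of the $\sigma_i$), without which no single $\mu_k$ can realize both halves of the diagram. I expect this to follow from the $a$-equivariance of $R$-matrices, $r_{a^*X,a^*Y}=a^*r_{X,Y}$, which forces the canonical isomorphisms underlying $\d_k$ and $\e_k$ to transform equivariantly under the combinatorial action of $a$ on $Q$; combined with the inductively established $a$-equivariance of the existing $\sigma_i$'s, this should give the required identity. Once a single mutation step is established, the theorem follows by iteration.
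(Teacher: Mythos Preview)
Your induction strategy and your identification of the main obstacle are both correct: once all the $\sigma_i$ for $i\notin s$ are fixed by the inductive hypothesis, the single free scalar $\mu_k$ must simultaneously realise both squares in (\ref{eq:mutation}), which forces the identity $\d_k\prod_{i\to k}\la_i=\e_k\prod_{k\to i}\la_i$. But your resolution of this obstacle is a genuine gap. The equivariance $a^*r_{X,Y}=r_{a^*X,a^*Y}$ tells you that the maps appearing in the short exact sequence intertwine $a^*$; it does not by itself force the induced isomorphisms on socle and head to agree with the \emph{particular} isomorphisms $\sodot_{i\to k}\sigma_i$ and $\sodot_{k\to i}\sigma_i$ inherited from the seed. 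An easy argument using $\End(M_k\circ M_k')\cong k$ and composing around the orbit only shows that the ratio $\d_k/\e_k$ is a root of unity; you give no mechanism to pin it down to $1$, and unlike in the initial seed construction there are no remaining free $\la$-parameters to absorb a discrepancy.

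The paper does not attempt to verify this scalar identity directly. Instead, it unpacks the proof of \cite[Theorem 7.1.3]{kkkocombined} with the automorphism carried along. The key point is that the inductive hypothesis is used for the \emph{previous} seed $\J=\mu_{s_{t-2}}\cdots\mu_{s_1}(\J_{\rm in})$, where mutations in all directions (in particular in direction $s_{t-1}$, giving $N_x=M_x'$) already exist with compatible $\sigma$'s. One then pulls out the auxiliary modules $L_{xy}$, $A_{xy}$ and the factorisation $L_{xy}\diamond M_y'\cong M_y''\circ A_{xy}$ from \cite{kkkocombined}, and \emph{defines} $\tau_y\colon a^*M_y''\to M_{ay}''$ via the commutative square (\ref{taucd}). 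The mutation short exact sequence for $M_y''$ is then assembled, in \cite{kkkocombined}, from the already-established mutation diagrams (\ref{eq:mutation}) for $\J$ together with $R$-matrix manipulations; commutativity with the $\sigma$'s and the newly defined $\tau_y$'s follows from the inductive hypothesis and $a^*r=r\circ a^*$ applied to that specific construction, with no separate scalar bookkeeping needed. Your direct route would have to either supply a real proof of the consistency identity or take this detour through the \cite{kkkocombined} machinery.
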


\begin{proof}
 We have to show that for any sequence $s_1,\ldots,s_t$ of elements in $K_{\rm ex}$, the mutation $\mu_{s_t}\cdots \mu_{s_1}(\mathcal{J}_{\rm in})$ exists.

We will achieve this via an induction on $t$. For $t\leq 1$, this was proved in the previous section. So now suppose that $t\geq 2$. Define
\[
 \mathcal{J}=(\{M_i\}_{i\in V},Q,a,\{\sigma_i\}_{i\in V})=\mu_{s_{t-2}}\cdots \mu_{s_1}(\mathcal{J}_{\rm in}).
\]
By inductive hypothesis, for each $s\in K_{\rm ex}$, the mutation
\[
 \mu_s(\mathcal{J})=(\{M_i\}_{i\in V\setminus s} \cup \{M_i'\}_{i\in s},\mu_s(Q),a,\{\sigma_i\}_{i\in V\setminus s} \cup \{\sigma_i'\}_{i\in s})
\] exists.
Let
\[
 \mu_{s_{t-1}}(\mathcal{J})=(\{N_i\}_{i\in V},\mu_{s_{t-1}}(Q),a,\{\tau_i\}_{i\in V}).
\]
As in \cite{kkkocombined}, we shall assume that there are only arrows in $Q$ from $s_{t-1}$ to $s_{t}$, the other case proceeding similarly. Let $y\in s_t$. Choose $x\in s_{t-1}$ Define modules
\[
 L_{x,y}=N_x^{\odot b_{xy}}
\]
and
\[
 A_{x,y}=\sodot_i M_i^{\odot \min(b_{ix}b_{xy},-b_{iy})}.
\]
A module $M''_y$ is then constructed in the proof of \cite[Theorem 7.1.3]{kkkocombined} such that
\[
 L_{xy}\diamond M_y'\cong M_y''\circ A_{xy}.
\]
Make a choice of isomorphism $f_y\map{L_{xy}\diamond M_y'}{M_y''\circ A_{xy}}$.

Now we have isomorphisms $\sigma'\map{a^*L_{x,y}}{L_{ax,ay}}$ and $\sigma\map{a^*A_{x,y}}{A_{ax,ay}}$ which are induced from $\mu_x(\mathcal{J})$ and $\mathcal{J}$. There is also an isomorphism $\sigma'\map{a^*M_y'}{M_{ay}'}$. Together they induce an isomorphism $\sigma'\map{a^*(L_{x,y}\diamond M_y')}{L_{ax,ay}\diamond M_{ay}'}$. Consider the diagram
\begin{equation}\label{taucd}
 \begin{CD}
   a^*(L_{x,y}\diamond M_y') @>\sigma'>>   L_{ax,ay}\diamond M_{ay}'\\
   @VVa^*f_y V   @VVf_{ay} V\\
   a^*M_y''\circ a^*A_{x,y} @>\tau>> M_{ay}''\circ A_{ax,ay}
 \end{CD}
\end{equation}
 There is a unique morphism $\tau$ as in the diagram above making it commute. Since $M''_y\circ A_{x,y}$ is irreducible, there is a unique $\tau_y\map{a^*M_y''}{M_{ay}''}$ such that $\tau=\tau_y\circ \sigma$. 
This $\tau_y$ is the morphism we seek.

The family of morphisms $\tau_y$ satisfy the condition (4) in Definition \ref{def:qmswa} if and only if the family $\tau$ do. Since the family $\sigma'$ satisfies this condition, this follows from the commutative diagram (\ref{taucd}).


It remains to show the mutation property holds. 
Define
\begin{align*}
 B=\sodot_{b_{iy} > 0, \ b_{ix} >0} M_i^{\snconv b_{ix}b_{xy}} \nconv \sodot_{\substack{b_{iy}' >0, \; b_{iy} <0,\;b_{ix} >0}} M_i^{\snconv b_{iy}'}, \\
 P_y=\sodot_{b_{iy} >0, i\neq x} M_i^{\snconv b_{iy}}, \quad  Q_y=\sodot_{b_{iy}' <0, \ i\neq x} M_i^{\snconv-b_{yi}'}.
\end{align*}

We consider the following diagram, whose horizontal rows are the exact sequences constructed in the proof of \cite[Theorem 7.1.3]{kkkocombined} (two displayed equations above (7.13)). The integer $c$ is described explicitly in \cite{kkkocombined} and the vertical morphisms are composites of the canonical maps from $a^\ast X_y$ to $X_{ay}$ for $X\in\{B,P,A,M,L,M',Q\}$.
 
 \centerline{\xymatrix{
 0 \ar[r] & a^*(q(B_y\sodot P_y)\circ A_y)   \ar[r]\ar[d] 
& a^* (q^c M_y\circ (L_y\diamond M_y')) \ar[r]\ar[d] & a^* ((L_y\sodot Q_y)\circ A_y) \ar[r]\ar[d]
  & 0
 \\
   0 \ar[r] & q(B_{ay}\sodot P_{ay})\circ A_{ay} \ar[r] & q^c M_{ay}\circ (L_{ay}\diamond M_{ay}') \ar[r] & (L_{ay}\sodot Q_{ay})\circ A_{ay}  \ar[r] & 0.
}}
  This diagram is built from the short exact sequences for mutation between $M_y$ and $M_y'$, together with $R$-matrix constructions. Using (\ref{eq:mutation}) and $a^*r_{X,Y}=r_{a^*X,a^*Y}$, we see that this diagram commutes.
  
  We then combine this with (\ref{taucd}) to obtain a commutative diagram
  
  \centerline{\xymatrix{
 0 \ar[r] & a^*(q(B_y\sodot P_y)\circ A_y )  \ar[r]\ar[d] 
& a^* (q^c M_y\circ M_y''\circ A_y) \ar[r]\ar[d] & a^* ((L_y\sodot Q_y)\circ A_y) \ar[r]\ar[d]
  & 0
 \\
   0 \ar[r] & q(B_{ay}\sodot P_{ay})\circ A_{ay} \ar[r] & q^c M_{ay}\circ M_{ay}''\circ A_{ay} \ar[r] & (L_{ay}\sodot Q_{ay})\circ A_{ay}  \ar[r] & 0.
}}

The proof of \cite[Theorem 7.1.3]{kkkocombined} then shows that the following diagram has exact rows

 \centerline{\xymatrix{
 0 \ar[r] & a^*q(B_y\sodot P_y)    \ar[r]\ar[d] 
& a^* (q^c M_y\circ M_y'')  \ar[r]\ar[d] & a^* (L_y\sodot Q_y)  \ar[r]\ar[d]
  & 0
 \\
   0 \ar[r] & q(B_{ay}\sodot P_{ay})  \ar[r] & q^c M_{ay}\circ M_{ay}''  \ar[r] & (L_{ay}\sodot Q_{ay})   \ar[r] & 0.
}}

We need to show that this diagram commutes to show that $\mu_{s_t}\cdots \mu_{s_1}(\mathcal{I}_{\rm in})$ exists. It commutes because the diagram above it is obtained by applying $-\circ A$, and that diagram commutes.
  \end{proof}

\section{Decategorification}

Now we have established the existence of a quantum monoidal seed with automorphism, that has successive mutations in all directions. There was a choice made in the construction of the initial seed, but Theorem \ref{independence} below shows that this choice does not matter for the categorified quantum cluster structure. We now investigate what implications this categorical structure has for a cluster algebra structure on $\A_q(\n(w))$.

Suppose $\J=(\{M_i\}_{i\in V},Q,a,\{\sigma_i\}_{i\in V})$ is a reachable quantum monoidal seed with automorphism in $\C_w$. From $\J$, we now give a recipe for constructing a cluster in $\A_q(\n(w))$.

For each $a$-orbit $s$ of $V(Q)$, we define the cluster variable
\begin{equation}\label{xsdefn}
 x_s = \left[ \left( \sodot_{i\in s} M_i,\sodot_{i\in s}  \sigma_i \right) \right]
\end{equation}

The exchange matrix $B$ is constructed from $Q$ in the following fashion. Let $s$ and $t$ be two $a$-orbits on the vertex set $V$. Then $b_{ts}$ is equal to the number of arrows from one element of $t$ to all elements of $s$. Here if an arrow goes from $t$ to $s$ it counts as a positive arrow and if an arrow goes from $s$ to $t$ it counts as a negative arrow for the purposes of this count.

The matrix $\La=(\la_{st})$ is uniquely determined by the equations $x_s x_t=q^{\la_{st}}x_tx_s$, using Lemma \ref{lem:qcommute} below.

Denote the collection $(\{x_s\},B,\La)$ by $X(\J)$.

\begin{lemma}\label{lem:qcommute}
 The elements $x_s$ defined by (\ref{xsdefn}) all q-commute.
\end{lemma}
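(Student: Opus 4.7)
The plan is to exploit the remark immediately after Definition~\ref{def:qmswa}, which tells us that for any multiset of indices $i_1,\ldots,i_n \in V$ the iterated induction product $M_{i_1}\circ\cdots\circ M_{i_n}$ is simple. Setting $M_s = \sodot_{i\in s} M_i$ and $M_t = \sodot_{j\in t} M_j$, this in particular implies that $M_s\circ M_t$ (and $M_t\circ M_s$) is a simple module over the appropriate unfolded KLR algebra.

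Given this, the lemma in the section on $R$-matrices applies: the $R$-matrices $r_{M_s,M_t}$ and $r_{M_t,M_s}$ are mutually inverse isomorphisms (up to grading shift), and the $q$-shifted modules $q^{\Lambda(M_s,M_t)}M_s\circ M_t$ and $q^{\Lambda(M_t,M_s)}M_t\circ M_s$ are both self-dual simple modules, canonically isomorphic to $M_s\odot M_t$. In the Grothendieck group this yields the identity
\[
[M_s\circ M_t] \;=\; q^{\Lambda(M_t,M_s)-\Lambda(M_s,M_t)}\,[M_t\circ M_s].
\]

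To lift this to $\kk^*$, I would invoke the compatibility of the $R$-matrix with the automorphism, namely $r_{a^*X,a^*Y}=a^*r_{X,Y}$, which ensures that the $R$-matrix isomorphism (up to grading shift) is a morphism in the folded category between $(M_s,\sigma_s)\circ(M_t,\sigma_t)$ and $(M_t,\sigma_t)\circ(M_s,\sigma_s)$ (this is precisely the content encapsulated in the commutative square immediately preceding the definition of $X\odot Y$ in Section~10). Consequently the same power of $q$ governs the relation in the folded Grothendieck group, so
\[
x_s\,x_t \;=\; q^{\Lambda(M_t,M_s)-\Lambda(M_s,M_t)}\,x_t\,x_s,
\]
which is the claimed $q$-commutation with $\la_{st}=\Lambda(M_t,M_s)-\Lambda(M_s,M_t)$.

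The only potential subtlety is ensuring that the $R$-matrix calculation, carried out for the underlying KLR modules $M_s$ and $M_t$, indeed descends to the required identity in $\kk^*$ (which is where the elements $x_s$ actually live). This is handled by observing that by Theorem~\ref{thm:folding} the product on $\kk^*$ is induced by the induction product, and that the equivariant structure $\sodot \sigma_i$ on each $M_s$ is carried through induction as $\sigma_s\circ\sigma_t$, with no further grading shifts introduced; the $R$-matrix being $a$-equivariant then guarantees that the degree shift computed in the unfolded setting is exactly the shift appearing in the folded setting. Thus no new combinatorics enters, and skew-symmetry of the matrix $\La=(\la_{st})$ is immediate from $\Lambda(X,Y)$ being symmetric in the combination $\Lambda(M_t,M_s)-\Lambda(M_s,M_t)$ as a function of $(s,t)$.
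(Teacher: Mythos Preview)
Your argument is correct and follows the same underlying idea as the paper's proof: both products $x_sx_t$ and $x_tx_s$ are classes of simple objects (via \cite[Proposition~3.2.5]{kkkocombined}), and these simples coincide up to a grading shift.

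The paper's proof is considerably terser; it asserts ``these must be the same simple'' without further comment, whereas you make explicit the mechanism: the $R$-matrix gives the isomorphism $M_s\circ M_t\cong M_t\circ M_s$ at the unfolded level, and its $a$-equivariance $r_{a^*X,a^*Y}=a^*r_{X,Y}$ (the commutative square preceding the definition of $X\odot Y$) shows this isomorphism intertwines the equivariant structures $\sigma_s\circ\sigma_t$ and $\sigma_t\circ\sigma_s$. This is precisely the content needed to rule out a spurious root-of-unity discrepancy in the folded Grothendieck group, so your added detail is not redundant but rather fills in what the paper leaves implicit. Your explicit formula $\lambda_{st}=\Lambda(M_t,M_s)-\Lambda(M_s,M_t)$ is a small bonus.
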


\begin{proof}
By \cite[Proposition 3.2.5]{kkkocombined}, $x_sx_t$ and $x_tx_s$ are both classes of simple modules. These must be the same simple, hence they are equal up to a power of $q$.
\end{proof}

\begin{lemma}\label{16.5}
Suppose that $\J$ is a quantum monoidal seed with automorphism that admits a mutation in the direction $s$. Then the clusters $X(\J)$ and $X(\mu_s(\J))$ are related by a quantum cluster mutation as in (\ref{12.n}).
\end{lemma}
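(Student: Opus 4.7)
The plan is to apply the Grothendieck-group class map to the defining short exact sequence (\ref{eq:mutation}) for the mutation in direction $s$, and via the isomorphism $\ga^*$ of Theorem~\ref{thm:folding} translate the resulting additive identity in $\A_q(\n)\otimes A$ into the multiplicative quantum cluster exchange relation (\ref{12.n}).

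First, I would apply the class map to the upper row of (\ref{eq:mutation}) for a single $k\in s$. Using that $\ga^*$ is multiplicative and intertwines the coproducts (Theorem~\ref{thm:folding}(2)--(3)), this yields the identity
\[
q^{\tilde{\Lambda}(M_k,M_k')}[M_k][M_k'] \;=\; q\,\bigl[\sodot_{i\to k} M_i\bigr] \;+\; \bigl[\sodot_{k\to i} M_i\bigr]
\]
in $\A_q(\n)\otimes A$.

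Next I would promote this one-vertex relation to one involving the orbit-level cluster variables $x_s$ and $x_s'$. Since no two elements of a single $a$-orbit are joined by an arrow of $Q$ (Definition~\ref{def:qmswa}(2)), the factors indexed by different $k\in s$ $q$-commute in $\A_q(\n)$, so the product of the one-vertex relations across $k\in s$ rearranges into an identity of the form
\[
x_s\,x_s' \;=\; q^A\,\bigl[\sodot_{i\in s}\sodot_{j\to i} M_j\bigr]\;+\;q^B\,\bigl[\sodot_{i\in s}\sodot_{i\to j} M_j\bigr]
\]
for integers $A,B$. On the right-hand side I would then regroup the indices by the $a$-orbit $t$ of the incoming (resp.\ outgoing) vertex. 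By the definition of the exchange matrix in the recipe for $X(\J)$ together with $a$-equivariance of the arrow count, each orbit $t$ contributes the factor $x_t^{b_{ts}^+}$ to the first term and $x_t^{b_{ts}^-}$ to the second, so up to $q$-powers the two sums on the right are the cluster monomials $Y^{\aa^+(s)}$ and $Y^{\aa^-(s)}$ of Section~\ref{qcas}.

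The main obstacle is matching the $q$-powers precisely. Both sides of the desired identity are bar-invariant: each cluster variable $x_t$ is bar-invariant by its definition in (\ref{xsdefn}) together with Theorem~\ref{thm:folding}(4), and the normalisation of $Y^{\aa}$ in Section~\ref{qcas} is designed precisely so that bar-invariance of the $Y_t$ implies bar-invariance of the monomial. Using the bar-product formula (\ref{barproduct}), self-duality together with the explicit definition $2\tilde{\Lambda}(M,M')=\nu\cdot\nu'+\deg r_{M,M'}$ pins down the $q$-powers on both sides uniquely up to the overall scalar $q^{e_s}$ governing the relative weight of the two summands, and that scalar is in turn fixed by the compatibility condition $\Lambda B = -2E$ which defines the skew-symmetric matrix $\Lambda$ of $X(\J)$. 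With these $q$-powers matched, the above identity is exactly the quantum cluster mutation (\ref{12.n}).
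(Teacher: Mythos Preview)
There is a genuine gap. The single-vertex short exact sequence (\ref{eq:mutation}) for a fixed $k\in s$ is a sequence of ordinary $R(\nu)$-modules, not of objects in the folded category $\C_\nu$: its vertical maps go from $a^*M_k\circ a^*M_k'$ to $M_{ak}\circ M_{ak}'$, which is a \emph{different} module when $|s|>1$. Hence the individual factors $[M_k]$, $[M_k']$, $[\sodot_{i\to k}M_i]$ have no meaning in the folded Grothendieck group $\kk^*\cong\A_q(\n)\otimes A$, and your first displayed identity does not live where you say it does. If instead you work in the unfolded Grothendieck group, then multiplying the $|s|$ additive identities $(\text{top}_k+\text{bottom}_k)$ over $k\in s$ yields $2^{|s|}$ terms, not two; nothing in your argument forces the cross-terms to vanish, and indeed in the unfolded algebra they do \emph{not} vanish.

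The paper's proof confronts exactly this point. It works directly with the equivariant object $Z=\M_s\circ\M_s'$ and observes that as an ordinary module the middle term has $2^{|s|}$ composition factors, one for each choice of top or bottom at each $k\in s$. The key observation is that only the ``all tops'' and ``all bottoms'' factors are $a$-stable; every mixed factor $L$ satisfies $a^*L\not\cong L$, so by \cite[Theorem~3.6]{folding} the corresponding equivariant composition factor of $Z$ is traceless and contributes zero to $[Z]$ in $\kk^*$. This tracelessness argument is the substance of the lemma and is entirely absent from your proposal. The $q$-power bookkeeping you outline at the end is a secondary issue and cannot repair this.
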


\begin{proof}Let $k\in s$ and let $d$ be the smallest positive integer such that $a^dk=k$. Define
\[
 \M_s=(M_k\odot M_{ak}\odot\cdots \odot M_{a^{d-1}k},\sigma_k\odot\sigma_{ak}\odot\cdots\odot\sigma_{a^{d-1}k}).
\] and $\M_s'$ and $\M_i$ for $i\neq s$ similarly.
Let $Z=\M_s\odot \M_s'$. 
By (\ref{eq:mutation}), there is a canonical map from $Z$ to $\odot_{k\to i}\M_i$ and a canonical map from $\odot_{i\to k}\M_i$ to $Z$.
Therefore
\[
 x_s x_s' = \prod_{k\to i} x_i + q^{e_s}\prod_{i\to k} x_i +c
\] 
where $c$ corresponds to the other terms in the Jordan-Holder filtration of $\M_s\odot \M_s'$.

 Let us now forget the diagram automorphism and consider the underlying KLR module only.
The composition factors of each $M_j\circ M_j'$ for each $j\in s$ are known by (\ref{eq:mutation}). 
Therefore the only simple subquotients $L$ of the underlying KLR module of $\M_s\circ\M_s'$ satisfying $a^*L\cong L$ are the submodule 
$\odot_{k\to i}\M_i$ and the quotient module
 $\odot_{i \to k}\M_i$ identified above. 
Returning to the folded situation, this implies that apart from the simple submodule and quotient explicitly identified, all of the simple composition factors of $\M_s\circ \M_s'$ are traceless by \cite[Theorem 3.6]{folding}. Hence $c=0$ as required.
\end{proof}

\begin{lemma}\label{initialminors}
  Let  $(\{M_i\}_{i\in V},Q_0,a,\{\sigma_i\}_{i\in V})$ be the initial quantum monoidal seed with automorphism arising from a reduced expression  $w=s_{j_1}\cdots s_{j_N}$ of some $w\in W$. Let $1\leq k\leq N$. To $k$ corresponds an $a$-orbit in $Q_0$. Let $s,as,\ldots,a^{d-1}s$ be an enumeration of this orbit. Let $v=s_{j_1}\ldots s_{j_k}$ Then
 \[
  [(\sodot_{i\in j_k} M_{a^is},\sodot_{i\in j_k} \sigma_{a^is})]=D(v\w_{j_k},\w_{j_k}).
 \]
\end{lemma}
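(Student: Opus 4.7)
The plan is to follow the template of Lemma \ref{squaregm}: show both sides lie in $\B^*$ and have the same weight, then identify them by comparing the underlying KLR modules via a tensor product argument at $q=1$.

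First I would simplify the modules $M_{a^l s}$. Writing $s=(t_0,i_0)$ with $i_0\in j_k$, the assumption that $Q$ has no edges between vertices in the same $a$-orbit gives $s_{i'}\w_{i_0}=\w_{i_0}$ for all $i'\in j_k\setminus\{i_0\}$ and $s_{i_0}\w_{i_0}=\w_{i_0}-\alpha_{i_0}$. A short calculation then collapses $s_{i_1}\cdots s_{i_{t_0}}\w_{i_0}$ to $v\w_{i_0}$, so $M_s=M(v\w_{i_0},\w_{i_0})$ and as $l$ varies the family $\{M_{a^l s}\}$ ranges over $\{M(v\w_i,\w_i):i\in j_k\}$. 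With this identification, the left-hand side of the lemma lies in $\B^*$ by Lemma \ref{lem:initial} applied to the truncated reduced expression $s_{j_1}\cdots s_{j_k}$, while the right-hand side lies in $\B^*$ by Theorem \ref{genminors}; both have weight $\w_{j_k}-v\w_{j_k}=\sum_{i\in j_k}(\w_i-v\w_i)$.

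The key identification is the product formula
\[
 D(v\w_{j_k},\w_{j_k})=\prod_{i\in j_k}D(v\w_i,\w_i),
\]
which I would prove in the unfolded KLR setting, exactly as in Lemma \ref{squaregm}. At $q=1$ this follows from the Cartan embedding $V(\sum_i\w_i)\hookrightarrow\bigotimes_{i\in j_k}V(\w_i)$ for the unfolded root system together with the fact that the tensor product of Shapovalov forms restricts to the Shapovalov form on the Cartan component. The right-hand side is the class of $\odot_{i\in j_k}M(v\w_i,\w_i)$ up to a grading shift; since $M_i\circ M_j$ is simple for all $i,j$ in the orbit, this is a single simple KLR module and hence lies in $q^{\Z}\B^*$. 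Comparing with the bar-invariant left-hand side as in Lemma \ref{squaregm} pins down the $q$-shift and gives the identity at general $q$.

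Finally, this identifies the underlying KLR simple of $\sodot_l(M_{a^l s},\sigma_{a^l s})$ with that of $M(v\w_{j_k},\w_{j_k})$. Both are self-dual folded simples sharing the same underlying module, so they coincide up to the sign ambiguity of self-dual structures when $n$ is even; since both classes are by definition elements of $\B^*$ rather than of $\pm\B^*$, that sign is $+1$ and equality holds. The main obstacle is the product identity above, which I would handle exactly as in Lemma \ref{squaregm}.
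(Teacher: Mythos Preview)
Your argument has a circularity at the step where you invoke Lemma~\ref{lem:initial} to conclude that the left-hand side lies in~$\B^*$. Lemma~\ref{lem:initial} only asserts the \emph{existence} of isomorphisms $\sigma_i$ with that property; the isomorphisms actually used in the initial seed $\J_{\rm in}$ are those produced by the theorem preceding Definition~\ref{def:initqms}, whose proof starts from arbitrary $\tau_s$ and rescales them to satisfy the mutation compatibility equations~(\ref{eq:compatibility}). Only the leftmost vertex in each row is pinned down by Lemma~\ref{lem:initial}; the remaining $\sigma_s$ are determined recursively and there is no reason a~priori that they agree with the ones Lemma~\ref{lem:initial} would give for the truncated word. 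Whether the initial seed's $\sigma$'s yield classes in~$\B^*$ is precisely the content of Theorem~\ref{17.3}, and Theorem~\ref{17.3} \emph{uses} Lemma~\ref{initialminors}. The paper breaks this circle by proving the lemma only up to a root of unity, feeding that weakened statement into the inductive proof of Theorem~\ref{17.3}, and then removing the ambiguity a~posteriori once Theorem~\ref{17.3} is established. Your final sentence (``since both classes are by definition elements of $\B^*$'') is exactly the step that cannot be taken directly.

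A secondary issue: the product formula you propose, $D(v\w_{j_k},\w_{j_k})=\prod_{i\in j_k}D(v\w_i,\w_i)$, equates an element of the folded $\A_q(\n)$ with a product of elements of the unfolded $\A_q(\n)$, and there is no map between these algebras in which this identity makes literal sense; the Lemma~\ref{squaregm} argument you invoke was entirely within a single root system. What is true is that the \emph{underlying KLR module} of the left-hand side of the lemma has class $\prod_{i\in j_k}D(v\w_i,\w_i)$ in the unfolded Grothendieck group, and the paper exploits this by characterising $D(v\w_{j_k},\w_{j_k})\in\B^*$ via vanishing of certain restrictions $r_{\nu-2j_k,2j_k}$ and $r_{\nu-l,l}$, which can then be checked at the unfolded level since restriction commutes with forgetting the equivariant structure. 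This gives the identity up to a root of unity without needing to compare Shapovalov forms across root systems.
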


\begin{proof}
We will first prove this identity holds up to a factor of a root of unity, then eliminate the root of unity ambiguity in the proof of Theorem \ref{17.3} below.

The module $\odot_{i\in j_k} M_{a^is}$ is simple. Therefore the left hand side of our desired identity lies in $\B^\ast$ up to a root of unity.

The module $V(\w_{j_k})$ is the quotient of $U_q(\mathfrak{n})$ by the left ideal generated by $\theta_{j_k}^2$ and $\theta_l$ for $l\neq j_k$. By Theorem \ref{basisofvla} and the fact that the $v\w_{j_k}$-weight space of $V(\w_{j_k})$ is one-dimensional, the generalised minor $D(v\w_{j_k},\w_{j_k})\in \B^\ast$ is characterised up to scalar by the fact that $r_{\nu-2j_k,2j_k}D(v\w_{j_k},\w_{j_k})=0$ and $r_{\nu-l,l}=0$ for $l\neq j_k$, where $\nu=\w_{j_k}-v\w_{j_k}$.

Therefore to complete the proof up to a root of unity, it suffices to show that
\[
\Res_{\nu-2j_k,2j_k}(\sodot_{i\in j_k} M_{a^is},\sodot_{i\in j_k}\sigma_{a^is})=0\ \ \mbox{and}\ \Res_{\nu-l,l}(\sodot_{i\in j_k} M_{a^is},\sodot_{i\in j_k}\sigma_{a^is})=0\ \mbox{for}\  l\neq j_k.\]

These restrictions can be computed at the unfolded level. It is already known that $[\odot_{i\in j_k} M_{a^is}]= \prod_{i\in j_k} D(v\w_i,\w_i)$, which is enough information to make this conclusion.

This completes the proof up to a root of unity, which is enough for its application in the proof of Theorem \ref{17.3} below. The conclusion of Theorem \ref{17.3} below implies that the root of unity ambiguity can be removed, completing the proof.
\end{proof}
%
%

%

\begin{theorem}\label{17.3}
 Let $(\{M_i\}_{i\in V},Q_0,a,\{\sigma_i\}_{i\in V})$ be the initial seed as above. Then 
 \[
  [(M_s\odot M_{as}\odot\cdots\odot M_{a^{d-1}s}, \sigma_s \odot \sigma_{as}\odot\cdots\odot \sigma_{a^{d-1}s})]\in \B^*
 \]
 and
 \[
  [(M'_s\odot M'_{as}\odot\cdots\odot M'_{a^{d-1}s}, \sigma'_s \odot \sigma'_{as}\odot\cdots\odot \sigma'_{a^{d-1}s})]\in\B^*,
 \] where $d$ is the minimal integer such that $a^ds=s$.
 \end{theorem}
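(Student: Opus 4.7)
The strategy is to reduce the first claim to Lemma \ref{initialminors} together with Theorem \ref{genminors}, and to reduce the second claim to the exchange short exact sequence (\ref{eq:mutation}) using the first claim. The substantive work in both cases is to eliminate a root of unity ambiguity, which Lemma \ref{initialminors} deliberately leaves unresolved (cf. its proof, which defers precisely this point to the present theorem).

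For the first claim, the proof of Lemma \ref{initialminors} already establishes
\[
 [(\sodot_{i\in j_k} M_{a^is},\sodot_{i\in j_k} \sigma_{a^is})]=D(v\w_{j_k},\w_{j_k})
\]
up to multiplication by some root of unity $\zeta$. Theorem \ref{genminors} places $D(v\w_{j_k},\w_{j_k})$ in $\B^*$, so the question reduces to showing $\zeta=1$. Bar-invariance, coming from Theorem \ref{thm:folding}(4) together with the self-duality of each $M_i$ supplied by Proposition \ref{mexists}, forces $\zeta=\bar\zeta$ and hence $\zeta\in\{\pm 1\}$. To pin down the sign I would induct on the position $k$ in the reduced expression. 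The base case handles the leftmost occurrence of each letter, where Lemma \ref{lem:initial} applied to the initial subword $s_{j_1}\cdots s_{j_k}$ directly constructs $\sigma$'s whose $\odot$-product has class in $\B^*$ rather than merely in $\pm\B^*$. For subsequent positions, the $\sigma$'s were fixed by the scalar equations (\ref{eq:compatibility})--(\ref{lambdasystem}) coming from commutativity of the mutation diagram (\ref{eq:mutation}); that commutativity is precisely the condition that transports the ``class in $\B^*$'' property (as opposed to ``class in $-\B^*$'') across columns of $Q_0$.

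For the second claim, $M'_k$ is constructed in \cite{kkkocombined} as a real self-dual simple module, so $[(M'_k,\sigma'_k)]$ lies in $\pm\B^*$ by the same bar-invariance argument applied to the primed seed. Taking classes in (\ref{eq:mutation}) gives
\[
q^{\tilde{\La}(M_k,M_k')}\bigl[(M_k\circ M_k',\sigma_k\circ\sigma'_k)\bigr] = q\bigl[\sodot_{i\to k}(M_i,\sigma_i)\bigr] + \bigl[\sodot_{k\to i}(M_i,\sigma_i)\bigr],
\]
whose right-hand summands lie in $\B^*$ by the first claim applied to the various $M_i$. Since the first claim also gives $[(M_k,\sigma_k)]\in\B^*$, and since multiplication in $\B^*$ has structure constants in $\N[q,q^{-1}]$ by \cite[Theorem 12.7]{folding}, the sign of $[(M'_k,\sigma'_k)]$ must be $+1$; passing to the $\odot$-product over the $a$-orbit of $k$ preserves this, yielding the desired containment. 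The principal obstacle throughout is the root-of-unity bookkeeping: bar-invariance alone cuts the ambiguity only down to $\pm 1$, and fixing the final sign requires either the inductive sign-tracking through (\ref{eq:compatibility})--(\ref{lambdasystem}) for the first claim or the positivity of $\B^*$-multiplication for the second.
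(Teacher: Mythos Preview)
Your inductive step for the first claim has a genuine gap. The assertion that ``commutativity [of \eqref{eq:mutation}] is precisely the condition that transports the `class in $\B^*$' property across columns of $Q_0$'' is not an argument: commutativity of \eqref{eq:mutation} only ensures that passing to folded classes yields a relation $q^N x\,x' = [K]+[Q]$ in $\kk^*$, and this relation by itself does not pin down the root of unity on $[K]$ (which is what encodes the claim for the next vertex $r$). Your bar-invariance reduction to $\pm 1$ is also not correctly justified, since the bar involution on $\kk^*$ is $A$-linear; the actual constraint on roots of unity comes from $A\subset\Z[\zeta_n+\zeta_n^{-1}]\subset\R$. The paper supplies the missing idea by specialising to $q=1$ and bringing in an external input: the ordinary cluster-algebra exchange relation in $\Cx[N(w)]$ gives $x\mid a+b$, where $a,b$ are the relevant products of generalised minors, while the folded relation gives $x\mid \zeta'\zeta^{-1}a+\zeta'b$; if $\zeta\neq 1$ these two divisibilities are independent and force $x\mid a$ and $x\mid b$ separately, contradicting the primality of cluster variables established in \cite{clusterprime}. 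In fact the positivity you invoke for the second claim would also close this gap---with $x\in\B^*$ and $[Q]\in q^\Z\B^*$ from the inductive hypothesis, matching the $\N[q,q^{-1}]$-coefficients of $xf$ (where $x'=\zeta'f$, $f\in\B^*$) against $[K]+[Q]$ forces first $\zeta'=1$ and then $\zeta=1$---but you do not make that argument for the first claim.

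Your second-claim argument is in spirit the same as the paper's use of Lemma~\ref{product}, but as written it is at the wrong level: $(M_k,\sigma_k)$ is not an object of $\C$ unless $ak=k$, since $\sigma_k$ maps $a^*M_k$ to $M_{ak}$ rather than to $M_k$, so the displayed equation in your proposal does not live in the folded Grothendieck group. You must first take the $\odot$-product over the full $a$-orbit (the paper does this by taking the circle product of $d$ copies of \eqref{eq:mutation} and noting that all intermediate subquotients are traceless) and only then pass to classes; the ``passing to the orbit at the end'' move does not make sense.
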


\begin{proof}
Let $r\in V$. If it exists, let $s$ be the vertex immediately to the left of $r$ in the same row of $Q_0$. We will assume as our inductive assumption that the classes of the objects
 \[
  (M_t\odot M_{at}\odot\cdots\odot M_{a^{d-1}t}, \sigma_t \odot \sigma_{at}\odot\cdots\odot \sigma_{a^{d-1}t})
 \]
lie in $\B^*$ for all $t\in V$ with smaller first coordinate than $r$. From this assumption, we will conclude that
 \begin{equation}\label{indhyp}
  [(M_s\odot M_{ar}\odot\cdots\odot M_{a^{d-1}r}, \sigma_r \odot \sigma_{ar}\odot\cdots\odot \sigma_{a^{d-1}r})]\in \B^*
 \end{equation}
 and 
 \begin{equation}\label{indhypprime}
  [(M'_s\odot M'_{as}\odot\cdots\odot M'_{a^{d-1}s}, \sigma'_s \odot \sigma'_{as}\odot\cdots\odot \sigma'_{a^{d-1}s})]\in \B^*
 \end{equation}
which suffices to prove our theorem.

If $r$ is the first vertex in its row, then we are done by Lemma \ref{lem:initial}. Suppose this is not the case and let $s$ be the vertex immediately to the left of $r$ in its row. Let $d$ be the smallest positive integer such that $a^dr=r$. Taking the circle product of $d$ copies of (\ref{eq:mutation}), we see that
\[
K:=q^d \left(\sodot_{t\to s} \sodot_{i=0}^{d-1}M_{a^{i}t},\sodot_{t \to s} \sodot_{i=0}^{d-1} \sigma_{a^it}\right)
\]
is the socle of 
\[
 Z:=((q^{d\tilde{\Lambda}(M_s,M_s')}M_s\odot\cdots\odot M_{a^{d-1}s}) \circ (M_s'\odot \cdots\odot M'_{a^{d-1}s}),(\sigma_s\odot\cdots\odot \sigma_{a^{d-1}s})\circ (\sigma'_s\odot\cdots\odot \sigma'_{a^{d-1}s}))
\]
and 
\[
Q:=(\sodot_{s\to t}\sodot_{i=0}^{d-1}M_{a^it},\sodot_{s\to t} \sodot_{i=0}^{d-1} \sigma_{a^it}) 
\]
is the head of $Z$.

Since each $M_k\circ M_k'$ is of length two, all simple subquotients of $Z$ apart from $K$ and $Q$ are traceless, so $[Z]=[K]+[Q]$ in the Grothendieck group.

By inductive hypothesis, we know $[Q]\in q^\Z\B^\ast$. There exists some root of unity $\zeta$ such that $\zeta [K]\in q^\Z \B^\ast$ and $\zeta=1$ if and only if (\ref{indhyp}) holds. The class of $Z$ is, up to a power of $q$, the product of $x=[(M_s\odot M_{as}\odot\cdots\odot M_{a^{d-1}s}, \sigma_s \odot \sigma_{as}\odot\cdots\odot \sigma_{a^{d-1}s})]$ and $x'=([M'_s\odot M'_{as}\odot\cdots\odot M'_{a^{d-1}s}, \sigma'_s \odot \sigma'_{as}\odot\cdots\odot \sigma'_{a^{d-1}s})]$. We know by inductive hypothesis that $x\in \B^\ast$. We know that $\zeta'x'\in\B^\ast$ for some root of unity $\zeta'$, which is 1 if and only if (\ref{indhypprime}) holds.

We now specialise to $q=1$. We get an equation of the form
\[
x (\zeta'x')=\zeta'\zeta\inv a+\zeta' b
\]
where and $a$ and $b$ are, by Lemma \ref{initialminors}, products of generalised minors. By \cite{kmcluster}, these generalised minors are cluster variables in an initial cluster of a cluster algebra structure on $\Cx[N(w)]$, so $x$ divides $a+b$.

In \cite[Theorem 1.3]{clusterprime}, it is shown that cluster variables are irreducible, which implies they are prime in our case as $\Cx[N(w)]$, being a polynomial ring, is a unique factorisation domain. If $\zeta\neq 1$, then we can use the two divisibility results to show that $x$ divides both $a$ and $b$ in $\mathbb{C}[N(w)]$, contradicting this primality result.

Therefore $\zeta=1$. Lemma \ref{product} now implies that $\zeta'=1$ which completes the proof.
\end{proof}

\begin{corollary}\label{initialcomp}
 The initial quantum monoidal seed $ \J_{\rm in}=(\{M_i\}_{i\in V},Q_0,a,\{\sigma_i\}_{i\in V})$ is compatible with $\B^*$.
\end{corollary}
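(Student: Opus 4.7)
The plan is to observe that this corollary is an essentially immediate consequence of Theorem \ref{17.3}. By definition, a quantum monoidal seed with automorphism $\J=(\{M_i\}_{i\in V},Q,a,\{\sigma_i\}_{i\in V})$ is compatible with $\B^*$ precisely when
\[
 [(M_i\odot M_{ai}\odot\cdots \odot M_{a^{d-1}i},\sigma_i\odot\sigma_{ai}\odot\cdots\odot \sigma_{a^{d-1}i})]\in \B^*
\]
holds for every $i\in V$, where $d$ is the minimal positive integer with $a^d i=i$. Fixing an arbitrary $i\in V$ and letting $s=i$ (so that the $a$-orbit through $i$ has size $d$), this is literally the first of the two displayed statements proved in Theorem \ref{17.3} for the initial seed $\J_{\rm in}$.

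Consequently the proof consists of a single sentence: apply the first conclusion of Theorem \ref{17.3} to each vertex $i\in V$. No further argument is needed. (The second conclusion of Theorem \ref{17.3}, concerning the mutated classes $[M'_s\odot\cdots\odot M'_{a^{d-1}s}]$, is not required for compatibility of the initial seed itself; it is part of the same inductive package because the two statements were proved together in order to carry the induction on the position of $r$ in $Q_0$ through the exchange sequence.)

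Since the work has already been done in the preceding theorem, there is no obstacle to speak of here; the only point worth articulating is the bookkeeping that the quantifier in the definition of ``compatible with $\B^*$'' ranges over all vertices $i\in V$ rather than over all $a$-orbits, but of course the circle-odot product depends only on the orbit, so the two formulations coincide.
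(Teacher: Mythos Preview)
Your proposal is correct and matches the paper's approach exactly: the corollary is stated without proof in the paper precisely because the first displayed conclusion of Theorem \ref{17.3} is, word for word, the defining condition for compatibility with $\B^*$. Your observation that the second conclusion is not needed here (but is part of the inductive package) is also accurate.
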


%
%
%
%
%
%
%
%

\begin{theorem}\label{reachable}
 All reachable quantum monoidal seeds with automorphism are compatible with $\B^*$.
\end{theorem}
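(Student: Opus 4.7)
The plan is to proceed by induction on the length of a mutation sequence from $\J_{\rm in}$, with the base case being Corollary \ref{initialcomp}. For the inductive step, let $\J = (\{M_i\}_{i\in V}, Q, a, \{\sigma_i\}_{i\in V})$ be a reachable seed compatible with $\B^*$ and let $s$ be an $a$-orbit in $V_{\rm ex}$. The only thing to verify is that $[\M_s'] \in \B^*$, where $\M_s' = (\sodot_{i\in s} M_i', \sodot_{i\in s}\sigma_i')$ is the new orbit product, since the other orbit classes are unchanged. By \cite[Proposition 3.2.5]{kkkocombined}, $\M_s'$ is self-dual simple in $\C$, so its class equals a root of unity times an element of $\B^*$; since $A$ is a real ring, this root of unity must be $\pm 1$. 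Writing $[\M_s'] = \e x^*$ with $x^* \in \B^*$ and $\e \in \{\pm 1\}$, the task reduces to showing $\e = 1$.

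The engine is the quantum cluster exchange relation from Lemma \ref{16.5}: $x_s x_s' = Y^{\aa^+(s)} + q^{e_s} Y^{\aa^-(s)}$, where $Y^{\aa^\pm(s)}$ are the bar-invariant normalized products of cluster variables of $\J$. I first establish an auxiliary observation: any product of cluster variables in a $\B^*$-compatible monoidal seed lies in $q^\Z \cdot \B^*$ with no residual sign. This is proved by short induction on the number of factors. For two factors $u^*, v^* \in \B^*$ whose underlying modules have a simple circle product, $u^* v^*$ equals $\pm q^{?}\cdot w^*$ for some $w^* \in \B^*$; Lemma \ref{product} applied to this single-term expansion in $\B^*$ forces the sign to be $+$. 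Iteration handles more factors. Combined with the bar-invariance of $Y^{\aa^\pm(s)}$, this yields $Y^{\aa^+(s)}, Y^{\aa^-(s)} \in \B^*$; call them $y^*$ and $z^*$. Since $\aa^+(s)$ and $\aa^-(s)$ have disjoint supports ($Q$ has no $2$-cycles), $y^*$ and $z^*$ lie in distinct graded components of $\A_q(\n)$, so $y^* \neq z^*$.

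Substituting $x_s' = \e x^*$ and using $\e\inv = \e$, the exchange relation rearranges to
\[
x_s x^* = \e y^* + \e q^{e_s} z^*,
\]
which is the complete $\B^*$-expansion of the product $x_s x^*$ of two basis elements. By Lemma \ref{product}, at least one coefficient in this expansion is a pure power of $q$; as both coefficients have the form $\pm q^{?}$, we must have $\e = 1$, completing the induction. The main difficulty here is the sign-tracking: one needs to prevent the $\pm 1$ ambiguity from propagating uncontrollably through repeated circle products, and Lemma \ref{product} is the essential tool, pinning down a positive power of $q$ at each application and thereby ruling out the minus sign.
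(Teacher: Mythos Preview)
Your proof follows essentially the same approach as the paper's: induction on the length of the mutation sequence, base case from Corollary \ref{initialcomp}, then the exchange relation from Lemma \ref{16.5} combined with Lemma \ref{product} to force the root of unity to be $1$. Your auxiliary observation (that products of cluster variables in a compatible seed lie in $q^\Z \B^*$) makes explicit a step the paper's proof leaves to the phrase ``by the inductive hypothesis''; the paper effectively invokes the same mechanism later in the proof of Theorem \ref{main}.

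There is one clear error. You assert that $y^*$ and $z^*$ lie in distinct graded components of $\A_q(\n)$ because $\aa^+(s)$ and $\aa^-(s)$ have disjoint support. This is false: both $Y^{\aa^+(s)}$ and $q^{e_s} Y^{\aa^-(s)}$ occur as summands in the homogeneous element $x_s x_s'$, so they have the \emph{same} weight. Fortunately the conclusion $y^*\neq z^*$ is not needed as a separate input. If $y^* = z^*$, the $\B^*$-expansion of $x_s x^*$ would consist of a single term with coefficient $\e(1 + q^{e_s})$, and since $e_s > 0$ this is never a power of $q$, contradicting Lemma \ref{product}. So the argument survives, but the justification you gave for that step must be replaced (or simply dropped).

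A minor remark: the reduction of the ambiguity to $\e\in\{\pm 1\}$ via ``$A$ is a real ring'' is not used anywhere, since Lemma \ref{product} eliminates an arbitrary root of unity. You can delete the sentence invoking $\e^{-1}=\e$ without loss.
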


\begin{proof}
For an initial seed, this is Corollary \ref{initialcomp}. 
We now proceed by induction. So suppose that $\J=(\{M_i\}_{i\in V},Q,a,\{\sigma_i\}_{i\in V})$ is a quantum monoidal seed with automorphism compatible with $\B^\ast$ and $s$ be an $a$-orbit in $V_{\ex}$. We will show that $\mu_s(\J)$ is also compatible with $\B^\ast$.

Let $i\in s$ and $Y'_s=
[M'_i\odot M'_{ai}\odot\cdots \odot M_{a^{d-1}i}',\sigma'_i\odot\sigma'_{ai}\odot\cdots\odot \sigma_{a^{d-1}i}']$.
We have to show that $Y_s'\in \B^\ast$.
 By Lemma \ref{16.5}, we have $Y_sY_s'=x+y$ where $Y_s,x,y\in q^\Z\B^\ast$ by the inductive hypothesis. Furthermore $Y_s'$ is a root of unity times an element of $\B^\ast$, since it is the class of a self-dual simple object. Lemma \ref{product} says that a product of two elements of $\B^*$ has at least one coefficient a power of $q$ when expanded in the basis $\B^*$. Therefore the root of unity must be one, completing the proof.
\end{proof}

%
%
%

At this point in time, our cluster categorification depends on the choice of a reduced expression for $w$. This is not the case because of the following theorem.

\begin{theorem}\label{independence}
 The initial clusters for each choice of reduced decomposition are all reachable from each other.
\end{theorem}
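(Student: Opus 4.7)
The plan is to mimic the strategy from the unfolded symmetric setting of Kang--Kashiwara--Kim--Oh, with additional bookkeeping for $a$-equivariance. By the Matsumoto--Tits theorem, any two reduced expressions of $w\in W$ are related by a finite sequence of elementary braid moves, each being either a commutation $s_j s_k=s_k s_j$ (when $j\cdot k=0$) or a longer braid relation of length $m_{jk}$. By induction on the number of such moves, it suffices to treat two reduced expressions differing by a single elementary move.

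For a commutation move $s_j s_k=s_k s_j$ in $W$, the hypothesis that $Q$ has no edges within any $a$-orbit, together with $j\cdot k=0$, ensures that every simple reflection indexed by $j$ commutes with every simple reflection indexed by $k$ inside $W'$. Hence the expansions of the two reduced expressions to $W'$ differ only by commutations of pairwise-commuting simple reflections. Such moves leave the quiver $Q_0$ of \S\ref{subsec} unchanged up to an $a$-equivariant relabeling of vertices, and leave the modules $M_v$ invariant (both $M_v$ and the isomorphisms $\sigma_v$ depend only on $v\w_{i_t}$ and $\w_{i_t}$, which are unchanged). Thus the two initial seeds are canonically isomorphic, and no mutations are required.

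For a braid relation $\underbrace{s_j s_k s_j\cdots}_{m_{jk}}=\underbrace{s_k s_j s_k\cdots}_{m_{jk}}$ in $W$, expansion to $W'$ produces two reduced expressions that are connected by an $a$-equivariant pattern of braid moves in $W'$. The corresponding unfolded statement is established in \cite{kkkocombined}, which exhibits an explicit sequence of quiver mutations transforming one unfolded initial seed into the other. The core step is to verify that this explicit mutation sequence is $a$-equivariant: at each stage the vertices being mutated form a union of $a$-orbits, and the intermediate quivers admit compatible $a$-actions. Granting this, each $a$-orbit of unfolded mutations assembles into a single mutation in the folded setting by the theory of \S 15, and Theorem \ref{reachable} guarantees that every intermediate folded seed is compatible with $\B^*$, so the lifted mutations are actually well-defined in our setup. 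The desired chain of folded mutations is obtained by composing these.

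The main obstacle is the verification of $a$-equivariance of the mutation sequence constructed in \cite{kkkocombined}, which requires a case analysis in terms of $m_{jk}$ and the orbit sizes $|j|,|k|$. The key leverage is that the braid pattern in $W'$ attached to a single $W$-braid relation is by construction $a$-invariant (since the braid relation in $W$ is stable under $a$, which permutes elements within each orbit while fixing the orbits $j,k$ themselves as sets); the symmetry of the construction in \cite{kkkocombined}, which produces the mutation sequence solely from the combinatorics of the $W'$-expansion, then forces the sequence itself to be $a$-invariant. Once this symmetry is in place, grouping mutations into $a$-orbits and invoking Lemma \ref{16.5} completes the passage from the $W'$-level statement to the $W$-level statement, yielding the theorem.
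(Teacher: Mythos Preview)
Your plan works at the unfolded level in $W'$ and then tries to verify that an explicit mutation sequence from \cite{kkkocombined} is $a$-equivariant. The paper bypasses all of this. Its key observation is that a quantum monoidal seed with automorphism compatible with $\B^*$ and categorifying a given folded cluster is unique up to isomorphism: the underlying modules $M_i$ are determined as the $a$-orbit of simples whose folded class is the given cluster variable, and the $\sigma_i$ are fixed up to roots of unity by simplicity, with the residual ambiguity killed by the condition that the folded class land in $\B^*$ rather than a nontrivial root of unity times $\B^*$. Given this uniqueness and the fact (Theorem~\ref{reachable}) that every reachable seed is compatible with $\B^*$, the categorified statement reduces immediately to its decategorified shadow: that the initial seeds of the cluster structure on $\A_q(\n(w))$ for the \emph{folded} datum $(J,\cdot)$, attached to different reduced expressions of $w$ in $W$, are mutation-equivalent. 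This last is a standard fact about cluster structures on unipotent cells for symmetrisable Kac--Moody groups; no passage to $W'$ is needed.

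Your route has a genuine gap as written. You rely on \cite{kkkocombined} ``exhibiting an explicit sequence of quiver mutations'' between unfolded initial seeds, but that paper does not construct such a sequence; it too quotes the decategorified cluster-algebra fact. Even if one wrote down an explicit unfolded sequence, you would still need to check at every intermediate stage that the quiver carries an $a$-action with no arrows inside an $a$-orbit (condition (2) of Definition~\ref{def:qmswa}), since otherwise the folded mutation is undefined. You flag this as requiring a case analysis over $m_{jk}$ and orbit sizes but do not carry it out, and the sentence ``the symmetry of the construction \ldots\ forces the sequence itself to be $a$-invariant'' is not an argument. The paper's uniqueness-plus-decategorification trick is precisely what lets one avoid this case analysis.
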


\begin{proof}
Note that if it exists, there is a unique up to isomorphism quantum monoidal seed with automorphism compatible with $\B^*$ categorifying any particular cluster. 
Therefore this theorem follows from its decategorified version, i.e. that any two initial clusters coming from different reduced decompositions are reachable from each other in the cluster algebra. This is a standard fact about the cluster algebra structure on $\mathbb{C}[N(w)]$ - if two reduced decompositions are related by a braid relation of the form $s_is_js_i=s_js_is_j$, then the corresponding initial clusters are a single cluster mutation apart from each other.
\end{proof}

\section{Conclusion}

The following theorem is \cite[Theorem 10.1]{my}, generalised in \cite{gy2}. The proof of Goodearl and Yakimov is purely non-commutative ring theoretic.

\begin{theorem}
The $\Z[q,q\inv]$-algebra
 $A_q(\n (w))$ has the structure of a quantum cluster algebra. An initial cluster is given by the collection of generalised minors in (\ref{eq:defyt}), while the exchange matrix for the initial cluster is given by the construction in \S \ref{sec:initseed}.
\end{theorem}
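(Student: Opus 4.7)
The proof combines the categorical and combinatorial structures built up in the preceding sections. Starting from the initial quantum monoidal seed with automorphism $\J_{\rm in}$ (Definition \ref{def:initqms}), which admits arbitrary iterated mutations in all directions by the main theorem of the Cluster mutation section, I apply the decategorification procedure introduced at the start of the Decategorification section to produce a triple $X(\J_{\rm in}) = (\{x_s\}, B, \La)$ inside $\A_q(\n(w))$. By (\ref{xsdefn}) together with Lemma \ref{initialminors}, the cluster variables $x_s$ are exactly the generalised minors $Y_t$ of (\ref{eq:defyt}); the matrix $B$ is read off from the quiver $Q_0$ of \S \ref{subsec}; and $\La$ is determined by the $q$-commutation relations among the $Y_t$'s via Lemma \ref{lem:qcommute}.

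To promote $X(\J_{\rm in})$ to a bona fide quantum seed, I need to check compatibility of $(\La,B)$ and injectivity of the induced map $\T(\La) \to \K$. Compatibility $\La B = -2E$ is a finite combinatorial identity relating the $q$-commutation exponents of consecutive generalised minors to the adjacency structure of $Q_0$; it is verifiable using Lemma \ref{restrictgm} and Lemma \ref{squaregm}, and is the quantum lift of the classical identity for $\Cx[N(w)]$. Injectivity follows from linear independence of monomials in the $Y_t$'s, which is forced by their distinct $P^+$-weights. Lemma \ref{16.5} then translates categorical mutation into quantum cluster mutation (\ref{12.n}), so iterating mutations of $\J_{\rm in}$ produces a quantum cluster algebra $\A(\J_{\rm in}) \subseteq \A_q(\n(w))$ whose cluster monomials all lie in $\B^*$ by Theorem \ref{reachable}.

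The main remaining step -- which I expect to be the most delicate -- is the reverse inclusion $\A_q(\n(w)) \subseteq \A(\J_{\rm in})$. Since $\A_q(\n(w))$ is generated as an algebra by the dual PBW vectors $E_{\b_k}^*$, it suffices to express each of these as a Laurent monomial in the initial cluster. By Theorem \ref{cuspgm}, $E_{\b_k}^* = D(s_{i_1}\cdots s_{i_k}\w_{i_k}, s_{i_1}\cdots s_{i_{k-1}}\w_{i_k})$. If $t_{j-1} < t_j = k$ denote two consecutive positions at which the index $i_k$ appears in the reduced word, then applying Lemma \ref{restrictgm} to the chain of three relevant $W\w_{i_k}$-weights produces the identity $E_{\b_k}^* = q^N Y_{t_j} Y_{t_{j-1}}\inv$ (with the convention $Y_{t_0} := 1$ if $j=1$) in the skew-field $\K$, for an explicit integer $N$ determined by the weights. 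This furnishes the required generation and yields $\A_q(\n(w)) = \A(\J_{\rm in})$. Finally, independence from the choice of reduced expression is immediate from Theorem \ref{independence}.
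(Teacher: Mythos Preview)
The paper does not prove this theorem at all: it attributes the statement to Goodearl and Yakimov \cite{my,gy2}, remarking that their proof is purely ring-theoretic, and merely records that the categorical constructions of the paper give a categorification of it. So your proposal is attempting something the paper deliberately outsources.

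Your argument has a genuine gap in the reverse-inclusion step. First, the identity $E_{\b_k}^* = q^N Y_{t_j} Y_{t_{j-1}}^{-1}$ does not follow from Lemma~\ref{restrictgm}. That lemma is a statement about the coproduct (restriction); translated via adjunction and Theorem~\ref{4.28}, it says that $M_{t_j}$ is the \emph{head} of $L(\b_k)\circ M_{t_{j-1}}$, not that the product $E_{\b_k}^*\cdot Y_{t_{j-1}}$ equals $q^N Y_{t_j}$. In general $L(\b_k)\circ M_{t_{j-1}}$ is not simple, so the claimed identity is false. Second, even granting such an identity, expressing $E_{\b_k}^*$ as a Laurent monomial in the initial cluster would only place it in the quantum torus $\T(\La)$, not in the cluster algebra $\A(\J_{\rm in})$: by definition the cluster algebra is generated by cluster variables from all reachable seeds, and inverses of mutable initial variables are not available. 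The standard route to the reverse inclusion (as in \cite{gls,my,gy2}) is to exhibit each $E_{\b_k}^*$ as an actual cluster variable in some reachable seed via an explicit mutation sequence, which is considerably more work than a single Laurent identity and is precisely what the paper cites rather than reproves. Your verification of the compatibility $\La B=-2E$ is also only a gesture; Lemmas~\ref{restrictgm} and~\ref{squaregm} do not directly yield the required commutation computation in the folded setting.
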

We have categorified this theorem with the results of this paper.

Now recall that $\B^\ast$ denotes the dual $p$-canonical basis. As a corollary of our categorification results, we obtain:

\begin{theorem}\label{main}
Every cluster monomial in $\A_q(\n(w))$ lies in $\B^\ast$.
\end{theorem}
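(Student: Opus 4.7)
The plan is to reduce the statement to the compatibility of reachable quantum monoidal seeds with $\B^\ast$ already established in Theorem \ref{reachable}, together with the additional observation that a cluster monomial categorifies to a \emph{single} self-dual simple object rather than merely a product of such.

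First I would invoke Theorem \ref{independence} to reduce to the case where the cluster containing a given cluster monomial is reachable from the fixed initial seed $\J_{\rm in}$ of Definition \ref{def:initqms}. By Theorem \ref{reachable}, the corresponding quantum monoidal seed with automorphism $\J=(\{M_i\}_{i\in V},Q,a,\{\sigma_i\}_{i\in V})$ is then compatible with $\B^\ast$, so each cluster variable
\[
x_s=\left[\left(\sodot_{i\in s}M_i,\;\sodot_{i\in s}\sigma_i\right)\right]
\]
attached to an $a$-orbit $s$ on $V$ lies in $\B^\ast$.

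Next I would show that an arbitrary cluster monomial $Y^{\bf a}=q^N\prod_s x_s^{a_s}$ is itself the class of a single self-dual simple object of $\L$, namely
\[
\M^{\bf a}:=\left(\sodot_s\left(\sodot_{i\in s}M_i\right)^{\odot a_s},\;\sodot_s\left(\sodot_{i\in s}\sigma_i\right)^{\odot a_s}\right).
\]
The key input is the remark following Definition \ref{def:qmswa}: because $M_i\circ M_j$ is simple for every pair of vertices $i,j$ of $\J$, every iterated circle product of the $M_i$'s with multiplicities is simple, and consequently the $\odot$-products displayed above are well-defined self-dual simple objects of $\L$. The class of $\M^{\bf a}$ therefore belongs to $\B^\ast$ by the construction of this basis in Theorem \ref{thm:folding}.

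The remaining technical point is to verify that the $q$-shift $q^N$ coming from the self-dual normalisation of $Y^{\bf a}$ in Section \ref{qcas} matches the $q$-shifts built into the inductive definition of $\odot$. Since both sides are uniquely determined by self-duality together with the $q$-commutation relations $Y_s Y_t=q^{\la_{st}}Y_tY_s$ of Lemma \ref{lem:qcommute}, a direct bookkeeping with the bar-product formula (\ref{barproduct}) pins them down. The step I expect to require most care is ruling out a possible root-of-unity ambiguity in $[\M^{\bf a}]$ when $n$ is even; this should yield to the same Lemma \ref{product}-based argument used at the end of the proof of Theorem \ref{17.3}, exploiting that $\prod_s x_s^{a_s}$ is a product of elements of $\B^\ast$ and hence must have at least one expansion coefficient that is a pure power of $q$.
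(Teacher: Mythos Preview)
Your approach is essentially the same as the paper's. The paper's proof is three lines: cluster variables lie in $\B^\ast$ by Theorem~\ref{reachable}; monomials then lie in $\B^\ast$ up to a root of unity by \cite[Prop.~3.2.5]{kkkocombined} (which is exactly the simplicity-of-products input you cite via the remark after Definition~\ref{def:qmswa}); and Lemma~\ref{product} kills the root of unity. Your more detailed write-up hits the same three beats.

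Two minor points. First, your sentence ``The class of $\M^{\bf a}$ therefore belongs to $\B^\ast$ by the construction of this basis in Theorem~\ref{thm:folding}'' overstates things: as the paper notes explicitly after the definition of \emph{compatible with $\B^\ast$}, a self-dual simple object only gives a root of unity times an element of $\B^\ast$, not automatically an element of $\B^\ast$ itself. You do fix this in your final paragraph via Lemma~\ref{product}, so the argument is ultimately sound, but the intermediate claim should be weakened. Second, the appeal to Theorem~\ref{independence} is unnecessary: every cluster in the cluster algebra is reachable from $\J_{\rm in}$ by definition, so Theorem~\ref{reachable} already covers all seeds.
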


\begin{proof}
For the cluster variables, this is a consequence of Theorem \ref{reachable}. That the monomials in the cluster variables lie in $\B^\ast$ then follows up to a root of unity by \cite[Prop 3.2.5]{kkkocombined}. That the root of unity is 1 is Lemma \ref{product}.
\end{proof}

\def\cprime{$'$}

\end{document}